\newtheorem{thm}{Theorem}[section]
\newtheorem{lem}[thm]{Lemma}
\newtheorem{cor}[thm]{Corollary}
\newtheorem{fact}[thm]{Fact}
\theoremstyle{definition}
\newtheorem{defn}[thm]{Definition}
\newtheorem{rem}[thm]{Remark}
\newtheorem{ex}[thm]{Example}
\DeclareMathOperator{\supp}{supp} 
\DeclareMathOperator{\dist}{dist}
\newcommand{\f}{\bar{f}}
\newcommand{\di}{D_P}
\DeclareMathOperator{\m}{\mathfrak m}
\DeclareMathOperator{\Gen}{Gen}
\newcommand{\und}[1]{\underline{#1}}
\newcommand{\om}[1]{V({#1})}
\newcommand{\omT}[1]{V_T({#1})}
\DeclareMathOperator*{\Limtop}{Lim\,top}
\DeclareMathOperator*{\Lstop}{Ls\,top}
\DeclareMathOperator*{\Litop}{Li\,top}
\newcommand{\Lts}{\Lstop}
\newcommand{\Lti}{\Litop}
\renewcommand{\phi}{\varphi}
\DeclareMathOperator{\M}{\mathcal{M}}
\DeclareMathOperator{\MT}{\mathcal{M}_T}
\DeclareMathOperator{\MTe}{\M_T^e}
\newcommand{\eps}{\varepsilon}
\newcommand{\R}{\mathbb{R}}
\newcommand{\Z}{\mathbb{Z}}
\newcommand{\N}{\mathbb{N}}
\newcommand{\Dom}{\mathcal{D}}
\newcommand{\Ran}{\mathcal{R}}
\newcommand{\fbar}{\bar{f}}
\newcommand{\Fbar}{\bar{\textit{fk}}}
\newcommand{\dbar}{\bar{d}}
\newcommand{\XX}{\mathbf{X}}
\newcommand{\bP}{\mathbf{P}}
\newcommand{\code}{\und{\mathcal{P}}}
\newcommand{\cP}{\mathcal{P}}
\newcommand{\cR}{\mathcal{R}}
\title[FK pseudometric and GIKN nonhyperbolic measures]{Feldman-Katok pseudometric and the GIKN construction of royal nonhyperbolic ergodic measures}
\author{Dominik Kwietniak \and Martha \L{}\k{a}cka}
\address[D. Kwietniak]{
Faculty of Mathematics and Computer Science, Jagiellonian University in Krak\'ow, ul. \L o\-jasiewicza 6, 30-348 Krak\'ow, Poland
ORCID ID: 0000-0002-7794-2835
}
\email{dominik.kwietniak@uj.edu.pl}
\urladdr{www.im.uj.edu.pl/DominikKwietniak/}
\address[M. \L{}\k{a}cka]{
Faculty of Mathematics and Computer Science, Jagiellonian University in Krak\'ow, ul. \L o\-jasiewicza 6, 30-348 Krak\'ow, Poland, ORCID ID: 0000-0002-7511-9095}
\email{martha.ubik@uj.edu.pl}
\urladdr{www2.im.uj.edu.pl/MarthaLacka/}
\begin{document}

\begin{abstract}

We introduce Feldman-Katok convergence for 
invariant measures of a topological dynamical system. It can be seen as a counterpart to the convergence with respect to \emph{f-bar} metric $\fbar$ for finite-state stationary processes (shift-invariant measures on a symbolic space).  Feldman-Katok convergence is based on a dynamically defined Feldman-Katok pseudometric. This convergence is stronger than weak$^*$ convergence. We prove that Feldman-Katok convergence preserves ergodicity and makes the Kolmogorov-Sinai entropy lower semicontinuous, thereby preserving zero entropy.

We apply our findings to nonhyperbolic (having at least one vanishing Lyapunov exponent) \emph{royal measures} (ergodic measures constructed using the GIKN method  --- a method introduced by Gorodetski, Ilyashenko, Kleptsyn, and Nalsky in [\emph{Functional Analysis and its Applications}, \textbf{39} (2005), 21--30]). 
This construction scheme has been widely adapted to produce nonhyperbolic ergodic measures for diffeomorphisms of compact manifolds. We prove that each ergodic measure obtained through the GIKN method is the Feldman-Katok limit of a sequence of periodic measures, which implies it is either a periodic measure or it is loosely Kronecker (Kakutani equivalent to an aperiodic ergodic rotation on a compact group) and has zero entropy. This classifies all these measures up to Kakutani equivalence and confirms that geometric constructions of nonhyperbolic measures via periodic approximations based on the GIKN method systematically produce zero-entropy systems.

\end{abstract}
\maketitle


\section{Introduction}
A topological dynamical system is a pair $(X,T)$, where $X$ is a compact metrisable space and $T\colon X\to X$ is a continuous map.  
The weak$^*$ topology is the usual choice for the topology on the space of invariant measures $\MT(X)$ of $(X,T)$. While this topology is compact and metrisable, it is incompatible with several dynamical properties. For many dynamical systems, the entropy function on $\MT(X)$ endowed with the  weak$^*$ topology fails to be continuous: low-entropy measures can approximate high-entropy measures arbitrarily closely, and zero-entropy measures are often dense even when non-zero entropy measures exist. Similarly,  mixing and ergodicity are not preserved under weak$^*$ convergence. For shift-invariant measures on a shift (symbolic) space $\mathscr A^\infty$ over a finite alphabet $\mathscr A$, Feldman's $\fbar$-metric provides a stronger topology that is better adapted to dynamical behaviour. An even stronger topology is given by Ornstein's $\dbar$-metric, which is complete but non-compact (even non-separable). Both metrics, $\dbar$ and $\fbar$ work only for finite-state stationary processes (shift-invariant measures on symbolic space $\mathscr A^\infty$). 

Here, we introduce \emph{Feldman-Katok convergence} for invariant measures of topological dynamical systems. It serves as a counterpart to convergence with respect to the f-bar metric $\fbar$ for not necessarily symbolic systems and measures. This convergence is based on the \emph{Feldman-Katok pseudometric} $\Fbar$ measuring the similarity between orbits of a topological dynamical system. It is inspired by the $\fbar$ pseudometric for infinite sequences of symbols, which is in turn based on edit distance. The edit distance between two strings $u$ and $v$ of length $n$ with entries coming from a finite set of symbols is  $\fbar_n(u,v)=k/n$, where $k$ is the minimum number of symbols that must be removed from each string to make the remaining parts identical. Thus, two strings (words) are close in the edit metric if you can make them match by removing only a small fraction of their symbols. For infinite sequences of symbols $x=x_0x_1x_2\ldots$ and $y=y_0y_1y_2\ldots$ we set
\[
\fbar(x,y)=\limsup_{n\to\infty}\fbar_n(x_0x_1\ldots x_{n-1},y_0y_1\ldots y_{n-1}).
\]
We extend this idea to orbits of arbitrary dynamical systems. The Feldman-Katok pseudometric $\Fbar$ captures a similar idea of approximate matching with allowed discrepancies. We may ignore some portions of the orbits while requiring the remaining points on each orbit to be matched into pairs close in the usual metric on the space. We discuss the details below. It is not clear whether $\Fbar$ corresponds to a metric on the space of $T$-invariant measures $\MT(X)$ as it happens for the pseudometric $\fbar$ on $\mathscr A^\infty$ given by \eqref{eq:fbar} that corresponds to a metric on the space of shift-invariant measures $\M_{\sigma }(\mathscr A^\infty)$ given by \eqref{eq:fbar-metric-for-measures}. Nevertheless, we may use $\Fbar$ to define a certain notion of ``convergence'' for $\MT(X)$.

Roughly speaking, we say that a sequence $(\mu_n)_{n=1}^\infty$ of $T$-invariant  measures converges in the Feldman-Katok sense if there is a sequence $(x_n)_{n=1}^\infty$ which is a Cauchy sequence with respect to $\Fbar$ pseudometric such that for each $n\in\N$ the point $x_n$ is generic for $\mu_n$. 

Our definition of $\Fbar$ and Feldman-Katok convergence draws inspiration from the way Ornstein's dbar metric $\dbar$ and the corresponding pseudometric $\dbar$ for infinite sequences of symbols were generalised to Besicovitch pseudometrics and a metric on the set of invariant measures  \cite{BCKO, KLO,Shields, Weiss}. 

An $\Fbar$-Cauchy sequence of generic points naturally determines a measure, which we call the \emph{$\Fbar$-limit} of the sequence. 
We also prove that Feldman-Katok convergence is stronger than the standard weak$^*$ convergence of measures, because the $\Fbar$-limit $\mu$ must also be a limit with respect to the weak$^*$ topology of the sequence of measures corresponding to the generic points forming $\Fbar$-Cauchy sequence (Theorem \ref{thm:weak}). 
The  $\Fbar$-limit of a sequence of ergodic points (that is, points generic for ergodic measures) is ergodic (Theorem \ref{thm:Cauchy}), thus ergodicity is closed with respect to the Feldman-Katok convergence.  We show that the Feldman-Katok convergence also preserves the loosely Kronecker property: if a sequence of invariant measures converges in the Feldman-Katok sense and each measure in the sequence is loosely Kronecker, then the measure which is an $\Fbar$-limit of a corresponding sequence of generic points is also loosely Kronecker (Theorem \ref{thm:looselyK}). Moreover, we establish that the Kolmogorov-Sinai entropy function is lower semicontinuous with respect to Feldman-Katok convergence (Theorem \ref{lsc}), which immediately implies that this convergence preserves zero entropy.

As an application, we resolve a question about the entropy of nonhyperbolic ergodic invariant measures arising from the GIKN construction proposed by Gorodetski, Ilyashenko, Kleptsyn, and Nalsky \cite{GIKN}, see also \cite{BDG,Diaz}. We call such measures \emph{royal measures}. This approach was subsequently adapted by many authors allowing them to find nonhyperbolic royal measures for systems with partially hyperbolic dynamics (see \cite{BBD, BDG, BZ, CCGWY, DGRZ, DG, KN} just to name a few). In particular, it is proved in \cite{CCGWY} that for a $\mathcal{C}^1$-generic diffeomorphism every nonhyperbolic homoclinic class carries a royal nonhyperbolic measure. Furthermore, under mild assumptions, the support of that measure is the whole class. For more on the role of the GIKN construction for finding nonhyperbolic ergodic measures, we refer the reader to Díaz's survey \cite{Diaz}. Note that another method of construction of nonhyperbolic ergodic measures with uncountable support has recently emerged \cite{BBD2, BBD3}. It uses the so-called \emph{flip-flop families} and provides a set of positive entropy supporting only nonhyperbolic measures. There are also ``mixed'' methods combining GIKN method and flip-flop families \cite{BDK,Lacka}. Again, see \cite{Diaz} for a survey and \cite{BC,DGRZ} for the latest developments.

The GIKN method is tailored to control Lyapunov exponents and ergodicity, and up to now, little was known about other properties of the resulting royal measures. The question of whether all royal measures necessarily have zero entropy has circulated in the dynamical systems community for some time (according to Lorenzo J. Díaz it was raised by Jérôme Buzzi in Orsay and the first named author learned about it from Díaz's lecture at the conference ``Global dynamics beyond uniform hyperbolicity'' in Olmué, Chile in September 2015). Although the repetitive structure inherent in the GIKN construction suggests zero entropy \cite{BBD2}, this heuristic argument has not convinced all researchers \cite{BZ}. 

Our main result establishes that royal measures always have zero entropy. The proof relies on demonstrating that the GIKN construction produces sequences of periodic points that are $\Fbar$-Cauchy (Theorem \ref{thm:GIKNmain}). Hence,  the resulting royal measures always emerge as $\Fbar$-limits of zero entropy periodic measures; even more, they are $\Fbar$-limits of measures that are ergodic and loosely Kronecker (therefore with zero entropy).  Then it follows from the properties of the Feldman-Katok convergence that royal measures necessarily inherit ergodicity (Theorem~\ref{thm:Cauchy}), zero entropy (by the lower semi-continuity of entropy, Theorem~\ref{lsc}), and the loosely Kronecker property (which also implies zero entropy) by Theorem~\ref{thm:looselyK}.

In fact, proving that royal measures are loosely Kronecker, we provide a full characterisation of the former class of measures up to Kakutani equivalence. Kakutani equivalence is a natural equivalence relation between transformations preserving an ergodic nonatomic measure that is weaker than the usual notion of isomorphism (nevertheless, the problem of deciding whether two measure preserving systems are Kakutani equivalent is in some sense equally hard as for the usual isomorphism, see \cite{GerKun}). Note that entropy is not an invariant for Kakutani equivalence, but it follows from Abramov's formula that this relation preserves the classes of zero, positive and finite, and infinite entropy transformations. Loosely Kronecker systems form the Kakutani equivalence class of any aperiodic ergodic rotation of a compact infinite group (the latter systems are known as Kronecker systems).  According to Feldman and Nadler \cite{FN}, the name loosely Kronecker systems was a suggestion of Marina Ratner. Independently, these systems were studied by Katok \cite{Katok} (partly in collaboration with Sataev \cite{KatokSataev}, who contributed to the subject as well, see~\cite{Sataev}).  
Katok called Kakutani equivalence 
\emph{monotone equivalence} and he called the loosely Kronecker systems 
\emph{standard automorphisms}. Every loosely Kronecker system has zero entropy and is loosely Bernoulli that is, it belongs to a class of measure preserving systems introduced by Feldman \cite{Feldman}. 
Recently, loosely Bernoulli (in particular, loosely Kronecker) systems have gained renewed attention. Glasner, Thouvenot, and Weiss \cite{GTW} showed that, generically, an extension of an ergodic loosely Bernoulli system is loosely Bernoulli. Gerber and Kunde \cite{GK} showed there exist smooth weakly mixing loosely Kronecker transformations whose Cartesian square is loosely Kronecker, and Trujillo \cite{Tru}  showed there exist such transformations that are mixing.

Showing that all royal measures are loosely Kronecker means that the ergodic measure-preserving system obtained from a homeomorphism and its royal invariant measure is isomorphic to a measure-preserving system arising by taking an aperiodic ergodic group rotation and the first-return transformation induced by the rotation on an appropriately chosen measurable subset of the group of positive Haar measure. This description applies to royal measures defined in \cite{BC, BBD, BDG, BZ, CCGWY, DG, GIKN, KN}.

Applications of Feldman-Katok convergence are not restricted to the study of royal measures. 
Since the  announcement of the  preprint \cite{KL-arXiv-2017} in 2017, Feldman-Katok pseudometric has been used in various contexts. 
Using Feldman-Katok pseudometric the authors of \cite{GRK} characterised uniquely ergodic dynamical systems that are loosely Kronecker. Later, Trilles \cite{Trilles} proved an analogue of this characterization for continuous flows.
Feldman-Katok pseudometric has  been also applied in studies of entropy, mean dimension, and pressure 
\cite{CaiLi-Nonlinearity-2023, XieChenYang-JDCS-2024, XieChenYang-DynamicalSystems-2025}, 
as well as it spurred investigations of dynamical pseudometrics and related notions 
\cite{CaiKwLiPourmand-JDE-2022, DKL, GaoZhang-ActaMathSin-2024}. 

\subsection*{Organisation of the paper}
Section~\ref{sec:basics} establishes the necessary preliminaries and notation, including the Feldman f-bar metric $\fbar$ for shift-invariant measures, Kakutani equivalence, and the notion of loosely Kronecker systems, which will play a central role in our classification results.
Section~\ref{sec:GIKN} presents the GIKN construction following the exposition of Bonatti, Díaz, and Gorodetski \cite{BDG}. 
We also discuss how this construction controls Lyapunov exponents to produce nonhyperbolic ergodic measures.
In Section~\ref{sec:FK}, we introduce the Feldman-Katok pseudometric $\Fbar$, extending the ideas from symbolic spaces to general compact metric spaces. We establish the basic properties of the resulting pseudometric. Section~\ref{sec:FK-convergence} develops the theory of $\Fbar$-convergence for invariant measures. We prove that $\Fbar$-Cauchy sequences of generic points determine unique invariant measures (Corollary \ref{cor:completeness}) and that $\Fbar$-convergence is stronger than weak$^*$ convergence (Theorem \ref{thm:weak}). Section \ref{sec:GIKN-FK-Cauchy} establishes that every GIKN sequence is $\Fbar$-Cauchy (Theorem \ref{thm:GIKNmain}), providing the crucial link between the geometric construction and our pseudometric framework. Section~\ref{sec:FK-ergodic} proves that $\Fbar$-limits of ergodic measures are ergodic (Theorem \ref{thm:Cauchy}) by adapting Oxtoby's criterion for ergodicity \cite{Oxtoby} to quasi-orbits. This immediately yields the ergodicity of royal measures (Corollary \ref{cor:royal-ergodic}).
While most of the results in Sections \ref{sec:FK}--\ref{sec:FK-ergodic} 
are rather easy and of technical nature, they are essential for building the Feldman-Katok pseudometric into a useful tool for studying invariant measures. This work pays dividends, as noted in the introduction, in the various subsequent applications. The main novelty comes from the ``right'' abstraction of $\fbar$ to the general case.
The main results (Theorems \ref{lsc} and \ref{thm:looselyK}) appear in Sections~\ref{sec:lsc}--\ref{sec:GIKN-LK}. 
Section~\ref{sec:lsc} demonstrates that the Kolmogorov-Sinai entropy is lower semicontinuous with respect to $\Fbar$-convergence (Theorem \ref{lsc}), implying in particular that zero entropy is preserved under $\Fbar$-limits. In Section \ref{sec:GIKN-LK}, we prove that aperiodic $\Fbar$-limits of loosely Kronecker measures are loosely Kronecker (Theorem \ref{thm:looselyK}), using Katok's criterion \cite{Katok}. Combined with our earlier results, this establishes that all royal measures are loosely Kronecker, and therefore ergodic with zero entropy and Kakutani equivalent to ergodic rotations on compact groups (Theorem \ref{thm:GIKN-improved}). This completely characterizes royal measures up to Kakutani equivalence and definitively answers the question about their entropy.

\subsection*{Acknowledgements}
First and foremost, we would like to thank Christian Bonatti, not only for many discussions related to this paper, but also
for his contagious and inspiring enthusiasm for mathematical research. 
We would like to express our equally deep gratitude to  Lorenzo J.~D\'{\i}az and Katrin Gelfert 
for their constant support, inspiration, and encouragement throughout this work. Their enthusiasm for this research motivated us to resubmit this project. We are grateful for all the many thoughtful conversations that shaped our thinking along the way and many remarks that helped us to improve this manuscript.

We would like to thank Fran\c{c}ois Ledrappier (for bringing $\fbar$-metric to our attention), Tomasz Downarowicz and Benjy Weiss (for patient and lucid answers to our multiple questions about ergodic theory), Dawid Bucki, Melih Emin Can, Philipp Kunde, Michal Kupsa, Anthony Quas, and Alexandre Trilles (for clarifications and discussions regarding $\fbar$ metric).

Special thanks to Anton Gorodetski for a very encouraging and enlightening conversation in Trieste.

The research of Dominik Kwietniak was supported by the  National Science Centre (NCN) under grant no.
DEC-2012/07/E/ST1/00185 and his stay at the Federal University of Rio de Janeiro where part of this research was conducted
is supported by CAPES/Brazil grant no. 88881.064927/2014-01. Many thanks to Maria José Pacifico who made this stay possible. 
Martha \L{}\k{a}cka acknowledges support of the National Science Centre (NCN), Poland, grant no. 2015/19/N/ST1/00872 and the doctoral scholarship no. 2017/24/T/ST1/00372. 

\section{Basic definitions and notation}\label{sec:basics} Throughout this paper $\N=\{1,2,\ldots\}$, $|A|$ is the cardinality of a set $A$, and $\chi_A$ is its characteristic function. Unless otherwise stated $i,j,k,\ell,m,n$ denote integers.

Let $\bar d(A)$ be the \emph{upper density} of a set $A\subset\N\cup\{0\}$, that is
\[\bar d(A)=\limsup\limits_{n\to\infty}\frac{|A\cap\{0,\ldots, n-1\}|}{n}.\]

The \emph{lower density} of $A\subset \Z$ is $\und{d}(A)=1-\bar{d}(\Z\setminus A)$. 
The set $A\subset\Z$ \emph{has density} $\alpha$ if $\bar{d}(A)=\und{d}(A)=\alpha$. 
Given a set $Z$ we denote by $Z^{\infty}$ the family of all $Z$-valued sequences indexed by $\N\cup\{0\}$. Typically, we write 
$\und{z}=(z_i)_{i=0}^\infty$ for elements of $Z^\infty$. By $\sigma$ we denote the shift operator acting on $Z^\infty$ as $\sigma(\und{z})=(z_{i+1})_{i=0}^\infty$.
Whenever $Z$ is a topological space, we endow $Z^\infty$ with the product topology.
\subsection{Dynamical systems (standing assumptions)} We assume that $X$ is a compact metric space, $\rho$ is a metric for $X$, and $T\colon X\to X$ is a continuous map. All results stay the same if $\rho$ is replaced by another compatible metric. 

Given $x\in X$, we distinguish between the \emph{orbit} of $x$, which is a set $\{T^n(x):n\ge 0\}\subset X$ and the \emph{trajectory} of $x$ which is a sequence
$\und{x}_T=(T^j(x))_{j=0}^\infty\in X^\infty$.

\subsection{Note on invertibility} Observe that by default we consider non-necessarily invertible transformations, but some of the results we invoke from the literature assume invertibility. It is easy to see that in all such cases, it is enough to apply the theorem we need to use the natural extension of noninvertible transformations.

\subsection{Measure-preserving systems} Most of the
standard texts on ergodic theory work with measure-preserving transformations of
standard Lebesgue spaces. The latter are measure
spaces arising as completions of probability measures on Polish metric spaces endowed with their Borel $\sigma$-algebras.
In this approach it is hard to consider different measures
on the same underlying space, since the $\sigma$-algebra depends nontrivially on the measure.
This is the primary reason we work in the Borel category. If necessary, when we work with a single invariant Borel measure $\mu$, we can take the completion of our measure space to obtain a Lebesgue space. All properties of measure-preserving systems considered here remain the same for the original system and its completion, that is, a measure-preserving system $(X,\mathscr{X},\mu,T)$ has one of these properties if and only if the completed system $(X,\tilde{\mathscr{X}}_\mu,\tilde\mu,T)$
has the property. 

\subsection{Invariant measures, generic sequences} We write $\mathscr{X}$ for the Borel $\sigma$-algebra of $X$ and $\M(X)$ for the set of all Borel probability measures on $X$. By $\MT(X)$ we denote $T$-invariant measures in $\M(X)$. We write $\MTe(X)$ for the set of ergodic invariant measures. 
The quadruple $\XX=(X,\mathscr{X},\mu,T)$ is a \emph{measure-preserving system}, which is invertible, whenever $T$ is a homeomorphism.  We give $\M(X)$ the weak$^*$ topology. Recall that $(\mu_n)_{n=1}^\infty$ converges to $\mu$ in $\M(X)$ if and only if $\int\phi\,d\mu_n\to \int \phi\,d\mu$
for every continuous $\phi\colon X\to\R$.
The weak$^*$ topology on $\M(X)$ is compact and compatible with the Prokhorov metric
\[
\di(\mu,\nu)=\inf\big\{\eps>0\,:\,\mu(B)\leq\nu(B^{\eps})+\eps\text{ for every Borel set }B\subset X\big\},
\]
where $B^\eps=\{y\in X\,:\,\dist(y,B)<\eps\}$ denotes the \emph{$\eps$-hull} of $B$.
For $x\in X$, let $\hat\delta(x)\in\mathcal M(X)$ be the Dirac measure supported on $\{x\}$. Let $\m(\und x,n)$ denote the \emph{$n$-empirical measure of $\und x=(x_j)_{j=0}^\infty\in X^{\infty}$}, that is $\m(\und x,n)=1/n(\hat\delta(x_0)+\hat\delta(x_1)+\ldots+\hat\delta(x_{n-1}))$.

Given $x\in X$ we put $\m_T(x, n)=\m(\und{x}_T,n)$. 

A measure $\mu\in \M(X)$ is \emph{generated by $\und{x}\in X^\infty$} if $\mu$ is the limit of some subsequence of $(\m(\und{x},n))_{n=1}^{\infty}$. The set of all measures generated by $\und{x}$ is denoted by $\om{\und{x}}$. We say that $\und{x}\in X^\infty$ is a \emph{generic sequence} for $\mu$, (respectively, \emph{ergodic sequence}) if $\om{\und{x}}=\{\mu\}$ for some (ergodic) 
$\mu\in\M(X)$. We write $\Gen(\mu)$  for the set of all sequences in $X^\infty$ that are generic for $\mu$.
For $z\in X$ we define $\omT{z}=\om{\und{z}_T}$, and we call $z$ a \emph{generic point} (\emph{ergodic point}) 
if $\und{z}_T$ is a generic sequence (respectively, ergodic) sequence.

Note that \emph{every} invariant measure has a generic sequence in $X^\infty$ \cite{KLO,Sigmund}, while a non-ergodic invariant measure may have no generic points. Furthermore, one can choose a generic sequence which is a \emph{quasi-orbit}. A quasi-orbit is built from long 
pieces of orbits in such a way that the set of positions at which a quasi-orbit switches from one piece of orbit to another has zero asymptotic density.
\begin{defn}
We say that $\und z=(z_n)_{n=0}^\infty\in X^\infty$ is a \emph{quasi-orbit} for $T$ if $\bar d(\{n\ge 0\,:z_{n+1}\neq T(z_n)\})=0$.
\end{defn}
It is easy to see that every measure generated by a quasi-orbit for $T$ must be $T$-invariant.

Furthermore, the GIKN construction yields a quasi-orbit generic for the invariant measure it produces. We work with that quasi-orbit to demonstrate the properties of the underlying measure.

\subsection{Symbolic systems} Let $\mathscr{A}$ be a  finite set with the discrete topology. We endow $\mathscr{A}^\infty$ with the product topology and call it the \emph{full shift} over the alphabet $\mathscr{A}$. The \emph{shift map} is the map $\sigma\colon\mathscr{A}^\infty\to \mathscr{A}^\infty$ given by 
$\sigma((x_n)_{n=0}^\infty)=(x_{n+1})_{n=0}^\infty$. The set of (ergodic) shift-invariant measures is denoted $\M_{\sigma}(\mathscr{A}^\infty)$ ($\M^e_{\sigma}(\mathscr{A}^\infty)$). 
When $\mathscr{A}=\{0,1,\ldots,k-1\}$ for some $k\in\N$ we write $\Omega_k=\{0,1,\ldots,k-1\}^\infty$. We call the elements of $\mathscr{A}^n$ \emph{words of length $n$} over $\mathscr{A}$. Let $\mathscr{A}^+=\bigcup_{n\ge 1}\mathscr{A}^n$ and $|u|$ stand for the length of $u\in\mathscr{A}^+$. Every word $u\in\mathscr{A}^+$ determines a \emph{cylinder set} $[u]\subset\mathscr{A}^\infty$ consisting of all sequences in $\mathscr{A}^\infty$, whose first $|u|$ symbols coincide with $u$. Cylinders form a clopen base for the topology of $\mathscr{A}^\infty$ and generate the Borel $\sigma$-algebra $\mathscr{B}$ of 
$\mathscr{A}^\infty$. Given two $n$-words $u=u_0u_1\ldots u_{n-1}$ and $w=w_0w_1\ldots w_{n-1}$ over $\mathscr{A}$, we define the \emph{Hamming distance} between $u$ and $w$ as
\[
\dbar_n(u,w)=\frac{1}{n}\left|\{0\le j<n:u_j\neq w_j\}\right|.
\]
The edit metric between two strings (words) of length $n$ equals $1-k/n$, where $k$ is the minimum number of symbols that have to be removed from each string so that the remaining strings are identical. In other words,
the \emph{edit distance} between $u$ and $w$ is given by
\[
\fbar_n(u,w)=1-\frac{k}{n},
\]
where $k$ is the largest among those integers $\ell$ such that for some $0\le i_1<i_2<\ldots<i_\ell<n$ and $0\le j_1<j_2<\ldots<j_\ell<n$ we have
$u_{i_s}=w_{j_s}$ for $s=1,\ldots,\ell$. For two infinite sequences ${\omega}=\omega_0\omega_1\omega_2\ldots$, ${\omega}'=\omega'_0\omega'_1\omega'_2\ldots$ in $\mathscr A^\infty$ we set
\begin{align}\label{formula}
\dbar({\omega},{\omega}')=&\limsup_{n\to\infty}\dbar_n({\omega},{\omega}')=\limsup_{n\to\infty} \dbar_n(\omega_0\omega_1\ldots \omega_{n-1},\omega'_0\omega'_1\ldots \omega'_{n-1})
 \\
 =&\dbar(\{j\ge 0:\omega_j\neq\omega'_j\}),\nonumber\\
 \label{eq:fbar}
  \fbar({\omega},{\omega}')=&\limsup_{n\to\infty}\fbar_n({\omega},{\omega}')=\limsup_{n\to\infty} \fbar_n(\omega_0\omega_1\ldots \omega_{n-1},\omega'_0\omega'_1\ldots \omega'_{n-1}),\\ 
  \label{eq:fhat}
\hat{f}({\omega},{\omega}')=&\inf\{
\eps>0:\text{ there are increasing sequences $(i_r)$, $(i_r')$ in $\N^\infty$ }    \\
&\qquad\text{of lower density at least $1-\eps$ for which $\omega_{i_r}=\omega'_{i_r'}$ for all $r\ge0$}
\}.\nonumber
\end{align}
The functions $\dbar$, $\fbar$, and $\hat{f}$ are pseudometrics on $\mathscr{A}^\infty$. Furthermore, $\fbar({\omega},{\omega}')\le \hat{f}({\omega},{\omega}')$ and $\fbar({\omega},{\omega}')\le \dbar({\omega},{\omega}')$ for $\omega,\omega'\in\mathscr{A}^\infty$ , and $\bar{f}$, $\hat{f}$ are uniformly equivalent pseudometrics on $\mathscr{A}^\infty$, see \cite{ORW}.

\subsection{Processes} We write $\bP^m(X)$ for the set of all Borel measurable partitions of $X$ into at most $m$ sets, called \emph{atoms}. For $\mathcal P\in \mathbf P^k(X)$  we write $\mathcal{P}=\{P_0,\ldots,P_{k-1}\}$ regardless of the actual number of nonempty elements in $\mathcal P$ and we agree that $P_j=\emptyset$ for $|\mathcal{P}|\le j<k$.
Let $\XX=(X,\mathscr X, \mu, T)$ be a measure-preserving system and let $\mathcal P=\{P_0,P_1,\ldots, P_{k-1}\}\in\bP^k(X)$. We identify $\mathcal P$ with a function $\mathcal P\colon X\to \{0,\ldots, k-1\}$ defined by $\mathcal P(x)=j$ for $x\in P_j$. The pair $(\XX,\mathcal P)$ is called a process, see \cite[p. 273]{Glasner}. A \emph{coding} of $\und x=(x_j)_{j=0}^{\infty}\in X^{\infty}$ is $\mathcal{P}(\und x)=(\mathcal P(x_j))_{j=0}^{\infty}\in\Omega_k$. The map $\code\colon X\to\Omega_k\text{ given by }\code(x)=\mathcal P(\und x_T)$ defines a homomorphism of $\XX$ and $(\Omega_k,\mathscr{B}, \mu_{\mathcal P}, \sigma)$, where $\mu_{\mathcal P}=\code_*(\mu)$ is the pushforward of the measure $\mu$. For $n>0$ and $\und x\in X^{\infty}$ let $\phi^n_{\mathcal P}(\und x)=\mathcal P(x_0)\mathcal P(x_1)\ldots\mathcal P(x_{n-1})\in\{0,1,\ldots, k-1\}^n$. Let $\mathcal P^n$ be the \emph{$n$-th join} of $\mathcal P$ given by
\[
\mathcal P^n=\bigvee_{j=0}^{n-1}T^{-j}(\mathcal P)=\{P_{i_0}\cap T^{-1}(P_{i_1})\cap\ldots\cap T^{-n+1}(P_{i_{n-1}}):P_{i_j}\in\mathcal P\text{ for }0\le j<n\}.
\]
Note that for $x\in X$ we may write $\phi^n_{\mathcal P}(\und{x}_T)=\mathcal P^n(x)$, because $\phi^n_{\mathcal P}(\und{x}_T)=i_0i_1\ldots i_{n-1}$ if and only if $\mathcal{P}^n(x)=P_{i_0}\cap T^{-1}(P_{i_1})\cap\ldots\cap T^{-n+1}(P_{i_{n-1}})$.
The measure-preserving system $(\Omega_k,\mathscr{B}, \mu_{\mathcal P}, \sigma)$ is called the \emph{symbolic representation} of $\XX$ with respect to the partition $\mathcal P$ and $\mu_{\mathcal P}$ is the \emph{symbolic representation measure} of $\mu$. We endow $\mathbf P^k(X)$  with the distance $d_1^\mu$ given for  $\mathcal{P},\mathcal{Q}\in\bP^k(X)$ by
\[
d^\mu_1(\mathcal{P},\mathcal{Q})=
\frac{1}{2}\sum_{j=0}^{k-1} \mu(P_j \div Q_{j})=\frac{1}{2}\sum_{j=0}^{k-1}\int_X|\chi_{P_j}-\chi_{Q_j}|\,\text{d}\mu=\mu(\{x\in X:\mathcal{P}(x)\neq\mathcal{Q}(x)\}).
\]
Note that the definition of $d^\mu_1$ takes into account the order of the partition's elements. We tacitly identify Borel partitions $\mathcal{P},\mathcal{Q}\in\bP^k(X)$ with $d^\mu_1(\mathcal{P},\mathcal{Q})=0$. With this identification $d^\mu_1$ is a complete metric for $\bP^k(X)$.

\subsection{Entropy}
For a finite measurable partition $\mathcal P$ and $\mu\in\M_T(X)$ we denote by $h(\mu, \mathcal P)$ the \emph{entropy of $\mathcal P$ with respect to $\mu$ and $T$} and by $h(\mu)$ the \emph{entropy of $\mu$ with respect to $T$}, that is $h(\mu)=\sup_{\mathcal P}h(\mu, \mathcal P)$, where $h(\mu, \mathcal P)=\inf_{n\in\N}-\sum_{P\in\mathcal P^n}\mu(P)\log\mu(P)$. The real-valued function $\mathcal P\mapsto h(\mu,\mathcal P)$ is uniformly continuous on $\bP^k(X)$ equipped with $d^\mu_1$ \cite[Lemma 15.9(5)]{Glasner}.

\subsection{Faithful coding} For $\mathcal{P}\in\bP^k(X)$ we define $\partial\mathcal{P}=\partial P_0\cup\ldots\cup\partial P_{k-1}$. A partition $\mathcal{P}\in\bP^k(X)$ with $\mu(\partial\mathcal{P})=0$ is called \emph{faithful} for $\XX$.

\begin{lem}\label{lem:generic}
If $\und{x}\in X^\infty$ is generic for $\mu\in\MT(X)$ and $\mathcal{P}\in\bP^k(X)$ is such that $\mu(\partial\mathcal{P})=0$, then $\omega=\mathcal{P}(\und x)\in\Omega_k$ is a 
generic point for the measure $\mu_{\mathcal{P}}$ on $\Omega_k$. 
\end{lem}
\begin{proof}
Note the following two properties of the boundary operator $\partial$: $\partial(Y\cap Z)\subset \partial Y\cup\partial Z$ for $Y,Z\subset X$ and $\partial T^{-1}(U)\subset T^{-1}(\partial U)$ for any $U\subset X$. 
Using this and $\mu(\partial P_j)=0$ for every $0\le j < k$ we see that
\[
\mu(\partial(\bigcap_{i=0}^{m-1}T^{-i}(P_{j_i}))=0\quad\text{for every }m\ge 1\text{ and }0\le j_0,j_1,\ldots,j_{m-1}< k.
\]
In other words, $\mu(\partial\mathcal{P}^m)=0$ for all $m\ge 1$.  Then for every $m\ge 1$ and $0\le j_0,j_1,\ldots,j_{m-1}< k$ we have
\[
\lim_{N\to\infty}\frac{1}{N}\sum_{j=0}^{N-1}\chi_A(x_j)=\mu(A),\quad \text{for }A=\bigcap_{i=0}^{m-1}T^{-i}(P_{j_i})\in\mathcal{P}^m.
\]
Note that $\omega=\mathcal{P}(\und x)$ is an orbit for $\sigma$ and observe that $\omega$ is generic for a $\sigma$-invariant measure $\mu'$ such that
\[
\mu'([j_0j_1\ldots j_{m-1}])=\mu(\bigcap_{i=0}^{m-1}T^{-i}(P_{j_i})) \quad\text{for every }m\ge 1\text{ and }0\le j_0,j_1,\ldots,j_{m-1}< k.
\]
Hence $\mu'$ and $\mu_{\mathcal{P}}$ agrees on cylinders in $\Omega_k$. This implies $\mu'=\mu_{\mathcal{P}}$.
\end{proof}

\subsection{Kakutani equivalence} Kakutani equivalence serves as a natural equivalence relation among transformations that preserve an ergodic nonatomic measure, weaker than the conventional concept of isomorphism (however, in some sense equally hard, cf. \cite{GerKun}). Although entropy does not remain invariant under Kakutani equivalence, Abramov's formula implies that this relation retains the categories of zero, positive and finite, as well as infinite entropy transformations.
Let $\XX=(X,\mathscr X, \mu, T)$ be a measure-preserving system. For a set $E\in \mathscr X$ with $\mu(E)>0$ and $x\in E$ we define the \emph{return time} $n(x)=\inf\{k>0: T^k(x)\in E\}$. This function is finite for $\mu$-almost every $x\in E$ and we define the \emph{induced transformation} $T_E\colon E\to E$ by $T_E(x)=T^{n(x)}(x)$. Measure-preserving systems $\XX=(X,\mathscr X, \mu, T)$  and $\mathbf Y=(Y,\mathscr Y, \nu, S)$ are \emph{Kakutani equivalent} (recall, that Katok calls this relation  \emph{monotone equivalence}) if there exist $E\in\mathscr X$ with $\mu(E)>0$ and $F\in\mathscr Y$ with $\nu(F)>0$ such that $T_E$ is isomorphic with $S_F$.

\subsection{Loosely Bernoulli systems}
Feldman \cite{Feldman} studied the isomorphism problem in ergodic theory. He introduced a new property (called \emph{loose Bernoulliness}) for finite partitions of a measure-preserving system $(X,B,\mu,T)$. He used it to construct important examples of  $K$-automorphisms which are loosely Bernoulli, but not Bernoulli. In particular, these measure-preserving systems have positive entropy.
The definition of loosely Bernoulli partition  follows Ornstein's definition \cite{Ornstein} of very weak Bernoulli partition with the Hamming distance of strings of symbols of length $n$ replaced by the weaker edit metric $\fbar_n$.  Feldman's idea was subsequently extended by Ornstein, Rudolph, and Weiss \cite{ORW}. It turns out that, in each of three entropy classes, the Kakutani equivalence class of loosely Bernoulli transformations is the simplest one, see \cite{ORW} for more details. Positive and finite (respectively, infinite) entropy transformations Kakutani equivalent to a Bernoulli shift coincide with loosely Bernoulli and positive and finite (respectively, infinite) entropy transformations.
Zero entropy loosely Bernoulli transformations form the Kakutani equivalence class of any ergodic rotation of a compact infinite group (Kronecker system).  According to Feldman and Nadler \cite{FN}, members of the latter equivalence class are called \emph{loosely Kronecker} following a suggestion of Marina Ratner. We adapt this terminology and provide a characterization of these systems in Section \ref{sec:losselyK}, cf. Theorem \ref{criterion}.
\subsection{Feldman's $\fbar$ and Ornstein's $\dbar$ metrics for shift-invariant measures on $\mathscr A^\infty$}\label{sec:fbar-dbar}
Let $\mu$ and $\mu'$ be ergodic shift invariant measures on $\mathscr{A}^\infty$. By $\mu_n$, respectively $\mu'_n$ we denote the restriction of $\mu$, respectively $\mu'$ to the set of all $n$-cylinders, that is, these are measures that $\mu$, respectively $\mu'$, define on $\mathscr{A}^n$ via the projections onto first $n$ coordinates. A \emph{joining} of $\mu$ and $\mu'$ is any $\sigma\times \sigma$-invariant measure on $\mathscr{A}^\infty\times \mathscr{A}^\infty$  whose marginals are $\mu$ and $\mu'$. We write $J(\mu,\mu')$ for the set of all joinings of $\mu$ and $\mu'$. Similarly,
$J_n(\mu,\mu')$ denotes the set of all measures $\lambda_n$ on $\mathscr{A}^n\times \mathscr{A}^n$ whose marginals are $\mu_n$ and $\mu'_n$.

Define
\begin{align}\label{eq:fbar-for-measures}
\bar{f}_n(\mu,\mu')=&\inf_{\lambda_n\in J_n(\mu,\mu')}\int_{\mathscr{A}^n\times \mathscr{A}^n}\bar{f}_n(u,u') \lambda_n(u,u'),\\
\label{eq:fbar-metric-for-measures}
\bar{f}(\mu,\mu')
=&\limsup_{n\to\infty}\bar{f}_n(\mu,\mu').
  \end{align}
One can prove (see \cite{ORW}) that \eqref{eq:fbar-metric-for-measures} defines a distance between measures $\mu$ and $\mu'$ on $\mathscr A^\infty$ known as the \emph{f-bar} metric. 
Ornstein's \emph{d-bar} metric $\dbar$ on $\M_\sigma(\mathscr A^\infty)$ is defined analogously with the Hamming distance $\dbar_n$ on $\mathscr A^n$ replacing the edit distance $\fbar_n$ in \eqref{eq:fbar-for-measures}, see \cite{Shields}.
\subsection{Properties of $\fbar$}
For the readers' convenience, we include here some statements extracted from \cite{ORW}. We rephrase them in a way suitable for our purposes.
The first result is a direct corollary of Propositions 2.6 and 2.7 in \cite{ORW}.
\begin{lem}\label{lem:fbar-bound}
For every $\eps>0$ there is $\delta>0$ such that if $\mu$ and $\mu'$ are shift-invariant ergodic measures on $\mathscr A^\infty$
and there are generic points $\omega$ for $\mu$ and $\omega'$ for $\mu'$
with $\fbar(\omega,\omega') < \delta$, then $\fbar( \mu, \mu' ) < \eps$.
\end{lem}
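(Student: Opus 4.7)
The plan is to build a joining of $\mu$ and $\nu$ empirically from the pair $(\omega,\omega')$ and bound the resulting $\fbar_n$-cost. For each $N\in\N$, consider the empirical measure
\[
\pi_N=\frac{1}{N}\sum_{j=0}^{N-1}\delta_{(\sigma^j\omega,\,\sigma^j\omega')}\in\M(\mathscr{A}^\infty\times\mathscr{A}^\infty),
\]
and extract a weak$^*$-convergent subsequence $\pi_{N_k}\to\pi$. Because $\omega\in\Gen(\mu)$ and $\omega'\in\Gen(\nu)$, the coordinate-projection marginals of $\pi$ are $\mu$ and $\nu$; the usual Krylov--Bogolyubov averaging forces $\pi$ to be $\sigma\times\sigma$-invariant, so $\pi\in J(\mu,\nu)$. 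Let $\pi_n\in J_n(\mu,\nu)$ denote the projection of $\pi$ to $\mathscr{A}^n\times\mathscr{A}^n$. By \eqref{eq:fbar-for-measures}, $\fbar_n(\mu,\nu)\leq\int\fbar_n\,d\pi_n$, and since $\fbar_n$ lifted to $\mathscr{A}^\infty\times\mathscr{A}^\infty$ depends only on the first $n$ (discrete) symbols and is therefore continuous,
\[
\int\fbar_n\,d\pi_n=\lim_{k\to\infty}\frac{1}{N_k}\sum_{j=0}^{N_k-1}\fbar_n(\omega_{[j,j+n)},\omega'_{[j,j+n)}).
\]

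It therefore suffices to prove that this sliding-window average is at most $\eps+o_n(1)$ whenever $\fbar(\omega,\omega')<\delta$ for a suitable $\delta$. For this I would pass to the uniformly equivalent pseudometric $\hat f$ of \eqref{eq:fhat}: choose $\delta$ so that $\fbar(\omega,\omega')<\delta$ implies $\hat f(\omega,\omega')<\eps/3$, and fix increasing sequences $(i_r),(i'_r)$ of lower density at least $1-\eps/3$ with $\omega_{i_r}=\omega'_{i'_r}$. For each $j$, restrict the global matching to indices $i_r\in[j,j+n)$ and pair them with the corresponding $i'_r$, translated so that the smallest image anchors a window of length $n$ in $\omega'$. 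A direct counting argument using the lower-density hypothesis then yields $\fbar_n(\omega_{[j,j+n)},\omega'_{[j,j+n)})\leq\eps/2$ for a set of $j$'s of density $\geq 1-\eps/3$; since $\fbar_n\leq 1$ on the complement, averaging gives $\int\fbar_n\,d\pi_n\leq\eps$ for all large $n$, and taking $\limsup_n$ yields $\fbar(\mu,\nu)\leq\eps$.

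The main obstacle is the passage from a single global $\fbar$-bound to uniform control of sliding-window $\fbar_n$'s: the hypothesis constrains only the initial-window behaviour, whereas $\int\fbar_n\,d\pi_n$ is an average over all shifts. The $\hat f$-reformulation makes the matching uniformly accessible from every starting index by presenting it as two subsequences of high lower density which can be restricted to any window. The residual technical issue is a length mismatch --- the images $i'_r$ of $i_r\in[j,j+n)$ populate an $\omega'$-interval of length $\sim n/(1-\eps/3)$ rather than $n$ --- but this is resolved by an additional $O(\eps)$-density thinning of the matched indices, which forces both sides of the matching to fit inside a common window of length $n$. This is essentially the content of Propositions~2.6 and 2.7 of \cite{ORW}, which is why the lemma is obtainable as a direct corollary of their results.
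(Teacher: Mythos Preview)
The paper gives no proof of this lemma; it records it as a direct corollary of Propositions~2.6 and~2.7 of \cite{ORW}. Your proposal ultimately defers to the same reference, but the sketch preceding that deferral has a genuine gap.

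The joining $\pi$ you build is the diagonal empirical limit of $\frac{1}{N}\sum_{j}\delta_{(\sigma^j\omega,\sigma^j\omega')}$, so the quantity you must control is the average of $\fbar_n(\omega_{[j,j+n)},\omega'_{[j,j+n)})$ with both windows anchored at the \emph{same} index $j$. Your bound, however, rests on the global $\hat f$-matching $(i_r),(i'_r)$, and the density hypothesis only forces $|i_r - i'_r| = O(\eps r)$. For $j\gg n/\eps$ the images $i'_r$ of those $i_r\in[j,j+n)$ need not lie anywhere near $[j,j+n)$; your ``translation'' then matches $\omega_{[j,j+n)}$ with $\omega'_{[j',j'+n)}$ for some $j'$ possibly far from $j$, which says nothing about $\fbar_n(\omega_{[j,j+n)},\omega'_{[j,j+n)})$. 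You diagnose a length mismatch, but the unaddressed \emph{position} drift is the real obstruction.

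Concretely: take $\omega$ generic for the Bernoulli $(\tfrac12,\tfrac12)$ measure on $\{0,1\}^\infty$, let $S=\{s_0<s_1<\ldots\}\subset\N$ have density $1-\eta$, and set $\omega'_r=\omega_{s_r}$. Then $\hat f(\omega,\omega')\le\eta$, and for suitable $S$ the point $\omega'$ is again generic for the same measure, so $\fbar(\mu,\nu)=0$. Yet $s_j-j\sim j\eta$, so once $j>n/\eta$ the windows $\omega_{[j,j+n)}$ and $\omega'_{[j,j+n)}$ come from disjoint, essentially independent stretches of $\omega$; their $\fbar_n$-distance is then bounded below by a positive constant independent of $\eta$ and $n$. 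Hence $\int\fbar_n\,d\pi_n$ does not become small with $\eta$, and this diagonal joining cannot certify $\fbar(\mu,\nu)<\eps$. The coupling that works in \cite{ORW} is built differently --- from a block-parsing of $\omega_{[0,N)}$ and $\omega'_{[0,N)}$ aligned by the $\fbar_N$-match itself, not from the synchronous orbit pair --- and that is why the lemma is genuinely a corollary of their Propositions~2.6--2.7 rather than of the argument you outline.
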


Finally, note that the entropy function $\mu\mapsto h(\mu)$ is uniformly continuous with respect to the $\fbar$-metric on the space of shift-invariant measures  on $\mathscr A^\infty$
(for ergodic measures this is \cite[Proposition 3.4]{ORW}, the assumption of ergodicity can be removed thanks to the main result of \cite{DKL}).

\subsection{Loosely Kronecker systems} \label{sec:losselyK} An ergodic measure-preserving system $\XX=(X,\mathscr{X},\mu,T)$ is \emph{loosely Kronecker} if it has zero entropy and for every finite Borel partition $\mathcal{P}$ of $X$ and every $\eps>0$ there are $n>0$ and a set $A_n$ of atoms of $\mathcal{P}^n=\bigvee_{j=0}^{n-1}T^{-j}(\mathcal{P})$ such that
$\mu(A_n)>1-\eps$ and $\fbar_n(u,w)<\eps$ for $u,w\in A_n$ (here, as usual, we identify atoms of the partition $\mathcal{P}^n$ with words of length $n$ over the alphabet $\{0,1,\ldots,|\mathcal{P}|-1\}$).

\section{GIKN construction}\label{sec:GIKN}

We present the construction of Gorodetski, Ilyashenko, Kleptsyn, and Nalsky from \cite{GIKN} following the exposition provided by Bonatti, D\'{\i}az, and Gorodetski in~\cite{BDG}. Recall that $X$ is a compact metric space, $\rho$ is a metric for $X$, and $T\colon X\to X$ is a continuous map. By definition, each royal measure is the weak$^*$ limit of measures supported on a sequence of periodic points with special properties. Each periodic orbit in this sequence can be divided in two parts: the \emph{shadowing part} and the \emph{tail}. Their key features
are:
\begin{enumerate}
  \item The shadowing part takes a large proportion (growing to $1$ as $n$ goes to $\infty$) of each periodic orbit in the sequence. Furthermore, the images of each point on the shadowing part of the $n$-th orbit are $\gamma_n$-close to the $(n-1)$-th orbit for the number of iterates equal to the primary period of the $(n-1)$-th orbit and the series formed by $\gamma_n$'s is summable. This is used to show that the limit measure is ergodic.
  
  \item There is a fixed (center) direction such that the Lyapunov exponent in that direction along the sequence of periodic orbits decreases to zero. To achieve this, each periodic orbit spends a small proportion of its primary period (this part of the orbit is called the \emph{tail}) in a region which is far from the previous orbits and is chosen so that the Lyapunov exponent in the center direction along the whole orbit has smaller absolute value than the same exponent for the previously constructed orbits. This also guarantees that the limit measure is nonatomic.
\end{enumerate}

\subsection{Topological backbone of the GIKN construction} In this paragraph we sketch the main features of the GIKN construction. Our results about $\Fbar$-limits like Theorems \ref{lsc} and \ref{thm:looselyK} apply to any measure defined in this way, because the GIKN construction leads to $\Fbar$-convergent sequences of periodic orbits, s Theorem \ref{thm:GIKNmain}.
\begin{defn}\label{def:good-approx}
We say that a $T$-periodic orbit $\Gamma$ is a \emph{$(\gamma,\kappa)$-good approximation} of a $T$-periodic orbit $\Lambda$ if there are a subset $\Delta$ of $\Gamma$ with $|\Delta|/|\Gamma|\ge\kappa$ and a constant-to-one surjection $\psi\colon\Delta\to\Lambda$ (called \emph{$(\gamma,\kappa)$-projection}) such that for each $y\in\Delta$ and $0\le j <|\Lambda|$ we have
\[
\rho(T^j(y),T^j(\psi(y)))<\gamma.
\]
\end{defn}
\begin{defn}\label{def:GIKN-sequence}
We call a sequence $(\Gamma_n)_{n\in\N}$ of $T$-periodic orbits with $|\Gamma_n|\nearrow\infty$ as $n\to\infty$ 
a \emph{GIKN sequence} if
there are sequences of positive real numbers $(\gamma_n)_{n=1}^\infty$ and $(\kappa_n)_{n=1}^\infty$ such that the following conditions hold:
\begin{enumerate}
  \item for each $n\ge1$, 
  $\Gamma_{n+1}$ is a $(\gamma_n,\kappa_n)$-good approximation of $\Gamma_n$,
  \item $\sum_{n=1}^\infty\gamma_n<\infty$,
  \item $\prod_{n=1}^\infty\kappa_n>0$. 
\end{enumerate} 
By an abuse of terminology, we will also refer to the sequence of ergodic measures $(\mu_n)_{n=1}^\infty$, where $\mu_n$ is  supported on $\Gamma_n$, as to \emph{GIKN sequence of measures}.
\end{defn}
The following is a slightly reformulated \cite[Lemma 2.5]{BDG}. The proof that $\mu$ is ergodic in~\cite{BDG} invokes \cite[Lemma 2]{GIKN}.
\begin{thm}\label{thm:GIKN}
Let $(\Gamma_n)_{n\in\N}$ be a GIKN sequence of $T$-periodic orbits. If $(\mu_n)_{n=1}^\infty$ is the associated sequence of ergodic measures,  
then $(\mu_n)_{n\in\N}$ weak$^*$ converges to an ergodic measure $\mu$ supported on the topological limit of $(\Gamma_n)_{n\in\N}$, that is,
\[
\supp\mu=\bigcap_{k=1}^\infty\overline{\bigcup_{n\ge k}\Gamma_n}.
\]
\end{thm}

\begin{defn}
A measure $\mu$ is \emph{royal} if it is a weak$^*$ limit of a GIKN-sequence.
\end{defn}

With this terminology, Theorem \ref{thm:GIKN} states that every GIKN-sequence determines a \emph{royal measure}. 

\subsection{Lyapunov exponents} We discuss how the GIKN construction is used  to find nonhyperbolic measures. This part is logically independent from the rest of the paper. 

Let $M$ be a smooth Riemannian manifold with $\dim M=m$. If $f\colon M\to M$ is a diffeomorphism and $\mu$ is an ergodic $f$-invariant measure, then there exist a set $\Lambda\subset M$ of full $\mu$-measure and real numbers $\chi^1_\mu\le\ldots\le \chi^m_\mu$ such that for every $x\in \Lambda$ and nonzero vector $v\in T_xM$ one has
\[
\lim_{n\to\infty}\frac{1}{n}\log\|Df_x^n(v)\|=\chi^i_\mu\quad\text{for some }i=1,\ldots,m.
\]
The number $\chi^i_\mu$ is the $i$th \emph{Lyapunov exponent} of the measure $\mu$. If there exists a closed $f$-invariant set $\Xi\subset M$ and a continuous $Df$-invariant  direction field $\mathcal E=(E_x)_{x\in\Xi}\subset T_\Xi M$ with $\dim E_x=1$ for $x\in\Xi$, then for every measure $\nu\in\M_f(M)$ with $\supp\nu\subset\Xi$ there is a Lyapunov exponent $\chi^{\mathcal E}(\nu)$ of $\nu$ associated with $\mathcal E$ in the following sense: for $\nu$-a.e. $x\in M$ and a nonzero vector $v\in E_x$ one has
\[
\lim_{n\to\infty}\frac{1}{n}\log\|Df_x^n(v)\|=\chi^{\mathcal E}(\nu).
\]
Furthermore, if $(\mu_n)_{n\in\N}$ is a sequence 
in $\M^e_f(\Xi)$, $\mu\in\M^e_f(M)$ and $\mu_n\to \mu$ as $n\to\infty$ in the weak$^*$ topology (it implies that $\supp\mu\subset\Xi$ as well), then $\chi^{\mathcal E}(\mu_n)\to\chi^{\mathcal E}(\mu)$ as $n\to\infty$ \cite[Lemma 1]{GIKN}.

\begin{thm}\label{thm:BDG}
Assume that $f\colon M\to M$ is a diffeomorphism of a smooth Riemannian compact manifold with a closed $f$-invariant set $\Xi\subset M$ and a continuous $Df$-invariant  direction field $\mathcal E=(E_x)_{x\in\Xi}\subset T_\Xi M$ with $\dim E_x=1$ for $x\in\Xi$. Let $(\Gamma_n)_{n\in\N}\subset\Xi$ be a sequence of $f$-periodic orbits and suppose that $|\Gamma_n|$ increases to infinity as $n\to\infty$. For each $n$ let $\mu_n$ be the ergodic measure supported on $\Gamma_n$. Furthermore, assume that the following conditions hold:
\begin{enumerate}
  \item There exist sequences of positive real numbers $(\gamma_n)_{n=1}^\infty$ and a constant $C$ such that for each $n$ the orbit $\Gamma_{n+1}$ is a $(\gamma_n,1-C|\chi^{\mathcal E}(\mu_n)|)$-good approximation of $\Gamma_n$;
  \item There exists a constant $0<\alpha<1$ such that \[|\chi^{\mathcal E}(\mu_{n+1})|<\alpha|\chi^{\mathcal E}(\mu_n)|;\]
  \item $\displaystyle\gamma_n<\frac{\min_{1\le i\le n}d_i}{3\cdot 2^n}$, where $d_i$ denotes the minimal distance between different points in $\Gamma_i$.
\end{enumerate}
Then $(\Gamma_n)_{n\in\N}\subset\Xi$ is a GIKN sequence and $(\mu_n)_{n\in\N}$ weak$^*$ converges to a royal measure $\mu$ that is ergodic and has uncountable support equal to the topological limit of $(\Gamma_n)_{n\in\N}$. 
Furthermore, $\mu$ is nonhyperbolic, since $\chi^{\mathcal E}(\mu)=0$.
\end{thm}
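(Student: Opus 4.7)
The plan is to reduce Theorem~\ref{thm:BDG} to Theorem~\ref{thm:GIKN} and handle the two extra conclusions (vanishing center exponent and uncountable support) separately. Concretely, I would carry out three steps: first, check that the hypotheses of Theorem~\ref{thm:GIKN} are satisfied so as to obtain the ergodic measure $\mu$ and the identification of its support with the topological limit; second, deduce $\chi^{\mathcal{E}}(\mu) = 0$ from the continuity of the center Lyapunov exponent under weak$^*$ limits; third, use hypothesis (3) together with the constant-to-one structure of the GIKN projections to embed a Cantor set into the topological limit.

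For the first step, hypothesis (1) of Theorem~\ref{thm:BDG} directly provides $(\gamma_n,\kappa_n)$-good approximations with $\kappa_n = 1 - C|\chi^{\mathcal{E}}(\mu_n)|$. Hypothesis (3) gives $\gamma_n < d_1/(3\cdot 2^n)$ because $\min_{1 \le i \le n} d_i \le d_1$, so $\sum \gamma_n$ is summable. Hypothesis (2) forces $|\chi^{\mathcal{E}}(\mu_n)| \le \alpha^{n-1}|\chi^{\mathcal{E}}(\mu_1)|$, which makes $\sum(1-\kappa_n)$ convergent and hence $\prod \kappa_n > 0$. Theorem~\ref{thm:GIKN} then yields the ergodic weak$^*$ limit $\mu$ with $\supp\mu$ equal to the topological limit of $(\Gamma_n)$. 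For the second step, the continuity result from \cite[Lemma~1]{GIKN} cited earlier applies to the weak$^*$-convergent sequence $\mu_n \to \mu$ of ergodic measures in $\Xi$, so $\chi^{\mathcal{E}}(\mu) = \lim \chi^{\mathcal{E}}(\mu_n)$, and hypothesis (2) gives $\chi^{\mathcal{E}}(\mu_n) \to 0$.

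For uncountability of $\supp\mu$ I would argue as follows. Let $r_n = |\Delta_{n+1}|/|\Gamma_n|$; by the constant-to-one property this is a positive integer, and $|\Delta_{n+1}| \ge \kappa_n|\Gamma_{n+1}|$ yields $r_n|\Gamma_n| \ge \kappa_n|\Gamma_{n+1}|$. I first claim $r_n \ge 2$ for infinitely many $n$: otherwise $r_n = 1$ from some $N$ onwards, so $|\Gamma_{n+1}| \le |\Gamma_n|/\kappa_n$, and iteration gives $|\Gamma_n| \le |\Gamma_N|/\prod_{k=N}^{n-1}\kappa_k \le |\Gamma_N|/\prod_{k\ge N}\kappa_k < \infty$, contradicting $|\Gamma_n|\to\infty$. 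Along a branching subsequence $n_1 < n_2 < \ldots$ with $r_{n_k}\ge 2$, I would build a binary tree of chains of points by iterating the projections $\psi$ and at each branching level choosing one of (at least) two preimages. Hypothesis (3) yields $2\sum_{k\ge n}\gamma_k < d_n$, so the balls $B(x,\sum_{k\ge n}\gamma_k)$ with $x\in\Gamma_n$ are pairwise disjoint and distinct infinite chains produce distinct limit points in the topological limit, giving an injective map from a Cantor space into $\supp\mu$. The step I expect to be the main obstacle is the branching claim $r_n \ge 2$ infinitely often; it is the place where the particular numerical form of the hypotheses of Theorem~\ref{thm:BDG} (beyond those of Theorem~\ref{thm:GIKN}) is essential, and once it is secured the rest of the Cantor construction is routine because (3) is calibrated exactly so as to make the address map injective.
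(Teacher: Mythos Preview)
The paper does not actually prove Theorem~\ref{thm:BDG}. It is stated in the subsection on Lyapunov exponents, which the authors explicitly flag as ``logically independent from the rest of the paper,'' and it is presented as a compilation of results from \cite{BDG} and \cite{GIKN}. The text preceding the theorem supplies only the ingredients: Theorem~\ref{thm:GIKN} (quoted from \cite[Lemma~2.5]{BDG}) for ergodicity and the description of the support, and \cite[Lemma~1]{GIKN} for continuity of $\chi^{\mathcal E}$ under weak$^*$ limits. The uncountability of the support is handled in \cite[Proposition~2.7]{BDG}, which the paper cites in the introduction but does not reproduce.

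Your proposal is correct and follows exactly the route implicit in the paper's exposition. Steps~1 and~2 are precisely what the paper's setup suggests: the verification $\sum\gamma_n<\infty$ and $\prod\kappa_n>0$ reduces everything to Theorem~\ref{thm:GIKN}, and the vanishing of $\chi^{\mathcal E}(\mu)$ is the application of \cite[Lemma~1]{GIKN} cited just before the statement. Your Step~3 supplies the argument that the paper outsources to \cite{BDG}: the branching claim ($r_n\ge 2$ infinitely often, forced by $|\Gamma_n|\to\infty$ and $\prod\kappa_n>0$) together with the calibration (3), which gives $\sum_{k\ge m}\gamma_k<d_m/3$ and hence separates distinct branches in the tree of $\psi$-preimages. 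This is essentially the argument in \cite{BDG}, so there is nothing to compare beyond noting that you have filled in what the present paper deliberately leaves as a citation.
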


\section{Feldman-Katok Pseudometric}\label{sec:FK}

In this section, we first recall the definition of Besicovitch pseudometric, then introduce Feldman-Katok pseudometric, which extends $\fbar$ to general metric spaces.

For symbolic dynamical systems (subsystems of the full shift $(\mathscr{A}^\infty,\sigma)$) the metrics $\dbar$ and $\fbar$ on $\M_\sigma(\mathscr A^\infty)$ defined in Section \ref{sec:fbar-dbar} are related with identically denoted pseudometrics on $\mathscr A^\infty$ given by, respectively,  \eqref{formula} and \eqref{eq:fbar}.
This connection is described in more detail in~\cite{ORW, Shields, Weiss}. From the point of view of ergodic theory there is no need to extend $\dbar$ and $\fbar$ from $\mathscr A^\infty$ to more general metric spaces. It turns out, however, that for some geometric applications an extension of $\dbar$, called the \emph{Besicovitch pseudometric}, is very useful (see \cite{BCKO,BL,KLO} and references therein).  On the other hand, we are not aware of any analogue of $\fbar$ pseudometric for general metric space in the literature.

\subsection{Besicovitch pseudometric $D_B$}
For $\und x=(x_j)_{j=0}^\infty,\,\und z=(z_j)_{j=0}^\infty\in X^{\infty}$ we define the \emph{Besicovitch pseudometric} $D_B$ on $X^\infty$ as
\[D_B(\und x, \und z)=\limsup\limits_{n\to\infty}\frac{1}{n}\sum_{j=0}^{n-1}\rho(x_j, z_j).\]
It is known \cite{KLO} that $D_B$ is uniformly equivalent to $D_B'$ given by
\begin{equation}
    \label{eq:DBprime}
    D'_B(\und{x},\und{z})=\inf
\{\delta>0: \dbar(\{n\ge 0: \rho(x_n,z_n)\ge \delta\})<\delta\}.
\end{equation}
Given $T\colon X\to X$, the \emph{Besicovitch pseudometric} $D_B$ on $X$ is defined by $D_B(x,y)=D_B(\und{x}_T,\und{z}_T)$. If $X=\mathscr A^\infty$ and $\rho$ is any metric compatible with the topology on $\mathscr A^\infty$, then the Besicovitch pseudometric $D_B$ and the d-bar pseudometric $\dbar$ are uniformly equivalent on $\mathscr A^\infty$.

\subsection{Pseudometric $\Fbar$} 
Fix $\und x=(x_j)_{j=0}^\infty$, $\und z=(z_j)_{j=0}^\infty\in X^{\infty}$, $\delta>0$, and $n\in \N$.
\begin{defn}\label{def:match}
An \emph{$(n,\delta)$-match} between $\und x$ and $\und z$ is an order-preserving bijection $\pi\colon\Dom(\pi)\to\Ran(\pi)$ such that $\Dom(\pi),\Ran(\pi)\subset \{0,1,\ldots,n-1\}$ and for every $i\in\Dom(\pi)$ we have $\rho(x_i,z_{\pi(i)})<\delta$. The \emph{fit} $|\pi|$ of  $\pi$ is the cardinality of $\Dom(\pi)$.
\end{defn}
\begin{defn}
An $(n,\delta)$-match is  \emph{maximal} if its fit is the largest possible. If there is no $(n,\delta)$-match $\pi$ with $|\pi|\ge 1$, then the \emph{empty match $\pi_\emptyset$} with $|\pi_\emptyset|=0$ is the maximal one. The \emph{$(n,\delta)$-gap} between $\und x$ and $\und z$ is given by
\[
\fbar_{n,\delta}(\und x,\und z)=1-\frac{\max\{|\pi|:\text{ $\pi$ is an $(n,\delta)$-match of $\und x$ with $\und z$}\}}{n}.
\]
\end{defn}

Note that if $X=\mathscr{A}^\infty$ for some finite alphabet $\mathscr{A}$ and we endow $X=\mathscr{A}^\infty$ with the standard metric given by
\[
\rho(\und\omega,\und\omega')=\begin{cases}
                               0, & \mbox{if }\und\omega=\und\omega',  \\
                               2^{-\min\{j\ge0: \omega_j\neq\omega'_j\}}, & \mbox{otherwise},
                             \end{cases}
\]
then (with a minor abuse of notation) we have $\fbar_{n,1}(\und x,\und z)=\fbar_n(x_0x_1\ldots x_{n-1},z_0z_1\ldots z_{n-1})$ for any
$\und x,\und z\in\mathscr{A}^\infty$.

We note some properties of the $(n,\delta)$-gap function $\fbar_{n,\delta}$ that hold for any
$\und x$, $\und z\in X^{\infty}$, $\eps,\delta>0$, and $n\in \N$. 
The proofs are left to the reader.

\begin{fact}\label{fact:fbar_mono}
If $0<\delta<\delta'$, then $\fbar_{n,\delta'}(\und x,\und z)\le\fbar_{n,\delta}(\und x,\und z)$. 
\end{fact}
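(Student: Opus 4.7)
The plan is to observe that the inequality is immediate from the definition of an $(n,\delta)$-match, because the matching condition $\rho(x_i,z_{\pi(i)})<\delta$ only gets easier to satisfy as $\delta$ increases. So the strategy is to produce, for each candidate $(n,\delta)$-match, a corresponding $(n,\delta')$-match with the same fit, and then conclude from the definition of the gap.

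First I would fix any $(n,\delta)$-match $\pi\colon\Dom(\pi)\to\Ran(\pi)$, so $\Dom(\pi),\Ran(\pi)\subset\{0,1,\ldots,n-1\}$, $\pi$ is an order-preserving bijection, and $\rho(x_i,z_{\pi(i)})<\delta$ for every $i\in\Dom(\pi)$. Since $\delta<\delta'$ gives $\rho(x_i,z_{\pi(i)})<\delta<\delta'$, the very same $\pi$ qualifies as an $(n,\delta')$-match. Hence the collection of $(n,\delta')$-matches contains the collection of $(n,\delta)$-matches, which means
\[
\max\{|\pi|:\text{$\pi$ is an $(n,\delta)$-match of $\und x$ with $\und z$}\}\le \max\{|\pi|:\text{$\pi$ is an $(n,\delta')$-match of $\und x$ with $\und z$}\}.
\]
The empty match is available in both cases, so both maxima are well defined and at least $0$, and no case analysis is needed.

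Finally, plugging this inequality into the definition
\[
\fbar_{n,\delta}(\und x,\und z)=1-\frac{\max\{|\pi|:\text{$\pi$ is an $(n,\delta)$-match}\}}{n}
\]
and noting that a larger maximum fit produces a smaller value of $\fbar_{n,\cdot}$, yields $\fbar_{n,\delta'}(\und x,\und z)\le\fbar_{n,\delta}(\und x,\und z)$, as required. There is essentially no obstacle; the only subtle point worth flagging is that the definition uses the strict inequality $\rho(x_i,z_{\pi(i)})<\delta$, so the inclusion of match-sets is clean and no boundary case at $\rho=\delta$ needs separate treatment.
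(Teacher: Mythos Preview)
Your proof is correct and is exactly the natural argument one would expect; the paper itself leaves this fact to the reader, and your reasoning---that every $(n,\delta)$-match is automatically an $(n,\delta')$-match, hence the maximal fit cannot decrease---is precisely the intended one.
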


\begin{fact}\label{fact:fbar_n+q}
If 
$q\in \N$, then $\fbar_{n,\delta}(\und x,\und z)\le\fbar_{n+q,\delta}(\und x,\und z)+q/n$. 
\end{fact}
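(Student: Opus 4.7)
The plan is to take any maximal $(n+q,\delta)$-match $\pi^*$ of $\und x$ with $\und z$ and truncate it to an $(n,\delta)$-match by discarding the pairs that fall outside the initial window. Enumerating $\Dom(\pi^*)=\{i_1<i_2<\cdots<i_k\}$, the order-preserving property forces $\pi^*(i_1)<\pi^*(i_2)<\cdots<\pi^*(i_k)$. I would define $\pi$ to be the restriction of $\pi^*$ to those $i_j$ for which both $i_j<n$ and $\pi^*(i_j)<n$. Then $\pi$ inherits the order-preservation and the pointwise distance condition $\rho(x_{i_j},z_{\pi^*(i_j)})<\delta$, so it is a valid $(n,\delta)$-match.

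The key step is to show that $|\pi|\ge|\pi^*|-q$. Since $\Dom(\pi^*),\Ran(\pi^*)\subset\{0,1,\ldots,n+q-1\}$, at most $q$ of the indices $i_j$ can satisfy $i_j\ge n$, and at most $q$ of the values $\pi^*(i_j)$ can satisfy $\pi^*(i_j)\ge n$. Because both sequences $(i_j)_j$ and $(\pi^*(i_j))_j$ are strictly increasing, each of these two ``bad'' sets of indices $j\in\{1,\ldots,k\}$ is a final segment, so their union is again a final segment of size at most $q$. Hence $|\pi|\ge k-q=|\pi^*|-q$.

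The inequality then drops out of a direct computation. Since the maximal $(n,\delta)$-match has fit at least $|\pi|$,
\[
\fbar_{n,\delta}(\und x,\und z)\le 1-\frac{|\pi^*|-q}{n}=1-\frac{|\pi^*|}{n}+\frac{q}{n}\le 1-\frac{|\pi^*|}{n+q}+\frac{q}{n}=\fbar_{n+q,\delta}(\und x,\und z)+\frac{q}{n},
\]
where the middle inequality uses only $|\pi^*|/n\ge|\pi^*|/(n+q)$. There is no real obstacle; the only point worth naming is the observation that the two bad sets of indices are final segments (rather than arbitrary subsets), which is what prevents the error from doubling to $2q/n$.
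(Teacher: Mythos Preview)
Your proof is correct and follows the same approach as the paper's: take a maximal $(n+q,\delta)$-match, restrict to indices in $\{0,\ldots,n-1\}$ in both domain and range, and bound the resulting loss in fit by $q$. Your explicit observation that the two ``bad'' index sets are final segments (so their union has size at most $q$, not $2q$) is exactly the point the paper's proof uses implicitly when it asserts $|\pi'|\ge|\pi|-q$ after removing $\{n,\ldots,n+q-1\}$ from both $\Dom(\pi)$ and $\Ran(\pi)$.
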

\begin{fact}\label{fact:fbar_implies_di}
If $\fbar_{n,\delta}(\und x,\und z)<\eps$, then 
$\di(\m( \und x, n), \m( \und z, n))< \max\{\delta,\eps\}$.
\end{fact}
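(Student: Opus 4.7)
The plan is to bound the Prokhorov distance $\di$ directly by using an $(n,\delta)$-match that realizes the fit $1-\fbar_{n,\delta}(\und x,\und z)$, and to convert it into the inequality $\m(\und x,n)(B)\le \m(\und z,n)(B^\eta)+\eta$ for every Borel $B$, with $\eta<\max\{\delta,\eps\}$. Fix a maximal $(n,\delta)$-match $\pi$, so $|\pi|>n(1-\eps)$. Because $\Dom(\pi)$ is finite, I can replace $\delta$ by a slightly smaller $\delta'<\delta$, chosen strictly between $\max_{i\in\Dom(\pi)}\rho(x_i,z_{\pi(i)})$ and $\delta$, so that $\rho(x_i,z_{\pi(i)})<\delta'$ holds for every $i\in\Dom(\pi)$. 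Set $\eps':=(n-|\pi|)/n<\eps$.

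Next, for an arbitrary Borel set $B$, I split $\{0,\ldots,n-1\}$ into $\Dom(\pi)$ and its complement. The complement contributes at most $(n-|\pi|)/n=\eps'$ to $\m(\und x,n)(B)$. For every $i\in\Dom(\pi)$ with $x_i\in B$ the choice of $\delta'$ forces $z_{\pi(i)}\in B^{\delta'}$, and since $\pi$ is injective the set $\{\pi(i):i\in\Dom(\pi),\,x_i\in B\}$ has the same cardinality as $\{i\in\Dom(\pi):x_i\in B\}$ and is contained in $\{0\le j<n:z_j\in B^{\delta'}\}$. Putting these estimates together yields
\[
\m(\und x,n)(B)\;\le\;\m(\und z,n)(B^{\delta'})+\eps'.
\]

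Now let $\eta:=\max\{\delta',\eps'\}$. Since $\delta'<\delta$ and $\eps'<\eps$, we have $\eta<\max\{\delta,\eps\}$. Using $B^{\delta'}\subset B^\eta$ and $\eps'\le\eta$, the previous inequality upgrades to $\m(\und x,n)(B)\le \m(\und z,n)(B^\eta)+\eta$, uniformly in the Borel set $B$. By definition of the Prokhorov metric, this gives $\di(\m(\und x,n),\m(\und z,n))\le\eta<\max\{\delta,\eps\}$, as required.

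The only delicate point is the shrinking step from $\delta$ to $\delta'$, which is needed because the Prokhorov metric is defined as an infimum over open strict thresholds while the match only satisfies open strict inequalities $\rho(x_i,z_{\pi(i)})<\delta$. The finiteness of $\Dom(\pi)$ makes this step trivial, but without it one would only obtain $\di\le\max\{\delta,\eps\}$ rather than the strict inequality claimed. Everything else is a straightforward accounting of matched versus unmatched coordinates in the two empirical measures.
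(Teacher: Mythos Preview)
Your proof is correct and follows the same line as the paper's: fix a Borel set $B$, split the indices into those in $\Dom(\pi)$ and those outside, and use the match to push $x_i\in B$ to $z_{\pi(i)}\in B^\delta$. The paper's (suppressed) argument stops at the chain
\[
\m(\und x,n)(B)\le \m(\und z,n)(B^\delta)+\fbar_{n,\delta}(\und x,\und z)<\m(\und z,n)(B^{\max\{\delta,\eps\}})+\max\{\delta,\eps\},
\]
which literally yields only $\di\le\max\{\delta,\eps\}$. Your finiteness-based shrinking from $\delta$ to $\delta'$ (and from $\eps$ to $\eps'=(n-|\pi|)/n$) is exactly what is needed to upgrade this to the strict inequality stated in the Fact, so your version is in fact slightly more careful than the paper's own write-up.
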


\begin{defn}
The \emph{$\fbar_\delta$-pseudodistance} between $\und x$ and $\und z$ is given by
\[
\fbar_{\delta}(\und x,\und z)=\limsup_{n\to\infty}\fbar_{n,\delta}(\und x,\und z).
\]
\end{defn}

\begin{fact}\label{fact:fbar_Ti}
If 
$i\in\N\cup\{0\}$, then $\fbar_\delta(\und x,\und z)=\fbar_\delta(\und x,\sigma^i(\und z))$. 
In particular, $\fbar_\delta(\und x,\sigma^i(\und x))=0$.
\end{fact}

\begin{fact}\label{fact:superad}
Assume that $\und x$ and $\und z$ are periodic with a common period $N$. If $p^{(n)}$ denotes the fit of a maximal $(nN,\delta)$-match, then the sequence $(p^{(n)})_{n=1}^\infty$ is subadditive, that is $p^{(\ell)}+p^{(m)}\ge p^{(\ell+m)}$ for every $\ell,m\in\N$.
\end{fact}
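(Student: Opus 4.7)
The plan is to prove $p^{(\ell)}+p^{(m)}\ge p^{(\ell+m)}$ by taking a maximal $((\ell+m)N,\delta)$-match $\pi$ of $\und{x}$ and $\und{z}$ with fit $|\pi|=p^{(\ell+m)}$ and decomposing it into an $(\ell N,\delta)$-match $\pi_\ell$ and an $(mN,\delta)$-match $\pi_m$ whose fits sum to at least $|\pi|$. The leverage comes from $\ell N$ being a multiple of the common period $N$, so both $\und{x}$ and $\und{z}$ are invariant under the shift $j\mapsto j+\ell N$, and $\delta$-closeness of a pair $(x_i,z_{\pi(i)})$ is preserved when either index is translated by any multiple of $N$.

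Split the domain at $\ell N$: put $A:=\Dom(\pi)\cap[0,\ell N)$ and $B:=\Dom(\pi)\cap[\ell N,(\ell+m)N)$, so $|A|+|B|=|\pi|$. Order preservation of $\pi$ forces $\max\pi(A)<\min\pi(B)$, so the threshold $\ell N$ meets the image of $\pi$ at a single place. In the clean subcase $\pi(A)\subset[0,\ell N)$ and $\pi(B)\subset[\ell N,(\ell+m)N)$: the restriction $\pi|_A$ is itself an $(\ell N,\delta)$-match of fit $|A|$, and by periodicity the translate $i\mapsto\pi(i+\ell N)-\ell N$ on $B-\ell N\subset[0,mN)$ is an $(mN,\delta)$-match of fit $|B|$. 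Hence $p^{(\ell)}+p^{(m)}\ge|A|+|B|=p^{(\ell+m)}$ directly.

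Otherwise there is a crossing: either some $i\in A$ has $\pi(i)\ge\ell N$ (upward), or some $i\in B$ has $\pi(i)<\ell N$ (downward), but monotonicity rules out both simultaneously. In the upward case let $A''\subset A$ be the suffix consisting of indices with $\pi$-image in $[\ell N,(\ell+m)N)$. Then $\pi(B)\subset[\ell N,(\ell+m)N)$, so $\pi|_B$ still translates to an $(mN,\delta)$-match of fit $|B|$; the restriction $\pi|_{A\setminus A''}$ gives part of the candidate $\pi_\ell$ with image already inside $[0,\ell N)$; and for each $i\in A''$ periodicity yields $\rho(x_i,z_{\pi(i)-\ell N})<\delta$, suggesting the assignment $\pi_\ell(i):=\pi(i)-\ell N$. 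The symmetric downward case is handled by translating $A$-indices instead.

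The main obstacle is to verify that these shifts actually combine into a legitimate $(\ell N,\delta)$-match of total fit $|A|$: the values $\pi(i)-\ell N$ for $i\in A''$ a priori lie in $[0,mN)$ rather than in $[0,\ell N)$, and even when they do fall into $[0,\ell N)$ they may collide with the values of $\pi|_{A\setminus A''}$ or violate monotonicity. The plan is to resolve this iteratively: subtract further multiples of $N$ from individual crossing values (again licensed by periodicity), using the fact that the slots $[0,\ell N)\setminus\pi(A\setminus A'')$ have cardinality $\ell N-|A\setminus A''|\ge |A''|$ by the original injectivity of $\pi$, and exploiting the order preservation inherited from $\pi$ to place the adjusted images in increasing order. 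Once this adjustment is carried out, $|\pi_\ell|+|\pi_m|\ge |A|+|B|=p^{(\ell+m)}$ gives the subadditive bound.
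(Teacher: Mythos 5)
Your proposal has a genuine gap, and in fact the inequality you are trying to prove cannot be proved: as printed, the fact asserts subadditivity, $p^{(\ell)}+p^{(m)}\ge p^{(\ell+m)}$, and this is false in general. Take $X=\{a,b\}$ with $\rho(a,b)=1$, $\delta=1/2$, $\und x=abab\ldots$, $\und z=baba\ldots$, $N=2$, $\ell=m=1$. An $(n,\delta)$-match then pairs equal symbols only, so $p^{(n)}$ is the length of a longest common subsequence of the initial words of length $2n$; here $p^{(1)}=1$ (for $ab$ versus $ba$) while $p^{(2)}=3$ (match $aba$ inside $abab$ and $baba$), so $p^{(2)}>p^{(1)}+p^{(1)}$. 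What the paper actually needs is the reverse inequality, superadditivity $p^{(\ell+m)}\ge p^{(\ell)}+p^{(m)}$: this is what the label suggests, and it is what Fact \ref{fact:f_delta_for_per} requires, since the equality $\fbar_{\delta}(\und x,\und z)=\inf_{n}\fbar_{nN,\delta}(\und x,\und z)$ there is exactly Fekete's lemma applied to a superadditive sequence $(p^{(n)})$, i.e.\ $p^{(n)}/(nN)$ converging to its supremum. The word ``subadditive'' and the displayed inequality in the statement are a slip, and a correct proof must target superadditivity.

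Concretely, your argument breaks at the step you yourself flag as the main obstacle. In the crossing case you cannot, in general, convert the shifted values $\pi(i)-\ell N$, $i\in A''$, into a legitimate $(\ell N,\delta)$-match of fit $|A|$: periodicity only licenses moving an image by multiples of $N$, so each relocated image is confined to its residue class mod $N$, and it must moreover lie above every image of $A\setminus A''$ and respect the internal order of $A''$; counting free slots in $[0,\ell N)$ ignores both constraints. The counterexample above is already of crossing type: a maximal $(4,\delta)$-match is $0\mapsto 1$, $1\mapsto 2$, $2\mapsto 3$, so $A=\{0,1\}$ has $|A|=2$, yet no $(2,\delta)$-match of $ab$ with $ba$ has fit $2$, so no adjustment scheme can succeed (and maximality does not help, since every fit-$3$ match here crosses the boundary). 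The intended statement is instead the concatenation direction, essentially your ``clean subcase'' run in reverse: take a maximal $(\ell N,\delta)$-match $\pi_1$ and a maximal $(mN,\delta)$-match $\pi_2$, and define $\pi(i)=\pi_1(i)$ for $i\in\Dom(\pi_1)$ and $\pi(\ell N+i)=\ell N+\pi_2(i)$ for $i\in\Dom(\pi_2)$; since $\ell N$ is a multiple of the common period, $\rho(x_{\ell N+i},z_{\ell N+\pi_2(i)})=\rho(x_i,z_{\pi_2(i)})<\delta$, and $\pi$ is order preserving, so $p^{(\ell+m)}\ge|\pi_1|+|\pi_2|=p^{(\ell)}+p^{(m)}$. (The paper leaves this proof to the reader, but this one-line concatenation is the argument its later use presupposes.)
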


\begin{fact}\label{fact:f_delta_for_per}
If $\und x$ and $\und z$ are periodic sequences with a common period $N$, then
\[
\fbar_{\delta}(\und x,\und z)=\inf_{n\in\N}\fbar_{nN,\delta}(\und x,\und z)=\lim_{n\to\infty}\fbar_{nN+q,\delta}(\und x,\und z)
\]
for every $0\le q<N$.
\end{fact}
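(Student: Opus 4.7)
The plan is to combine the super-additivity of the maximal fit (Fact~\ref{fact:superad}) with Fekete's lemma, and then use the comparison of Fact~\ref{fact:fbar_n+q} to pass from indices divisible by $N$ to arbitrary indices. First I would set $p^{(n)}$ to be the fit of a maximal $(nN,\delta)$-match of $\und{x}$ with $\und{z}$, so that $\fbar_{nN,\delta}(\und{x},\und{z})=1-p^{(n)}/(nN)$. Fact~\ref{fact:superad} gives $p^{(\ell+m)}\ge p^{(\ell)}+p^{(m)}$, and since $p^{(n)}/n\le N$, the super-additive version of Fekete's lemma yields $\lim_n p^{(n)}/n=\sup_n p^{(n)}/n$. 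Dividing by $N$ and subtracting from $1$ turns the supremum into an infimum, producing
\[
L:=\lim_{n\to\infty}\fbar_{nN,\delta}(\und{x},\und{z})=\inf_{n\in\N}\fbar_{nN,\delta}(\und{x},\und{z}).
\]

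Next I would extend this convergence to all arithmetic progressions $\{nN+q\}_{n\in\N}$ with $0\le q<N$. The case $q=0$ is the previous step; for $1\le q<N$, two applications of Fact~\ref{fact:fbar_n+q}, first with parameters $(n=nN,\,q=q)$ and then with $(n=nN+q,\,q=N-q)$, squeeze
\[
\fbar_{nN,\delta}(\und{x},\und{z})-\frac{q}{nN}\le \fbar_{nN+q,\delta}(\und{x},\und{z})\le \fbar_{(n+1)N,\delta}(\und{x},\und{z})+\frac{N-q}{nN+q},
\]
whose outer bounds both tend to $L$ with vanishing error terms, so $\fbar_{nN+q,\delta}(\und{x},\und{z})\to L$.

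Finally, since every positive integer $n$ admits a unique representation $n=mN+q$ with $0\le q<N$, the sequence $(\fbar_{n,\delta}(\und{x},\und{z}))_{n\ge 1}$ is the interleaving of the $N$ subsequences indexed by the residue $q$, each of which tends to $L$ by the previous step; hence the full sequence converges to $L$, and in particular $\fbar_\delta(\und{x},\und{z})=\limsup_n\fbar_{n,\delta}(\und{x},\und{z})=L$, which closes all three equalities. I do not anticipate any real obstacle: the argument is essentially bookkeeping around Fekete's lemma, with the single nontrivial input, super-additivity of $p^{(n)}$ on multiples of the common period, already packaged as Fact~\ref{fact:superad}.
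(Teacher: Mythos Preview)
Your proof is correct and follows essentially the same route as the paper's: superadditivity of $p^{(n)}$ (Fact~\ref{fact:superad}) plus Fekete's lemma gives the equality $\lim_n\fbar_{nN,\delta}=\inf_n\fbar_{nN,\delta}$, then two applications of Fact~\ref{fact:fbar_n+q} squeeze $\fbar_{nN+q,\delta}$ between $\fbar_{nN,\delta}$ and $\fbar_{(n+1)N,\delta}$ up to $O(1/n)$ errors, and decomposing the full sequence into residue classes finishes it. The only cosmetic difference is that the paper combines the two error terms into the single bound $1/n$ rather than keeping $q/(nN)$ and $(N-q)/(nN+q)$ separate.
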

\begin{defn}
The \emph{Feldman-Katok pseudometric on $X^{\infty}$} is given by
\[
\Fbar(\und{x},\und{z})=\inf\{\delta>0:\fbar_{\delta}(\und x,\und z)<\delta\}.
\]
The \emph{Feldman-Katok pseudometric on $X$}, also denoted by $\Fbar$, is defined for $x,z\in X$ by
\[\Fbar(x,z)=\Fbar\big(\und{x}_T,\und{z}_T\big).\]
The symbols $\f_{n,\delta}(x,z)$ and $\f_{\delta}(x,z)$ have the obvious meaning.
\end{defn}

\begin{fact} \label{fact:fbar_d+e}
If $\und x,\und z\in X^{\infty}$ and $\fbar_\delta(\und x,\und z)\le \eps$, for some $\delta,\eps>0$, then $\Fbar(\und x,\und z)\le \delta+\eps$.
\end{fact}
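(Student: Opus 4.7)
The plan is to derive the estimate directly from monotonicity of $\fbar_{n,\delta}$ in $\delta$ (Fact \ref{fact:fbar_mono}) together with the explicit definition of $\Fbar$ as an infimum. No new machinery is needed, and no step here is genuinely hard; the only thing to be careful about is that the infimum in the definition of $\Fbar$ requires the \emph{strict} inequality $\fbar_{\delta'}(\und x,\und z)<\delta'$, so I need to produce a candidate $\delta'$ that realizes this.

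My candidate is $\delta':=\delta+\eps$. First I would observe $\delta'>0$ since $\delta>0$, so $\delta'$ is a legitimate contender for the infimum set $\{\delta'>0:\fbar_{\delta'}(\und x,\und z)<\delta'\}$. Next, since $\delta<\delta'$, Fact \ref{fact:fbar_mono} applied pointwise in $n$ (and hence to the $\limsup$) yields
\[
\fbar_{\delta'}(\und x,\und z)\;\le\;\fbar_{\delta}(\und x,\und z)\;\le\;\eps.
\]
Because $\delta>0$ we have $\eps<\delta+\eps=\delta'$, so the strict inequality $\fbar_{\delta'}(\und x,\und z)<\delta'$ holds. Hence $\delta'$ belongs to the set whose infimum defines $\Fbar(\und x,\und z)$, and consequently $\Fbar(\und x,\und z)\le \delta+\eps$, as claimed.

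The only potential subtlety, and the thing I would double-check, is the edge case $\eps=0$: the assumption $\fbar_\delta(\und x,\und z)\le 0$ still gives $\fbar_{\delta}(\und x,\und z)\le 0<\delta$, so even taking $\delta'=\delta$ (the monotonicity step is vacuous) puts $\delta$ into the infimum set. Thus the bound $\Fbar(\und x,\und z)\le \delta=\delta+\eps$ remains valid, and the argument above covers the general case without modification.
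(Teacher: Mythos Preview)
Your proof is correct and takes essentially the same approach as the paper: both use Fact~\ref{fact:fbar_mono} to pass from $\fbar_\delta(\und x,\und z)\le\eps$ to $\fbar_{\delta+\eps}(\und x,\und z)\le\eps<\delta+\eps$, and then invoke the definition of $\Fbar$ as an infimum. Your discussion of the case $\eps=0$ is unnecessary since the hypothesis explicitly assumes $\eps>0$, but it does no harm.
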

\begin{proof}
If $\fbar_\delta(\und x,\und z)\le \eps$, then $\fbar_{\delta+\eps}(\und x,\und z)<\delta+\eps$ by Fact~\ref{fact:fbar_mono}. Thus $\Fbar(\und x,\und z)\le \delta+\eps$.
\end{proof}
\begin{rem} By Fact \ref{fact:fbar_d+e} the set $\{\delta>0:\fbar_{\delta}(\und x,\und z)<\delta\}$ is nonempty for every $\und x,\und z\in X^{\infty}$.
Furthermore, $0\le \fbar_\delta(\und x,\und z)\le 1$ for all $\und x,\und z\in X^{\infty}$ and $\delta>0$. This together with Fact \ref{fact:fbar_d+e} imply that $0\le \Fbar(\und x,\und z)\le 1$ for every $\und x,\und z\in X^{\infty}$.
\end{rem}

\begin{fact}
The function $\Fbar$ is a pseudometric on $X^\infty$, as well as on $X$.
\end{fact}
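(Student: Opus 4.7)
The plan is to verify the three defining properties of a pseudometric: vanishing on the diagonal, symmetry, and the triangle inequality, with non-negativity already handled in the preceding remark. The first two are essentially bookkeeping. For vanishing on the diagonal, the identity map on $\{0,\ldots,n-1\}$ is an $(n,\delta)$-match of $\und x$ with itself of fit $n$ for every $\delta>0$, hence $\fbar_{n,\delta}(\und x,\und x)=0$ for all $n$ and $\delta$, giving $\fbar_\delta(\und x,\und x)=0<\delta$ and thus $\Fbar(\und x,\und x)=0$. For symmetry, observe that if $\pi$ is an $(n,\delta)$-match of $\und x$ with $\und z$, then $\pi^{-1}$ is an $(n,\delta)$-match of $\und z$ with $\und x$ of the same fit, so $\fbar_{n,\delta}$, $\fbar_{\delta}$, and consequently $\Fbar$ are symmetric.

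The main content is the triangle inequality, and the key lemma I would establish first is a subadditivity estimate at the level of the gap function: for every $\und x,\und z,\und w\in X^\infty$, every $n\in\N$, and all $\delta_1,\delta_2>0$,
\[
\fbar_{n,\delta_1+\delta_2}(\und x,\und w)\le\fbar_{n,\delta_1}(\und x,\und z)+\fbar_{n,\delta_2}(\und z,\und w).
\]
The proof is a composition of matches: let $\pi_1$ be a maximal $(n,\delta_1)$-match of $\und x$ with $\und z$ and $\pi_2$ a maximal $(n,\delta_2)$-match of $\und z$ with $\und w$. Define $\pi=\pi_2\circ\pi_1$ on $\Dom(\pi_1)\cap\pi_1^{-1}(\Dom(\pi_2))$. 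Then $\pi$ is order-preserving, and for $i$ in its domain the triangle inequality for $\rho$ gives $\rho(x_i,w_{\pi(i)})<\delta_1+\delta_2$, so $\pi$ is an $(n,\delta_1+\delta_2)$-match. Its fit equals $|\Ran(\pi_1)\cap\Dom(\pi_2)|$, which, since both sets lie in $\{0,\ldots,n-1\}$, is at least $|\pi_1|+|\pi_2|-n$. Dividing by $n$ and rearranging yields the stated inequality. Taking $\limsup$ in $n$ gives the corresponding statement for $\fbar_\delta$:
\[
\fbar_{\delta_1+\delta_2}(\und x,\und w)\le\fbar_{\delta_1}(\und x,\und z)+\fbar_{\delta_2}(\und z,\und w).
\]

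Given this, the triangle inequality for $\Fbar$ follows from an easy approximation. Fix $\eps>0$ and choose $\delta_1,\delta_2>0$ witnessing the infima up to $\eps/2$, namely $\delta_i<\Fbar(\und x,\und z)+\eps/2$ and $\delta_i<\Fbar(\und z,\und w)+\eps/2$ respectively, with $\fbar_{\delta_1}(\und x,\und z)<\delta_1$ and $\fbar_{\delta_2}(\und z,\und w)<\delta_2$. The display above yields $\fbar_{\delta_1+\delta_2}(\und x,\und w)<\delta_1+\delta_2$, so $\delta_1+\delta_2$ lies in the set whose infimum defines $\Fbar(\und x,\und w)$. Hence $\Fbar(\und x,\und w)\le\delta_1+\delta_2<\Fbar(\und x,\und z)+\Fbar(\und z,\und w)+\eps$, and letting $\eps\to 0$ gives the triangle inequality. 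The statement for $\Fbar$ on $X$ is immediate from $\Fbar(x,z):=\Fbar(\und x_T,\und z_T)$.

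The only subtle step is the composition lemma — specifically, checking that the order-preserving property survives composition (automatic from composing two order-preserving bijections between subsets of $\{0,\ldots,n-1\}$) and getting the correct cardinality lower bound on the intersection $\Ran(\pi_1)\cap\Dom(\pi_2)$. Everything else is formal manipulation of $\limsup$'s and the definition of $\Fbar$ as an infimum.
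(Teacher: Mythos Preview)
Your proof is correct and follows essentially the same approach as the paper: both arguments compose an $(n,\delta_1)$-match with an $(n,\delta_2)$-match to obtain an $(n,\delta_1+\delta_2)$-match, and use the inclusion-exclusion bound $|\Ran(\pi_1)\cap\Dom(\pi_2)|\ge |\pi_1|+|\pi_2|-n$ to control its fit. Your version is in fact slightly more careful than the paper's, since you work with $\delta_i$ witnessing the infimum up to $\eps/2$ rather than setting $\delta_i=\Fbar$ exactly.
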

\begin{defn}
We say that $x,y\in X$ are \emph{orbitally related} and write $x\stackrel{T}{\sim}y$ if $T^i(x)=T^j(y)$ for some $i,j\ge 0$.
\end{defn}
\begin{fact}\label{fact:orbequi}
If $x\stackrel{T}{\sim}x'$ and $y\stackrel{T}{\sim}y'$, then $\Fbar(x,y)=\Fbar(x',y')$.
\end{fact}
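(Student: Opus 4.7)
The plan is to bootstrap from the already-proved shift invariance of $\fbar_\delta$ in the second coordinate (Fact \ref{fact:fbar_Ti}), using that $\fbar_\delta$ is symmetric, to get shift invariance in both coordinates; then observe that orbital relation translates to the existence of matching shifts on the trajectories.

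First I would verify that $\fbar_{n,\delta}$, and hence $\fbar_\delta$, is symmetric: given any $(n,\delta)$-match $\pi$ of $\und x$ with $\und z$, its inverse $\pi^{-1}$ is an $(n,\delta)$-match of $\und z$ with $\und x$ of the same fit. Combining this with Fact~\ref{fact:fbar_Ti}, which gives $\fbar_\delta(\und u,\und v)=\fbar_\delta(\und u,\sigma^i(\und v))$ for every $i\ge 0$, I obtain the analogous identity on the first coordinate, namely $\fbar_\delta(\sigma^i(\und u),\und v)=\fbar_\delta(\und u,\und v)$ for every $i\ge 0$.

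Next, translate the orbital equivalences into shift identities on trajectories. Since $x\stackrel{T}{\sim}x'$, there exist $i,j\ge 0$ with $T^i(x)=T^j(x')$, which means $\sigma^i(\und x_T)=\sigma^j(\und{x'}_T)$ (both sequences equal $(T^{i+k}(x))_{k=0}^\infty=(T^{j+k}(x'))_{k=0}^\infty$). Likewise, $y\stackrel{T}{\sim}y'$ yields $p,q\ge 0$ with $\sigma^p(\und y_T)=\sigma^q(\und{y'}_T)$. Applying the two-sided shift invariance of $\fbar_\delta$ from the previous paragraph, for every $\delta>0$ we have
\[
\fbar_\delta(\und x_T,\und y_T)=\fbar_\delta(\sigma^i(\und x_T),\sigma^p(\und y_T))=\fbar_\delta(\sigma^j(\und{x'}_T),\sigma^q(\und{y'}_T))=\fbar_\delta(\und{x'}_T,\und{y'}_T).
\]

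Finally, since the equality $\fbar_\delta(\und x_T,\und y_T)=\fbar_\delta(\und{x'}_T,\und{y'}_T)$ holds for every $\delta>0$, the sets
\[
\{\delta>0:\fbar_\delta(\und x_T,\und y_T)<\delta\}\text{ and }\{\delta>0:\fbar_\delta(\und{x'}_T,\und{y'}_T)<\delta\}
\]
coincide, hence have the same infimum; by definition this means $\Fbar(x,y)=\Fbar(x',y')$. There is no genuine obstacle here: the only subtlety is recognizing that Fact~\ref{fact:fbar_Ti} together with symmetry suffices to shift either argument independently, after which the result is immediate.
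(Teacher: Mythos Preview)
Your proof is correct and follows essentially the same route as the paper's: reduce to showing $\fbar_\delta(x,y)=\fbar_\delta(x',y')$ for every $\delta>0$, then use Fact~\ref{fact:fbar_Ti} (plus symmetry, which the paper leaves implicit) to shift each argument until the orbital relations $T^i(x)=T^j(x')$ and $T^p(y)=T^q(y')$ can be invoked. The only difference is cosmetic: you shift both coordinates in one chain, while the paper handles $x\to x'$ and $y\to y'$ in two separate steps.
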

\begin{proof} It is enough to prove that $\fbar_\delta(x,y)=\fbar_\delta(x',y')$ for every $\delta>0$. Thus we fix $\delta>0$.
If $x\stackrel{T}{\sim}x'$ and $y\stackrel{T}{\sim}y'$, then there are $i,j,m,n\ge 0$ such that $T^i(x)=T^j(x')$ and $T^m(y)=T^n(y')$. Using Fact \ref{fact:fbar_Ti} repeatedly we have
\[
\fbar_\delta(x,y)=\fbar_\delta(T^i(x),y)=\fbar_\delta(T^j(x'),y)=\fbar_\delta(x',y),
\]
and similarly, $\fbar_\delta(x',y)=\fbar_\delta(x',y')$. 
\end{proof}
By Fact \ref{fact:orbequi} the Feldman-Katok pseudometric depends rather on the separation between forward orbits of given points, than of the points alone.

\subsection{Comparison with the Besicovitch Pseudometric} We note that our results about Feldman-Katok pseudometric generalize those known for Besicovitch pseudometric (see \cite{BCKO, BJK, KLO} for more details). In particular, any sequence of generic points converging in the Besicovitch pseudometric provides and example of $\Fbar$-convergent sequence of measures.
\begin{lem}\label{lem:DBFbar}
If $\und x,\und z\in X^{\infty}$, then $\Fbar(\und x, \und z)\leq D_B'(\und x, \und z)$, where $D_B'$ is given by \eqref{eq:DBprime}.
\end{lem}
\begin{proof} Define
\[
\dbar_{n,\delta}(\und x,\und z)=\frac{1}{n}|\{0\le j< n : \rho(x_j, z_j)\ge\delta\}|.
\]
If $D_B'(\und x, \und z)<\delta$ for some $\delta>0$ then for all $n$ large enough $\dbar_{n,\delta}(\und x, \und z)<\delta$. It follows that there exists an $(n, \delta)$-match of $\und x$ with $\und z$. Therefore $\Fbar(\und x, \und z)\leq D_B'(\und x, \und z)$.
\end{proof}

\section{$\Fbar$-convergence of measures and its properties}\label{sec:FK-convergence}
The relationship between $\Fbar$ and some metric on either $\MTe(X)$ or $\MT(X)$ is not clear, while the pseudometric $\fbar$ defined on $\mathscr A^\infty$ in \eqref{eq:fbar} relates to a metric on $\M_{\sigma }(\mathscr A^\infty)$ given by \eqref{eq:fbar-metric-for-measures}. Nonetheless, using $\Fbar$ we can still introduce a form of ``convergence'' in $\MT(X)$.

\begin{defn}
We say that a sequence of measures $(\mu_n)_{n=1}^\infty\subset\MT(X)$ \emph{converges in $\Fbar$} or \emph{$\Fbar$-converges} to $\mu\in\MT(X)$ if
there exists a sequence of quasi-orbits $(\und x^{(n)})_{n=1}^\infty\subset X^\infty$ with $\om{\und x^{(n)}}=\{\mu_n\}$ such that for some $\mu$-generic quasi-orbit $\und z\in X^\infty$ 
we have $\Fbar(\und z,\und{x}^{(n)})\to 0$ as $n\to\infty$.
\end{defn}

Keeping in mind that we are interested in the GIKN construction, we will examine properties of the $\Fbar$-convergence in a special case, where the quasi-orbits $(\und x^{(n)})_{n=1}^\infty\subset X^\infty$ are actually orbits (for each $n\in\N$, $\und x^{(n)}$ is the orbit of a $\mu_n$-generic point).

\subsection{``Completeness'' of $\Fbar$-convergence} Since $\Fbar$ is a pseudometric on $X^\infty$, given $T\colon X\to X$ such notions as \emph{$\Fbar$-Cauchy sequence} or \emph{$\Fbar$-limit} have obvious meaning. The first important feature of the $\Fbar$-convergence is that it has enough ``completeness'' for our purposes: an $\Fbar$-Cauchy sequence of orbits defines a quasi-orbit which is its $\Fbar$-limit.
\begin{defn}
We say that a sequence of quasi-orbits $(\und x^{(n)})_{n=1}^\infty\subset X^\infty$ is $\Fbar$-Cauchy if for every $\eps>0$ there is $N\in\N$ such that
$\Fbar(\und x^{(k)},\und x^{(\ell)})<\eps$ for $k,\ell\ge N$.
\end{defn}
Repeating the proof of \cite[Proposition 2]{BFK} we get the following fact.

\begin{lem}\label{lem:completness}
If $(x^{(n)})_{n=1}^\infty\subset X$ is an $\Fbar$-Cauchy sequence on $X$, then there
is a quasi-orbit $\und z=(z_j)_{j=0}^\infty\in X^\infty$ such that
$\Fbar(\und{x}^{(n)}_T,\und z)\to 0\text{ as }n\to\infty$. 
\end{lem}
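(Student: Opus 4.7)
The plan is to build the quasi-orbit by a standard telescoping construction: extract a rapidly $\Fbar$-converging subsequence, stitch together progressively longer orbit segments of the $\und x^{(n)}_T$ using the near-matches guaranteed by the $\Fbar$-Cauchy hypothesis, and check both the quasi-orbit condition and the $\Fbar$-convergence by composing the matches.

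First I would pass to a subsequence (still denoted $(x^{(n)})$) with $\Fbar(\und x^{(n)}_T,\und x^{(n+1)}_T)<2^{-n}$. By definition of $\Fbar$ there is $\delta_n<2^{-n}$ with $\fbar_{\delta_n}(\und x^{(n)}_T,\und x^{(n+1)}_T)<\delta_n$, so for every sufficiently large $M$ an $(M,\delta_n)$-match $\pi_n$ of $\und x^{(n)}_T$ with $\und x^{(n+1)}_T$ of fit exceeding $(1-\delta_n)M$ exists. Because the original sequence is $\Fbar$-Cauchy, proving the convergence $\Fbar(\und z,\und x^{(n_k)}_T)\to 0$ along the subsequence implies it for the full sequence by the triangle inequality.

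Next I would choose block lengths $L_n$ so that $L_{n+1}\gg L_n/\delta_n$ and $L_n/n\to\infty$, and set $S_0=0$, $S_n=L_1+\cdots+L_n$. On the block $[S_{n-1},S_n)$ I declare $\und z_j=T^{\,j-S_{n-1}+c_n}(x^{(n)})$ for an offset $c_n$ chosen inductively as follows. Having defined blocks $1,\dots,n$, pick an index $j^{\ast}\in\Dom(\pi_n)$ that lies within the last, say, $\delta_n L_n$ coordinates of the current block (such $j^{\ast}$ exists because the fit of $\pi_n$ exceeds $(1-\delta_n)$ of the block length), and set $c_{n+1}$ so that $T^{c_{n+1}}(x^{(n+1)})=T^{\pi_n(j^{\ast})+1}(x^{(n+1)})$. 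Then $\und z_{j+1}=T(\und z_j)$ within each block, and the set of switch positions is contained in $\{S_1,S_2,\dots\}$, which has asymptotic density zero thanks to $L_n/n\to\infty$; hence $\und z$ is a quasi-orbit.

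For the $\Fbar$-convergence, I fix $k$ and estimate $\fbar_{N,\eta_k}(\und z,\und x^{(k)}_T)$ with $\eta_k=\sum_{j\ge k}\delta_j\le 2^{-k+1}$. On each block $m\ge k$ the composed match $\pi_k\circ\pi_{k+1}\circ\cdots\circ\pi_{m-1}$ pairs a fraction at least $\prod_{j=k}^{m-1}(1-\delta_j)\ge 1-\eta_k$ of the coordinates of the orbit piece of $x^{(m)}$ appearing in block $m$ with coordinates of $\und x^{(k)}_T$, with distance controlled by $\eta_k$ via the triangle inequality applied to $\rho$. Concatenating these block matches along $N=S_m$ and discarding the at most $m$ switching positions (density zero in $N$), I obtain an $(N,\eta_k)$-match of $\und z$ with $\und x^{(k)}_T$ of fit at least $(1-\eta_k)N-m$, which gives $\fbar_{\eta_k}(\und z,\und x^{(k)}_T)\le\eta_k$. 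Fact~\ref{fact:fbar_d+e} then yields $\Fbar(\und z,\und x^{(k)}_T)\le 2\eta_k\le 2^{-k+2}\to 0$.

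The main obstacle is the combinatorial bookkeeping in the third step: I must ensure that the block matches on $[S_{m-1},S_m)$ truly concatenate into a single order-preserving injection whose image lies in an initial segment of $\und x^{(k)}_T$ of length comparable to $N$. This forces the offsets $c_n$ to be chosen so that the image of each composed match $\pi_k\circ\cdots\circ\pi_{m-1}$ restricted to a block lives in the correct position of $\und x^{(k)}_T$; the rapid growth of $L_n$ relative to $1/\delta_n$ provides enough slack to realize this and to absorb the $O(m)$ boundary corrections into the overall $\eta_k$ error budget.
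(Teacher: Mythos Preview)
Your approach---pass to a fast subsequence, concatenate orbit segments to form a quasi-orbit, then verify $\Fbar$-convergence via composed matches---is the paper's approach as well; the paper simply cites \cite{BFK} and records that $\und z$ is given by $z_n=T^n(x^{(n_j)})$ for $m_{j-1}\le n<m_j$. The one substantive difference is that the paper uses \emph{no} alignment offsets: in your notation this is $c_j=S_{j-1}$, so every block is just an initial segment of the corresponding trajectory read at the correct time index.

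This simpler choice dissolves the ``main obstacle'' you flag. With $z_n=T^n(x^{(n_j)})$, to compare $\und z$ with $\und x^{(n_k)}_T$ on $[0,m_J)$ you take, for each block $i\ge k$, a single $(m_i,\eta_k)$-match $\sigma_i$ between $\und x^{(n_i)}_T$ and $\und x^{(n_k)}_T$ (this exists directly from $\Fbar(x^{(n_i)},x^{(n_k)})<\eta_k$, no composition needed). Since any order-preserving $(N,\eta)$-match of fit $\ge(1-\eta)N$ displaces every index by at most $\eta N$, restricting $\sigma_i$ to domain indices in $[m_{i-1}+\eta_k m_i,m_i)$ forces the range into $[m_{i-1},m_i)$. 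These restricted matches therefore have disjoint, correctly ordered domains \emph{and} ranges, so they concatenate into a single order-preserving map with fit at least $m_J-m_{k-1}-2\eta_k\sum_{i\le J}m_i$; choosing $m_i$ to grow geometrically makes this $\ge(1-C\eta_k)m_J$ for large $J$. Your bespoke offsets $c_n$ (chosen from $\pi_n$ alone) are not needed, and as written it is not clear they secure the concatenation simultaneously for all $k$; also note the composition should read $\pi_{m-1}\circ\cdots\circ\pi_k$ (or its inverse), not $\pi_k\circ\cdots\circ\pi_{m-1}$.
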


In fact, the proof of Lemma \ref{lem:completness} yields that there are $0=m_0<m_1<m_2<\ldots$
and a subsequence $(x^{(n_j)})_{j=1}^\infty$ of $(x^{(n)})_{n=1}^\infty$ such that
$\dbar(\{m_j:j\ge 0\})=0$ and $\und z$ is given by $z_n=T^n
(x^{(n_j)})$ for $m_{j-1}\le n <m_{j}$.

We do not know whether under the assumptions of Lemma \ref{lem:completness} there is a point whose orbit is a $\Fbar$-limit of the $\Fbar$-Cauchy sequence. We can prove it only by imposing an extra assumption on $T\colon X\to X$.  It turns out that the \emph{asymptotic average shadowing property} is a sufficient requirement. Let us recall its definition. 

\begin{defn} A sequence $\und{z}=(z_j)_{j=0}^\infty\in X^{\infty}$ is \emph{asymptotic average pseudoorbit} for $T\colon X\to X$ if
\[
D_B(T(\und{z}),\und{z})=\lim\limits_{N\to\infty}\frac{1}{N}\sum_{j=0}^{N-1} \rho(T(z_j),z_{j+1})=0.
\]
We say that $\und z=(z_j)_{j=0}^\infty\in X^{\infty}$ is \emph{asymptotically shadowed in average} by $x\in X$, if
\begin{equation}\label{eq:AASP}
D_B(\und{x}_T,\und{z})=\lim\limits_{N\to\infty}\frac{1}{N}\sum_{j=0}^{N-1}\rho\big(T^j(x),z_j\big)=0.    
\end{equation}
A system $(X,T)$ has the \emph{asymptotic average shadowing property} if every asymptotic average pseudo-orbit of $T$ is asymptotically shadowed in average by some point. 
\end{defn}

Note that if $\und{x}_T$ and $\und{z}$ satisfy \eqref{eq:AASP}, then $D_B'(\und{x}_T,\und{z})=\Fbar(\und{x}_T,\und{z})=0$ as well, see \eqref{eq:DBprime} and Lemma \ref{lem:DBFbar}. The asymptotic average shadowing property was introduced by Gu \cite{Gu} and is implied by most of the versions of the specification property considered in the literature, see \cite{CT,KKO,KLO2,KLO}.

\begin{cor}
If $(X,T)$ satisfies the asymptotic average shadowing property, then $\Fbar$ is a~complete pseudometric on $X$.
\end{cor}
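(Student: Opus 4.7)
The plan is to combine Lemma~\ref{lem:completness} with the asymptotic average shadowing property in order to upgrade any $\Fbar$-Cauchy sequence in $X$ to one that $\Fbar$-converges to an actual point of $X$. Given an $\Fbar$-Cauchy sequence $(x^{(n)})_{n=1}^\infty\subset X$, Lemma~\ref{lem:completness} supplies a quasi-orbit $\und z=(z_j)_{j=0}^\infty\in X^\infty$ with $\Fbar(\und{x}^{(n)}_T,\und z)\to 0$. Hence it is enough to produce a single point $x\in X$ with $\Fbar(\und{x}_T,\und z)=0$, for then the triangle inequality for $\Fbar$ immediately forces $\Fbar(\und{x}^{(n)}_T,\und{x}_T)\to 0$.

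The key step is to observe that every quasi-orbit is automatically an asymptotic average pseudo-orbit. Writing $A=\{j\ge 0:z_{j+1}\neq T(z_j)\}$, the definition of a quasi-orbit gives $\bar d(A)=0$. Since $X$ is compact, $D:=\operatorname{diam}(X)<\infty$, and because $\rho(T(z_j),z_{j+1})=0$ off $A$ we estimate
\[
\frac{1}{N}\sum_{j=0}^{N-1}\rho(T(z_j),z_{j+1})\le D\cdot\frac{|A\cap\{0,\ldots,N-1\}|}{N}\xrightarrow{N\to\infty}0,
\]
so $\und z$ is an asymptotic average pseudo-orbit in the sense of the definition above.

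Invoking the asymptotic average shadowing property now yields a point $x\in X$ with $D_B(\und{x}_T,\und z)=0$, and, as the parenthetical remark in the definition records, this entails $\Fbar(\und{x}_T,\und z)=0$. Combined with the triangle inequality,
\[
\Fbar(\und{x}^{(n)}_T,\und{x}_T)\le \Fbar(\und{x}^{(n)}_T,\und z)+\Fbar(\und z,\und{x}_T)=\Fbar(\und{x}^{(n)}_T,\und z)\xrightarrow{n\to\infty}0,
\]
which establishes the completeness of $\Fbar$. There is no real obstacle here: the whole argument is a clean three-step combination of Lemma~\ref{lem:completness}, the bounded-diameter observation that promotes a quasi-orbit to an asymptotic average pseudo-orbit, and the hypothesis that $(X,T)$ has AASP. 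The only point one has to keep in mind is that Lemma~\ref{lem:completness} genuinely delivers a quasi-orbit (not merely an $\Fbar$-limit in $X^\infty$), which is exactly the content of the remark following that lemma about the construction $z_n=T^n(x^{(n_j)})$ for $m_{j-1}\le n<m_j$ with $\bar d(\{m_j\})=0$.
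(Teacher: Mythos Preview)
Your proof is correct and follows essentially the same approach as the paper: use Lemma~\ref{lem:completness} to obtain a quasi-orbit $\und z$, note that a quasi-orbit is an asymptotic average pseudo-orbit, apply AASP to find $x$ with $D_B(\und{x}_T,\und z)=0$ (hence $\Fbar(\und{x}_T,\und z)=0$), and conclude via the triangle inequality. Your version is in fact more explicit, as you spell out the bounded-diameter estimate showing that a quasi-orbit is an asymptotic average pseudo-orbit, whereas the paper simply asserts this.
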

\begin{proof}
Fix an $\Fbar$-Cauchy sequence $(x^{(n)})_{n=1}^\infty\subset X$. Note that the quasi-orbit $\und z\in X^{\infty}$ provided by Lemma \ref{lem:completness} is an asymptotic average pseudoorbit and satisfies $\Fbar(\und{x}_T,\und z)=0$. 
Pick $x\in X$ which asymptotically shadows in average $\und z$. Then
\[
\Fbar(x,x^{(n)})\leq \Fbar(\und{x}_T,\und z)+\Fbar(\und z,\und{x}^{(n)}_T)\to 0\quad\text{as }n\to\infty.
\]
Therefore $x$ is an $\Fbar$-limit of $(x^{(n)})_{n=1}^\infty$ in $X$.
\end{proof}

It turns out that the set of generic quasi-orbits is $\Fbar$-closed. More is true: $\om{\und x}$ depends $\Fbar$-continuously on
$\und x$ when we consider $\om{\und x}$ as a point in the space of nonempty closed subsets of $\M(X)$ endowed with the
Hausdorff metric $D_H$ induced by the Prokhorov metric $D_P$ (the latter space is called the hyperspace of $\M(X)$).


\begin{fact}\label{H}
If $\Fbar(\und x,\und z)<\eps$ for some 
$\eps>0$, then $D_H(\om{\und x},\om{\und z})<\eps$, where $D_H$ is the Hausdorff metric on the hyperspace of $\M(X)$ endowed with $D_P$.
\end{fact}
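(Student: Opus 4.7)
The plan is to chase through the definitions: from $\Fbar(\und x,\und z)<\eps$ extract a witness $\delta<\eps$ giving a Prokhorov bound between the empirical measures $\m(\und x,n)$ and $\m(\und z,n)$ for all large $n$ via Fact~\ref{fact:fbar_implies_di}, and then pass to weak$^*$ limits along subsequences to realize arbitrary elements of $\om{\und x}$ and $\om{\und z}$.

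First, by the definition of $\Fbar$ as an infimum, there exists $\delta$ with $\delta<\eps$ and $\fbar_\delta(\und x,\und z)<\delta$. By the definition of $\fbar_\delta$ as a $\limsup$, there is $N\in\N$ such that $\fbar_{n,\delta}(\und x,\und z)<\delta$ for all $n\ge N$. Applying Fact~\ref{fact:fbar_implies_di} (with the role of $\eps$ there played by $\delta$) yields
\[
D_P\bigl(\m(\und x,n),\m(\und z,n)\bigr)<\delta\quad\text{for every }n\ge N.
\]

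Next I verify both Hausdorff inequalities. Fix $\mu\in\om{\und x}$; by definition there is a subsequence $n_k\to\infty$ with $\m(\und x,n_k)\to\mu$ in the weak$^*$ topology. Since $\M(X)$ is weak$^*$ sequentially compact, I pass to a further subsequence (still denoted $n_k$) so that $\m(\und z,n_k)\to\nu$ for some $\nu\in\M(X)$; clearly $\nu\in\om{\und z}$. Because $D_P$ metrizes weak$^*$ convergence, it is jointly continuous, and therefore
\[
D_P(\mu,\nu)=\lim_{k\to\infty}D_P\bigl(\m(\und x,n_k),\m(\und z,n_k)\bigr)\le \delta<\eps.
\]
By an entirely symmetric argument, every $\nu\in\om{\und z}$ lies within Prokhorov distance $\delta$ of some $\mu\in\om{\und x}$. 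Hence $D_H(\om{\und x},\om{\und z})\le \delta<\eps$, proving the fact.

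The only genuine subtlety is the diagonalization step: the termwise Prokhorov bound does not by itself guarantee that a weak$^*$ limit of $\m(\und x,n_k)$ lies close to a weak$^*$ limit of $\m(\und z,n_k)$, so one must extract a common subsequence along which both empirical sequences converge before applying joint continuity of $D_P$. Everything else is just translating the definitions.
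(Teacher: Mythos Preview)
Your proof is correct and follows essentially the same approach as the paper's: extract a witness $\delta<\eps$, use Fact~\ref{fact:fbar_implies_di} to get a Prokhorov bound on empirical measures for all large $n$, and pass to limits. The paper compresses the final step into the single sentence ``This implies that $D_H(\om{\und x},\om{\und z})<\eps$'', whereas you spell out the subsequence extraction and the continuity argument explicitly; this is the only difference.
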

\begin{proof}
If $\Fbar(\und x,\und z)<\eps$, then for some $\Fbar(\und x,\und z)\le \delta <\eps$ we have
$\fbar_\delta(\und x,\und z)<\delta<\eps$. Therefore $\fbar_{n,\delta}(\und x,\und z)<\delta$ for all $n$ large enough and
$\di(\m(\und x, n), \m(\und z, n))< \eps$ by Fact \ref{fact:fbar_implies_di}.
This implies that 
$D_H(\om{\und x},\om{\und z})<\eps$. 
\end{proof}

Although the proof of the following fact is short, its importance justifies calling it a theorem.

\begin{thm}\label{thm:weak}
Let $(\und x^{(n)})_{n=1}^\infty\subset X^{\infty}$ be such that for each $n\in\N$ there is $\mu_n\in\M(X)$
with $\und{x}^{(n)}\in\Gen(\mu_n)$. If $\und x\in X^{\infty}$ and $\Fbar(\und x,\und x^{(n)})\to 0$ as $n\to\infty$, then there exists $\mu\in\M(X)$ such that $\mu_n\to\mu$ as $n\to\infty$ in $\M(X)$ and $\und x\in\Gen(\mu)$.

\end{thm}
\begin{proof}
By Fact~\ref{H}, $\Fbar(\und x,\und x^{(n)})\to 0$ as $n\to\infty$ implies that $\om{\und x^{(n)}}\to\om{\und x}$ as $n\to\infty$ in the hyperspace of $\M(X)$. 
Since $\und x^{(n)}$ is generic for $\mu_n$, we have $\om{\und x^{(n)}}=\{\mu_n\}$ for $n\in\N$. The family of all singletons is closed in the hyperspace and homeomorphic with $\M(X)$. Thus $\om{\und x}$ must also be a singleton, that is $\om{\und x}=\{\mu\}$ for some $\mu\in\M(X)$ and $\mu_n\to\mu$ as $n\to\infty$ in
$\M(X)$.
\end{proof}
An immediate consequence of Lemma \ref{lem:completness} and Theorem \ref{thm:weak} is the following.
\begin{cor}\label{cor:completeness}
An $\Fbar$-Cauchy sequence of generic points uniquely determines an invariant measure.
\end{cor}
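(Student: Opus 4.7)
The plan is to combine the two main results already established: the completeness-type Lemma \ref{lem:completness} for $\Fbar$-Cauchy sequences and the sequential continuity Theorem \ref{*sl}. Suppose $(x^{(n)})_{n=1}^\infty\subset X$ is an $\Fbar$-Cauchy sequence of generic points, with $x^{(n)}\in\Gen(\mu_n)$ for some $\mu_n\in\MT(X)$. First I would apply Lemma \ref{lem:completness} to the trajectories $\und{x}^{(n)}_T$ to obtain a quasi-orbit $\und z\in X^\infty$ with $\Fbar(\und{x}^{(n)}_T,\und z)\to 0$ as $n\to\infty$.

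Next, because $\und{x}^{(n)}_T\in\Gen(\mu_n)$ for every $n$, Theorem \ref{*sl} immediately produces a measure $\mu\in\M(X)$ such that $\mu_n\to\mu$ weakly$^*$ and $\und z\in\Gen(\mu)$. The remark preceding the definition of quasi-orbit (just after Definition of quasi-orbit) says that any quasi-orbit generates only $T$-invariant measures, so $\hat\omega(\und z)=\{\mu\}\subset\MT(X)$. Hence the $\Fbar$-Cauchy sequence yields an invariant measure $\mu$.

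For uniqueness, observe that $\mu$ is the weak$^*$ limit of $(\mu_n)_{n=1}^\infty$, and weak$^*$ limits in the compact metrizable space $\M(X)$ are unique; in particular $\mu$ does not depend on the choice of the quasi-orbit $\und z$ from Lemma \ref{lem:completness}. Alternatively, if $\und z$ and $\und z'$ were two $\Fbar$-limits of the same Cauchy sequence, then $\Fbar(\und z,\und z')=0$ by the triangle inequality, and Fact \ref{H} would give $D_H(\hat\omega(\und z),\hat\omega(\und z'))<\eps$ for every $\eps>0$, so the singletons $\{\mu\}$ and $\{\mu'\}$ must coincide.

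I do not anticipate a real obstacle here: the corollary is essentially a bookkeeping consequence of Lemma \ref{lem:completness} and Theorem \ref{*sl}. The only point requiring care is making sure one states the invariance via the quasi-orbit property of $\und z$ rather than trying to derive it directly from the $\mu_n$, and spelling out that ``uniquely determine'' refers to independence from the choice of the interpolating quasi-orbit.
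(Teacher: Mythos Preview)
Your proposal is correct and matches the paper's approach exactly: the corollary is stated there as ``an immediate consequence of Lemma \ref{lem:completness} and Theorem \ref{*sl},'' which is precisely the two-step argument you give. Your additional remarks on uniqueness via Fact \ref{H} are fine but more than the paper spells out.
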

Corollary \ref{cor:completeness} allows us to justify the correctness of the next definition. 
\begin{defn}\label{def:fbar-lim}
An invariant measure determined by a $\Fbar$-Cauchy sequence of generic points is called the \emph{$\Fbar$-limit} of the corresponding sequence of measures.
\end{defn}

\section{Every GIKN sequence is $\Fbar$-Cauchy}\label{sec:GIKN-FK-Cauchy}

All examples of $(\gamma,\kappa)$-good approximations found in the literature support the conjecture that each pair of periodic orbits satisfying Definition \ref{def:good-approx} is always $(\gamma+(1-\kappa))$-close with respect to $\Fbar$. However, the definition of $(\gamma,\kappa)$-good approximation  does not ensure that the $(\gamma,\kappa)$-projection $\psi$ appearing in Definition \ref{def:good-approx} is a match as in Definition \ref{def:match}. This is because a match must be order-preserving, and Definition \ref{def:good-approx} does not stipulate this requirement from a projection. Therefore, the following technical lemma is necessary.

\begin{lem}\label{lem:GIKNCauchy}
If a $T$-periodic orbit $\Gamma$ is a $(\gamma,\kappa)$-good approximation of a $T$-periodic orbit $\Lambda$, then for any choice of $x\in\Gamma$ and $z\in\Lambda$ one has $\Fbar(x,z)<\gamma+(1-\kappa)$.
\end{lem}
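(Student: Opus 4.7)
My plan is to establish first the auxiliary bound $\fbar_\gamma(\und{x}_T,\und{z}_T)\le 1-\kappa$, from which Lemma~\ref{lem:GIKNCauchy} follows by a strictness upgrade. Set $P=|\Gamma|$, $Q=|\Lambda|$ and $L=\operatorname{lcm}(P,Q)$; since $\psi$ is constant-to-one, $|\Delta|=cQ$ for some integer $c\ge 1$, and hence $Q\le P$. By Fact~\ref{fact:orbequi} I may replace $x$ and $z$ by any orbit-equivalent points, so assume $x\in\Delta$ and $z=\psi(x)$. Both $\und{x}_T$ and $\und{z}_T$ are $L$-periodic, so by Fact~\ref{fact:f_delta_for_per} the task reduces to constructing, for every large $k\in\N$, an $(kL,\gamma)$-match $\pi_k$ with $|\pi_k|\ge\kappa\cdot kL-o(kL)$ as $k\to\infty$.

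For the construction let $G_k=\{i\in[0,kL)\colon T^i(x)\in\Delta\}$; by periodicity $|G_k|=\kappa'\cdot kL$ where $\kappa':=|\Delta|/P\ge\kappa$. For each $i\in G_k$ the shadowing condition provides the length-$Q$ block $[i,i+Q)$ on the $x$-side, which is pointwise $\gamma$-close to the length-$Q$ block $[\tilde m(i)+\ell Q,\tilde m(i)+\ell Q+Q)$ on the $z$-side for any $\ell\ge 0$, where $\tilde m(i)\in\{0,\ldots,Q-1\}$ satisfies $T^{\tilde m(i)}(z)=\psi(T^i(x))$. I would select good positions $i_1<i_2<\cdots$ greedily with consecutive gaps of at least $Q$, which makes the $x$-side blocks $[i_s,i_s+Q)$ pairwise disjoint. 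A short density count (using that every $P$-window of $\und{x}_T$ contains $|\Delta|=cQ$ good positions) shows that this selection produces at least $c\cdot kL/P$ blocks in $[0,kL)$, covering $\kappa'\cdot kL$ positions on the $x$-side. The $z$-side images $[\tilde m(i_s)+\ell_s Q,\tilde m(i_s)+\ell_s Q+Q)$ are then placed order-preservingly by choosing the shifts $\ell_s$ minimally subject to non-overlap; since $\tilde m(i_{s+1})-\tilde m(i_s)\in(-Q,Q)$, each increment $\ell_{s+1}-\ell_s$ is $1$ or $2$, and a book-keeping argument confirms that for $k$ large enough the block images fit inside $[0,kL)$ with only $o(kL)$ loss. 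This gives $\fbar_{kL,\gamma}(\und{x}_T,\und{z}_T)\le 1-\kappa+o(1)$, whence $\fbar_\gamma(\und{x}_T,\und{z}_T)\le 1-\kappa$.

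For the strict inequality in the lemma I exploit finiteness: $\Delta\times\{0,\ldots,Q-1\}$ is finite and the shadowing inequalities $\rho(T^j(y),T^j(\psi(y)))<\gamma$ are strict, so $\gamma^*:=\max_{y\in\Delta,\,0\le j<Q}\rho(T^j(y),T^j(\psi(y)))<\gamma$. Re-running the construction with $\gamma^*$ in place of $\gamma$ yields $\fbar_{\gamma^*}(\und{x}_T,\und{z}_T)\le 1-\kappa$, so setting $\delta:=\gamma^*+(1-\kappa)$, Fact~\ref{fact:fbar_mono} gives $\fbar_\delta\le\fbar_{\gamma^*}\le 1-\kappa<\delta$, and then Fact~\ref{fact:fbar_d+e} yields $\Fbar(x,z)\le\gamma^*+(1-\kappa)<\gamma+(1-\kappa)$, as claimed.

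The main obstacle is controlling the packing of block images on the $z$-side. Because each shift $\ell_s$ may need to grow by $2$ rather than $1$ (namely when $\tilde m(i_s)>\tilde m(i_{s+1})$), the naive block construction packs at most $kL/(2Q)$ disjoint length-$Q$ blocks into $[0,kL)$, which only delivers fit $\ge\kappa\cdot kL$ when $\kappa\le 1/2$. Extending the argument to all $\kappa\in(0,1]$ requires a more delicate hybrid construction, supplementing the block matches with single-point matches in the dense-good regions (where many goods cluster within a single $Q$-window) and exploiting the coincidence of empirical $\Lambda$-label frequencies on both sides—namely $\kappa'/Q$ per label on the $x$-side versus $1/Q$ on the $z$-side. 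This label-matching/Hall-type book-keeping, together with the asymptotic averaging furnished by Fact~\ref{fact:f_delta_for_per}, constitutes the technical heart of the proof.
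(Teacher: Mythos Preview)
Your setup and reductions are on the right track and closely mirror the paper: using Fact~\ref{fact:orbequi} to pick $x\in\Delta$ and $z=\psi(x)$, reducing via Fact~\ref{fact:f_delta_for_per} to building good matches along multiples of the periods, and handling the strict inequality via the finiteness argument $\gamma^*<\gamma$ are all sound. The greedy selection on the $x$-side with spacing $\ge Q$ is also correct, and your count that it captures at least $c\cdot kL/P$ starting points is fine.

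The genuine gap is exactly where you say it is: the packing on the $z$-side. Your final paragraph concedes that the block construction only works for $\kappa\le 1/2$ and then gestures at a ``hybrid construction'' with ``label-matching/Hall-type book-keeping'' without carrying it out. This is not a proof; it is a description of a difficulty together with a hope. In particular, the single-point/label-frequency idea you sketch does not obviously interact well with the order-preserving requirement on $\pi$, and no mechanism is given for controlling the claimed $o(kL)$ loss. As written the argument simply does not cover $\kappa>1/2$.

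The paper sidesteps the packing problem with one idea you are missing: build the match so that $\pi(j)\le j$ at every stage. Concretely, work with a $(p+q,\gamma)$-match (where $p$ is a common multiple of the periods and $q=|\Lambda|$, the extra $q$ giving just enough buffer), process the good positions $\theta(0)<\theta(1)<\cdots<\theta(r-1)$ in order, and after handling $\theta(s)$ fill in $\pi$ on the length-$q$ segment starting there. When you pass to the next unmatched good position $\theta(t)$ (necessarily $\theta(t)\ge\theta(s)+q$), use the $q$-periodicity of $\und z_T$ to locate the label $\psi(y_t)$ within at most $q$ steps of the current $z$-position. Since the $x$-side advances by at least $q$ between these landmarks while the $z$-side advances by at most $q$, the invariant $\pi(\theta(t))\le\theta(t)$ is preserved, so the image of $\pi$ never runs past $p+q$. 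Each transition removes at most $q-1$ previously matched indices and adds $q$ new ones, so the final fit satisfies $|\pi|\ge r\ge\kappa p$ with no restriction on $\kappa$. This inductive ``$z$-side lags behind $x$-side'' invariant is the missing ingredient in your approach.
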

\begin{proof}
It follows from Fact~\ref{fact:orbequi} that it is enough to find $x_0\in\Gamma$ and $z_0\in\Lambda$ such that $\Fbar(x_0,z_0)<\gamma+(1-\kappa)$.

Let $\psi\colon\Delta\to\Lambda$ be the $(\gamma,\kappa)$-projection from $\Gamma$ to $\Lambda$.
Pick any $x_0\in\Delta$ and let $z_0=\psi(x_0)\in\Lambda$.
By Fact \ref{fact:fbar_d+e} it is enough to show that $\fbar_\gamma(x_0,z_0)<1-\kappa$. Define $q=|\Lambda|$.
Furthermore, since $x_0$ and $z_0$ are periodic, we conclude from Fact \ref{fact:f_delta_for_per} and Fact \ref{fact:fbar_n+q} that it is sufficient to find for any multiple $p$ of $|\Gamma|$ and $|\Lambda|$ a $(p+q,\gamma)$-match $\pi$ of $x_0$ with $z_0$ such that $|\pi|\ge \kappa p$. Let $p$ be a multiple of $|\Gamma|$ and $|\Lambda|$. For $j\ge 1$ define $x_j=T^j(x_0)$ and $z_j=T^j(z_0)$. Note that $(z_j)_{j=0}^\infty$ is a $q$-periodic sequence. We will abuse the notation and treat $\{x_0,\ldots,x_{p-1}\}$, $\{z_0,\ldots,z_{p-1}\}$ as sets with $p$ elements, still denoted by $\Gamma$ and $\Lambda$. Furthermore we extend $\psi$ to a function from $\Delta\subset \{x_0,\ldots,x_{p-1}\}$ to $\{z_0,\ldots,z_{p-1}\}$ where $|\Delta|=r$ and $r/p\ge \kappa$.



We are going to define the match $\pi$ by performing an inductive construction with at most $|\Delta|$ steps. At each step we will extend the domain of $\pi$ by at least one element. Enumerate elements of $\Delta$ as $y_0,\ldots,y_{r-1}$ in such a way that the order induced by this indexing of $\Delta$ coincides with the one induced by enumerating the elements of $\Gamma$ as $x_0,\ldots,x_{p-1}$. By definition $x_0=y_0$.
Let $\theta\colon \{0,\ldots,r-1\}\to\{0,1,\ldots,p-1\}$ be a function such that $\theta(s)$ for $0\le s< r$ is the position of $y_s$ in  the sequence $x_0,\ldots,x_{p-1}$, that is, $\theta(s)=j$ if and only if $y_s=x_j$.
We begin with $\Dom(\pi)=\{0\}$ and $\pi(0)=0$. 
By definition, $z_{\pi(0)}=\psi(x_0)=z_0$, hence $\rho(x_0,z_{\pi(0)})<\gamma$, and $\pi$ is a $(p+q,\gamma)$-match.

Now assume that we have already performed some number of steps of our construction and we have obtained a $(p+q,\gamma)$-match $\pi$
such that $0\le s < r$ is the largest integer satisfying: $|\pi|\ge s+1$, $\theta(s)\in\Dom(\pi)$, $\pi(\theta(s))\le \theta(s)$ and $z_{\pi(\theta(s))}=\psi(y_s)$. It follows from our construction that $s+1$ is always greater than or equal to the number of steps performed.
We can extend $\pi$ by setting $\pi(\theta(s)+i)=\theta(s)+i$ 
for $0< i <q$. 
Now, there are two cases to consider: either the domain of $\pi$ contains $\{\theta(0),\ldots,\theta(r-1)\}$ and we are done or there is $s< t< r$ which is the smallest integer such that $\pi$ is not defined at $\theta(t)$. Clearly, in the latter case $\theta(t)-\theta(s)\ge q$. Since
$(z_j)_{j=0}^\infty$ is a $q$-periodic sequence there exists $1\le \ell \le q$ such that $z_{\theta(s)+\ell}=\psi(y_t)$, and $\pi(\theta(s))<\theta(s)+\ell\le \theta(t)$. We want to set $\pi(\theta(t))=\theta(s)+\ell$, but to keep $\pi$ increasing we need to remove first at most $q-1$ elements from $\Dom(\pi)$ as defined so far, namely those in $\Dom(\pi)\cap\{\theta(s)+\ell,\ldots,\theta(s)+q-1\}$.  But then we can set $\pi(\theta(t)+i)=\theta(t)+i=\theta(s)+\ell+i$ for $1\le i <q$, because
\[
z_{\pi(\theta(t)+i)}=T^i(z_{\theta(t)})\text{ and }\rho(T^i(z_{\theta(t)}),T^i(\psi(y_t)))<\gamma.
\]
For our new $\pi$ we see that the largest integer $s$ in $\{0,1,\ldots,r-1\}$ satisfying $|\pi|\ge s+1$, $\theta(s)\in\Dom(\pi)$, $\pi(\theta(s))\le \theta(s)$ and $z_{\pi(\theta(s))}=\psi(y_s)$ is larger than or equal to $t$. Thus in a finite number of steps our procedure will produce a $(p+q,\gamma)$-match $\pi$ with $|\Dom(\pi)|\ge\kappa p$.
\end{proof}
As a consequence we see that a royal measure 
is the $\Fbar$-limit of periodic measures, which immediately implies \cite[Lemma 2.5]{BDG} except conclusions about ergodicity and support, see also \cite[Lemma 2]{GIKN}.

\begin{thm}\label{thm:GIKNmain}
If $(\Gamma_n)_{n\in\N}$ is a GIKN-sequence of $T$-periodic orbits and  $\mu_n$ denotes the ergodic measure supported on $\Gamma_n$ for $n\in\N$, then for any choice $x_n\in\Gamma_n$ the sequence $(x_n)_{n=1}^\infty$ is $\Fbar$-Cauchy and $\mu_n$ $\Fbar$-converges to some $\mu$.
In particular, a royal measure is a $\Fbar$-limit of periodic measures.
\end{thm}

\section{$\Fbar$-limit of ergodic measures is ergodic}\label{sec:FK-ergodic}

We are going to show that an $\Fbar$-limit of a sequence of ergodic measures must be ergodic. To this end we first present a criterion for ergodicity of a measure generated by a quasi-orbit. We obtain it by an easy adaptation of an analogous criterion for orbits given by Oxtoby \cite{Oxtoby}.

\subsection{Auxiliary terminology and results} For $k\in\N$, $\und x=(x_j)_{j=0}^\infty\in X^{\infty}$ and $\phi\in\mathcal{C}(X)$ let $A_k(\phi,\und x)$ denote the Birkhoff average along $x_0,x_1,\ldots,x_{k-1}$, that is,
\[
A_k(\phi,\und x)=\frac{1}{k}\sum_{j=0}^{k-1}\phi(x_j)=\int_X \phi\,d\m(\und x,k).
\]
For $x\in X$ we write $A_k(\phi,x)$ for the Birkhoff average along orbit segment of length $k$, that is $A_k(\phi,x)=A_k(\phi,\und{x}_T)$.
Recall that a sequence $\und x\in X^{\infty}$ (a point $x\in X$, respectively) is generic for some measure $\mu$ if and only if for every $\phi\in\mathcal C(X)$ the sequence $A_k(\phi,\und x)$ (respectively, $A_k(\phi,x)$) converges as $k\to\infty$. We denote the corresponding limit by $\phi^*(\und x)$ (respectively, $\phi^*(x)$). It is easy to see that for any $\ell\in\N$ we have $\phi^*(\und x)=\phi^*(\sigma^\ell(\und x))$, respectively $\phi^*(x) =\phi^*(T^\ell(x))$.
For a generic sequence $\und x$ we put
\[
A^*_k(\phi,x_\ell)=|A_k(\phi,(x_j)_{j=\ell}^{\infty})-\phi^*((x_j)_{j=\ell}^{\infty})|=|A_k(\phi,\sigma^\ell(\und x))-\phi^*(\und x)|.
\]
Furthermore, for a generic point $x$ and $\ell\in\N$ we define
\[
A^*_k(\phi,T^\ell(x))=|A_k(\phi,T^\ell(x))-\phi^*(x)|.
\]

Fix $\alpha>0$, $k\in\N$, and $\phi\in\mathcal{C}(X)$. We say that $\ell\ge 0$ is a \emph{starting point of an $(\alpha,\phi)$-bad $k$-segment} in
a generic sequence $\und{x}=(x_j)_{j=0}^\infty$ if $A_k^*(\phi,\sigma^\ell(\und{x}))>\alpha$.

The following characterization of ergodic sequences slightly generalizes the one presented by Oxtoby in~\cite[Section 4]{Oxtoby}.
It states that a generic sequence $\und{x}=(x_j)_{j=0}^\infty$ generates an ergodic measure if for every $\phi\in\mathcal{C}(X)$ and $\alpha>0$ the upper density of the set of integers $\ell$ being starting points for an $(\alpha,\phi)$-bad $k$-segments converges to zero as $k$ goes to $\infty$. Replacing $\und{x}$ by $\und{x}_T$ we obtain a criterion for ergodicity of a measure given by a generic point due to Oxtoby. We omit the proof as it follows the same lines as in~\cite{Oxtoby}.

\begin{thm}[Oxtoby's Criterion]\label{thm:Oxtoby}
Let a quasi-orbit $\und z=(z_j)_{j=0}^\infty\in X^{\infty}$ be generic for some $\mu\in\M_T(X)$. 
Then $\mu$ is ergodic if and only if for every $\phi\in\mathcal{C}(X)$ and $\alpha>0$ we have
\[\bar d\big(\{\ell\ge 0\,:\,|A_k(\phi,\sigma^\ell(\und z))-\phi^*(\und z)|>\alpha\}\big)\to 0\text{ as }k\to\infty.\]
\end{thm}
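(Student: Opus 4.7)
The plan is to reduce Oxtoby's criterion for quasi-orbits to its classical form for genuine orbits by exploiting the quasi-orbit condition, and then to apply Birkhoff's ergodic theorem together with Portmanteau's theorem. The reduction runs as follows. For $\phi\in\mathcal{C}(X)$ and $k\in\N$ the map $g_k\colon X\to\R$ given by $g_k(x):=A_k(\phi,x)$ is continuous. Since $\und z$ is a quasi-orbit, the set $E:=\{n\ge 0:z_{n+1}\ne T(z_n)\}$ has $\bar d(E)=0$, so for each fixed $k$
\[
B_k^c:=\{\ell\ge 0:z_{\ell+j}\ne T^j(z_\ell)\text{ for some }0\le j<k\}\subset\bigcup_{j=0}^{k-2}(E-j),
\]
which, as a finite union of density-zero sets, has upper density $0$. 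For $\ell\in B_k$ the identity $A_k(\phi,\sigma^\ell(\und z))=g_k(z_\ell)$ holds, so the set in the density condition differs by a density-zero set from $\{\ell:|g_k(z_\ell)-\phi^*(\und z)|>\alpha\}$. This reduces both implications to statements about the continuous functions $g_k$ sampled along $\und z$.

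For the forward implication assume $\mu$ is ergodic and set $c:=\phi^*(\und z)=\int\phi\,d\mu$. Birkhoff's theorem gives $g_k\to c$ pointwise $\mu$-a.e., and since $\|g_k\|_\infty\le\|\phi\|_\infty$, dominated convergence yields $\int|g_k-c|\,d\mu\to 0$ as $k\to\infty$. The function $|g_k-c|$ is continuous, so genericity of $\und z$ gives
\[
\lim_{n\to\infty}\frac{1}{n}\sum_{\ell=0}^{n-1}|g_k(z_\ell)-c|=\int|g_k-c|\,d\mu,
\]
and Markov's inequality bounds $\bar d(\{\ell:|g_k(z_\ell)-c|>\alpha\})$ by $\alpha^{-1}\int|g_k-c|\,d\mu$, which tends to $0$. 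Combined with the reduction, this proves the density condition.

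For the reverse implication I would argue contrapositively. If $\mu$ is not ergodic, then there is a $T$-invariant set $A$ with $0<\mu(A)<1$; approximating $\chi_A$ in $L^2(\mu)$ by a continuous function and using that conditional expectation onto the invariant $\sigma$-algebra is an $L^2$-contraction yields $\phi\in\mathcal{C}(X)$ for which the Birkhoff limit $\phi^{**}:=\lim_k g_k$ (well-defined $\mu$-a.e.) is not $\mu$-a.e.\ constant. Setting $c:=\int\phi\,d\mu$, choose $\alpha,\eta>0$ with $\mu(\{|\phi^{**}-c|>2\alpha\})>2\eta$; since $g_k\to\phi^{**}$ in $\mu$-probability, the open set $U_k:=\{x:|g_k(x)-c|>\alpha\}$ satisfies $\mu(U_k)\ge\eta$ for every sufficiently large $k$. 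Portmanteau's theorem applied to $U_k$ together with genericity of $\und z$ then yield
\[
\bar d(\{\ell:z_\ell\in U_k\})\ge\liminf_{n\to\infty}\m(\und z,n)(U_k)\ge\mu(U_k)\ge\eta,
\]
so by the reduction $\bar d(\{\ell:|A_k(\phi,\sigma^\ell(\und z))-c|>\alpha\})\ge\eta$ for infinitely many $k$, contradicting the density condition.

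The only nonroutine point I anticipate is the reduction itself: showing that for each fixed window length $k$ the quasi-orbit agrees with genuine $T$-orbit segments on a set of positions $\ell$ of upper density one, uniformly in $\ell$. Once this exceptional set is controlled, the remainder is a direct transcription of Oxtoby's original orbit-based argument to the quasi-orbit setting, using continuity of $g_k$ to convert weak convergence $\m(\und z,n)\to\mu$ into the quantitative density bounds required on both sides of the equivalence.
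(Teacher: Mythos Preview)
Your proof is correct and matches the paper's intent: the paper omits the proof entirely, stating only that ``it follows the same lines as in \cite{Oxtoby}'' with an easy adaptation from orbits to quasi-orbits. Your reduction step---showing that for each fixed $k$ the exceptional set $B_k^c$ has upper density zero, so that $A_k(\phi,\sigma^\ell(\und z))$ may be replaced by the continuous function $g_k(z_\ell)$---is precisely that adaptation, and the remainder (Birkhoff plus dominated convergence plus Markov for the forward direction, a non-constant Birkhoff limit plus Portmanteau for the contrapositive) is a clean rendering of Oxtoby's original argument.
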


Let $n\in\N$, $x\in X$ and $\und z\in X^{\infty}$. For an $(n,\delta)$-match $\pi$ of $\und{x}_T$ with $\und{z}$ and $k\le n$ we define for every $\ell\in\Dom(\pi)\cap\{0,1,\ldots,n-k\}$ a set
\[
\Dom'_\ell=
\{i\in\Dom(\pi): \ell\le i < \ell+k \text{ and }\pi(\ell)\le \pi(i) < \pi(\ell)+k
\}
\]
and the \emph{$(k,\delta)$-match $\pi'_\ell\colon \Dom'_\ell\to\pi(\Dom'_\ell)$
induced by $\pi$ at $\ell$} setting $\pi'_\ell(i)=\pi(i)$ for $i\in\Dom(\pi'_\ell)$. It is easy to see that
$\pi'_\ell$ is indeed an $(k,\delta)$-match of $T^\ell(x)$ with $z_{\pi(\ell)}$ satisfying
$\Dom(\pi'_\ell)=\Dom'_\ell$ and $\Ran(\pi'_\ell)=\pi(\Dom'_\ell)$.

We also need the following technical lemma, which asserts that if an orbit $\und x_T$ and a quasi-orbit $\und z$ are sufficiently $\Fbar$-close, then
for any $\phi\in\mathcal C(X)$ and $k\in\N$ one can find a match $\pi$ which allows one to find a match $\pi$ of $\und x_T$ and $\und z$ so that the averages of $\phi$ over $k$ segments in $\und x_T$ and $\und z$ are also small for most of $k$-segments.
\begin{lem}\label{mmm+}
Fix $\phi\in\mathcal C(X)$ and $\eps>0$. Let $\delta>0$ be such that $y,y'\in X$ and $\rho(y,y')<\delta$ imply $|\phi(y)-\phi(y')|<\eps$.
If $\und z=(z_j)_{j=0}^\infty\in X^{\infty}$ is 
a quasi-orbit and $x\in X$ satisfies $\Fbar(\und{x}_T,\und z)<\delta$, then for every $k\in\N$ there exists $N\in\N$ such that for every $n\geq N$ there are an $(n,\delta)$-match $\pi$ of $\und{x}_T$ with $\und z$ and a set $A\subset\mathcal D(\pi)$ with $|A|>n(1-2\sqrt{\delta}-2\delta)-k$ satisfying
\[\big|A_k(\phi, T^\ell(x))-A_k(\phi,\sigma^{\pi(\ell)}(\und{z}))\big|\leq \eps+4\sqrt\delta||\phi||_{\infty}\quad\text{for every }\ell\in A.\]
\end{lem}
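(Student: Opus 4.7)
The plan is to extract a single $(n,\delta)$-match $\pi$ of $\und{x}_T$ with $\und z$ from the hypothesis $\Fbar(\und{x}_T,\und z) < \delta$ and then argue that for most starting positions $\ell$ the induced sub-match $\pi'_\ell$ covers all but an $O(\sqrt{\delta})$-fraction of the window $[\ell,\ell+k)$. Once that is done, the two $k$-step averages $A_k(\phi,T^\ell(x))$ and $A_k(\phi,\sigma^{\pi(\ell)}(\und z))$ differ only by a matched sum (small by uniform continuity of $\phi$) plus a short boundary term. Using the definition of $\Fbar$ together with Fact \ref{fact:fbar_mono}, one finds $N_0$ such that $\fbar_{n,\delta}(\und{x}_T,\und z) < \delta$ for $n \geq N_0$, and hence some $(n,\delta)$-match $\pi$ has fit $|\pi| > n(1-\delta)$. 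Fix such a $\pi$ for each $n \geq N_0$ and set $\nu_\ell = |[\ell,\ell+k) \setminus \Dom(\pi)|$, $\mu_j = |[j,j+k) \setminus \Ran(\pi)|$.

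Call $\ell$ \emph{good} if $\ell \in \Dom(\pi) \cap [0,n-k)$, $\pi(\ell) \leq n-k$, and $\max\{\nu_\ell,\mu_{\pi(\ell)}\} \leq k\sqrt{\delta}$. By double counting $\sum_\ell \nu_\ell \leq k(n - |\pi|) \leq kn\delta$ and $\sum_j \mu_j \leq kn\delta$, so Markov's inequality rules out at most $n\sqrt{\delta}$ values of $\ell$ for each of the two gap conditions. Subtracting the $\leq n\delta$ positions outside $\Dom(\pi)$, the $\leq 2k$ boundary positions with $\ell > n-k$ or $\pi(\ell) > n-k$, and the $\leq 2n\sqrt{\delta}$ gap-bad positions yields $|A| \geq n(1 - \delta - 2\sqrt{\delta}) - 2k$, which exceeds $n(1 - 2\delta - 2\sqrt{\delta}) - k$ once $n > k/\delta$; enlarge $N_0$ accordingly.

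The heart of the proof, which I expect to be the main subtlety, is showing that every good $\ell$ satisfies $|\Dom(\pi'_\ell)| \geq k - \nu_\ell - \mu_{\pi(\ell)} \geq k(1 - 2\sqrt{\delta})$. Exploiting that $\pi$ is order-preserving the argument splits into two cases: if some $j \in [\pi(\ell),\pi(\ell)+k) \cap \Ran(\pi)$ has $\pi^{-1}(j) \geq \ell+k$, then monotonicity forces $\pi(i) \leq j < \pi(\ell)+k$ for every $i \in \Dom(\pi) \cap [\ell,\ell+k)$, giving $\Dom(\pi'_\ell) = \Dom(\pi) \cap [\ell,\ell+k)$ of size at least $k - \nu_\ell$; otherwise $\pi^{-1}([\pi(\ell),\pi(\ell)+k) \cap \Ran(\pi)) \subseteq [\ell,\ell+k)$ and the fit is at least $k - \mu_{\pi(\ell)}$. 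With this in hand, decompose
\[
\sum_{i=0}^{k-1}\bigl[\phi(x_{\ell+i}) - \phi(z_{\pi(\ell)+i})\bigr]
= \sum_{i' \in \Dom(\pi'_\ell)}\bigl[\phi(x_{i'}) - \phi(z_{\pi(i')})\bigr] + \mathcal{R},
\]
where the matched sum has at most $k$ terms, each of modulus $< \eps$ since $\rho(x_{i'},z_{\pi(i')}) < \delta$ is within the chosen modulus of continuity of $\phi$, while $\mathcal{R}$ contains at most $2(k - |\Dom(\pi'_\ell)|) \leq 4k\sqrt{\delta}$ terms each bounded by $\|\phi\|_\infty$. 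Division by $k$ yields the promised estimate $\eps + 4\sqrt{\delta}\|\phi\|_\infty$.
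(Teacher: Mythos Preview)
Your proof is correct and follows essentially the same strategy as the paper: isolate a large $(n,\delta)$-match, discard starting positions $\ell$ at which too many indices in $[\ell,\ell+k)$ or $[\pi(\ell),\pi(\ell)+k)$ are unmatched (via a Markov/double-counting argument identical in spirit to the paper's bounds on the sets $A_D,A_R$), and then split the difference of $k$-averages into a matched part controlled by uniform continuity and an unmatched remainder of size $O(\sqrt{\delta})\cdot k$.  Your case analysis for the lower bound on $|\Dom(\pi'_\ell)|$ is actually more explicit than the paper's one-line assertion.

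There is one genuine difference worth pointing out.  The paper uses the quasi-orbit hypothesis: it introduces an additional bad set $A_Z$ of positions $j$ at which $\und z$ fails to be a true orbit segment on $[\pi(j),\pi(j)+k)$, so that for $j\notin A_Z$ one may identify the sequence average $A_k(\phi,\sigma^{\pi(j)}(\und z))$ with the orbit average $A_k(\phi,z_{\pi(j)})$ before carrying out the matched-sum estimate.  You bypass this step entirely by working directly with the sequence values $z_{\pi(\ell)+i}$, which is exactly what $A_k(\phi,\sigma^{\pi(\ell)}(\und z))$ denotes.  Consequently your argument never touches the quasi-orbit assumption and in fact establishes the lemma for an arbitrary sequence $\und z\in X^\infty$; this is a mild but genuine simplification of the paper's route.
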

\begin{proof}
Fix $k\in\N$, $\phi\in\mathcal C(X)$ and $\eps>0$. Choose $N\in\N$ such that for every $n\geq N$ there exists
an $(n,\delta)$-match $\pi$ of $\und{x}_T$ with $\und z$ satisfying $|\pi|>n(1-\delta)$ and
\[|\{0\leq j<n\,:T(z_{j+i})\neq z_{j+i+1}\text{ for some }0\leq i<k\}|<n\delta.\]
Fix $n\geq N$. Define
\begin{align*}
A_Z&=\{0\leq j<n\,:T(z_{\pi(j)+i})\neq z_{\pi(j)+i+1}\text{ for some }0\leq i<k\},\\
A_R&=\Big\{0\leq j<n-k\,:\,j\in\Dom(\pi)\text{ and }\big|\{0\leq i<k\,:\,\pi(j)+i\notin\Ran(\pi)\}\big|\geq\sqrt\delta k\Big\},\\
A_D&=\Big\{0\leq j<n-k\,:\,j\in\Dom(\pi)\text{ and }\big|\{0\leq i<k\,:\,j+i\notin\Dom(\pi)\}\big|\geq\sqrt\delta k\Big\}.\end{align*}
Note that $|A_Z|\le n\delta$. To estimate $|A_R|$, define $\Ran^{c}(\pi)=\{0,1,\ldots,n-1\}\setminus\Ran(\pi)$, and note that
\[
|\Ran^c(\pi)|\ge |\{m\in \Ran^c(\pi)\,:\,\pi(j)\le m <\pi(j)+k\text{ for some }j\in A_R\}|.
\]
On the other hand, for each $m\in\Ran^c(\pi)$ the set $\{j\in A_R\,:\,\pi(j)\le m <\pi(j)+k\}$ has at most $k$ elements, hence each $j\in A_R$ implies that there are at least $\sqrt{\delta}$ members of $\Ran^c(\pi)$. Thus $|\Ran^c(\pi)|\ge|A_R| \sqrt{\delta}$. This together with $|\Ran^c(\pi)|<n\delta$ gives us $|A_R|< n\sqrt\delta$. An analogous reasoning leads to $|A_D|< n\sqrt\delta$.
Define \[A=\big(\{0,\ldots, n-k-1\}\cap \mathcal D(\pi)\big)\setminus(A_Z\cup A_D\cup A_R).\]
Then $|A|> n(1-2\sqrt\delta-2\delta)-k$ and for every $j\in A$ we have
\begin{enumerate}[(i)]
\item the $(k,\delta)$-match $\pi'_j$ induced by $\pi$ at $j$ satisfies $|\pi'_j|>(1-2\sqrt{\delta})k$,
\item $A_k(\phi,z_{\pi(j)})=A_k(\phi,\sigma^{\pi(j)}(\und{z}))$.
\end{enumerate}
Therefore
\begin{multline*}
\big|A_k(\phi, T^j(x))-A_k(\phi,\sigma^{\pi(j)}(\und{z}))\big|=\big|A_k(\phi, T^j(x))-A_k(\phi,z_{\pi(j)})\big|\leq\\\leq\frac 1k\sum_{i\in \Dom(\pi'_j)}|\phi(T^{i}(x))-\phi(z_{\pi(i)})|+\frac{1}{k}\sum_{i\notin \Dom(\pi'_j)}|\phi(T^{i}(x))|+\frac 1k\sum_{i\notin \Ran(\pi'_j)}|\phi(z_i)|\leq\\\leq \eps+2\sqrt\delta||\phi||_{\infty}+2\sqrt\delta||\phi||_{\infty}=\eps+4\sqrt\delta||\phi||_{\infty}
\end{multline*}
and the lemma follows.
\end{proof}

\subsection{$\Fbar$-limits of ergodic measures are ergodic}

We prove the main theorem of this section. As the GIKN sequence of periodic orbits is $\Fbar$-Cauchy by Theorem \ref{thm:GIKNmain} we see that our Theorem \ref{thm:Cauchy} generalizes \cite[Lemma 2]{GIKN}. Furthermore, since $\Fbar\le D_B$ this result extends also \cite[Theorem 15]{KLO}.
\begin{thm}\label{thm:Cauchy}
If $(x^{(p)})_{p=1}^{\infty}\subset X$ is an $\Fbar$-Cauchy sequence of ergodic points 
then it determines an ergodic measure.
\end{thm}
\begin{proof}
By Fact~\ref{H} there exists a measure $\mu\in\mathcal\M_T(X)$ such that $\mu_p\to\mu$ as $p\to\infty$ in the weak$^*$ topology on $\M_T(X)$. We will apply Oxtoby's criterion (Theorem \ref{thm:Oxtoby}) to show that $\mu$ is ergodic. Let $\und z=(z_j)_{j=0}^\infty$ be a quasi-orbit
such that $\f(\und{x}^{(p)}_T,\und z)\to 0$ as $p\to\infty$ 
provided by Lemma \ref{lem:completness}. Clearly, $\und z$ is generic for $\mu$. Fix $\phi\in\mathcal{C}(X)$ and $\alpha>0$. We need to show that for every $\eta>0$ and all sufficiently large $k$ the set of $j$'s with initiating an $(\alpha,\phi)$-bad $k$-segment in $\und{z}$ has upper density smaller than $\eta$.

Note that for every $i,j,k,p\in\N$ we have 
\begin{multline}
\label{ineq:main}
|A_k(\phi,\sigma^j(\und{z})-\phi^*(\und z)|\leq |A_k(\phi,\sigma^j(\und{z}))-A_k(\phi,T^i(x^{(p)}))|+\\+|A_k(\phi,T^i(x^{(p)}))-\phi^*(x^{(p)})|+\left|\phi^*(x^{(p)})-\phi^*(\und z)\right|.
\end{multline}
By Theorem~\ref{thm:weak} and Fact \ref{H} we 
can choose $P(\alpha)\in\N$ such that for every $p\geq P(\alpha)$ one has
$\left|\phi^*(x^{(p)})-\phi^*(\und z)\right|\leq\alpha/3$.

Let $\delta_0>0$ be such that $2\sqrt\delta_0+2\delta_0<\eta/4$ and $4\sqrt\delta_0||\phi||_{\infty}<\alpha/6$. Pick $\delta<\delta_0$ such that for all $y,y'\in X$ with $\rho(y,y')<\delta$ one has $|\phi(y)-\phi(y')|<\alpha/6$. Note that our choice of constants is motivated by Lemma \ref{mmm+}.
Choose $p\geq P(\alpha)$ such that $\fbar(\und z,x^{(p)}_T)<\delta$.
Pick $K>0$ such that for all $k\geq K$ the upper density of the set of $i$'s initiating an $(\alpha/3,\phi)$-bad $k$-segment in $x^{(p)}_T$ is smaller than $\eta/2$. We are going to prove that each $k\ge K$ is ``sufficiently large'' to imply that the upper density of
the set of $j$'s initiating an $(\alpha,\phi)$-bad $k$-segment in $\und{z}$ is smaller than $\eta$.  To this end fix any $k\ge K$.
Let $N$ be sufficiently large to guarantee that $k/N<\eta/4$ and for every $n\ge N$ we have
\begin{enumerate}[(i)]
\item\label{1} there exists an $(n,\delta)$-match $\pi$ of $\und z$ with $x^{(p)}$ such that $|\pi|>(1-\delta)n$,
\item\label{2} $|\{0\leq j<n\,:T(z_{j+i})\neq z_{j+i+1}\text{ for some }0\leq i<k\}|<n\delta$,
\item\label{3} $|\{0\le i<n\,: \text{$i$ initiates an $(\alpha/3,\phi)$-bad $k$-segment in $x^{(p)}_T$}\}|< n\eta/2$.
\end{enumerate}

We are going to show that for each $n\ge N$ the number of $0\le j<n$ such that
\[
A^*_k(\phi,\sigma^j(\und{z}))=|A_k(\phi,(z_s)_{s=j}^{\infty})-\phi^*(\und z)|\leq\alpha
\]
is larger than $(1-\eta)n$. To this end let $\tilde\pi$ be any extension of $\pi$ to a bijection from $\{0,\ldots,n-1\}$ onto $\{0,\ldots,n-1\}$.
It follows from the proof of Lemma~\ref{mmm+} that conditions~\eqref{1} and~\eqref{2} guarantee that the number of integers $0\leq j<n$ for which
\[
|A_k(\phi,\sigma^j(\und{z}))-A_k(\phi,T^{\tilde\pi(j)}(x^{(p)}))|
<\alpha/6+4\sqrt\delta_0||\phi||_{\infty}<\alpha/3
\]
is larger than $n(1-2\sqrt\delta-2\delta)-k$ and hence larger than $n(1-\eta/2)$. 
Some of these $j$'s may initiate an $(\alpha/3,\phi)$-bad $k$-segment in $x^{(p)}_T$, but \eqref{3} bounds from the above the number of such $j$'s by $n\eta/2$. Therefore setting $i=\tilde\pi(j)$ in \eqref{ineq:main} we see that
the number of integers $0\leq j<n$ for which all summands on the right hand side of \eqref{ineq:main} are bounded above by $\alpha/3$  is larger than $n(1-\eta)$. Since this holds true for any $n\ge N$ we see that upper density of the set of all $j$'s initiating an $(\alpha,\phi)$-bad $k$-segment in $\und{z}$ is smaller than $\eta$, hence the Oxtoby criterion yields that $\und z$ is an ergodic sequence.
\end{proof}

Ergodicity of royal measures is now a straightforward corollary, hence \cite[Lemma 2.5]{BDG} is recovered except the statement about the support (which can be now proved in the same way as in~\cite{BDG}, since the proof of that part of \cite[Lemma 2.5]{BDG} is independent from the rest).
\begin{cor}\label{cor:royal-ergodic}
Every royal measure is ergodic.
\end{cor}

\section{Lower semicontinuity of entropy in $\Fbar$}\label{sec:lsc}
We are going to show that the function assigning to a generic point the entropy of the associated measure is lower semicontinuous with respect to $\Fbar$.  \begin{thm}\label{lsc}
If a sequence of measures $(\mu_n)_{n=1}^\infty$ converges in $\Fbar$ to $\mu_0\in\MT(X)$, then
\[
h(\mu_0)\le \liminf_{n\to\infty} h(\mu_n).
\]
\end{thm}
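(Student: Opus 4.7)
The plan is to reduce the problem to the symbolic setting via a carefully chosen partition and apply the fact that entropy is uniformly continuous with respect to $\fbar$ on ergodic shift-invariant measures over a finite alphabet \cite[Proposition~3.4]{ORW}. Note first that $\mu_0$ itself is ergodic: the convergent sequence of quasi-orbits witnessing $\Fbar$-convergence is $\Fbar$-Cauchy, and the argument of Theorem~\ref{Cauchy} (which adapts to quasi-orbits) shows that any measure $\Fbar$-limit of ergodic measures is ergodic.

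Fix $\eps>0$ and choose a Borel partition $\mathcal P\in\bP^k(X)$ such that $\mu_n(\partial\mathcal P)=0$ for every $n\ge 0$ and $h(\mu_0,\mathcal P)\ge h(\mu_0)-\eps$ (with the obvious modification if $h(\mu_0)=\infty$). Such a $\mathcal P$ is obtained by partitioning $X$ into differences of small open balls whose radii avoid the countably many values charged by any $\mu_n$; the lower bound on $h(\mu_0,\mathcal P)$ is then achieved by refinement and appeal to the Kolmogorov--Sinai theorem. Let $\und z^{(n)}$ and $\und z$ be the $\mu_n$- and $\mu_0$-generic quasi-orbits witnessing $\Fbar$-convergence. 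By Lemma~\ref{lem:generic}, $\omega=\mathcal P(\und z)$ and $\omega^{(n)}=\mathcal P(\und z^{(n)})$ are generic sequences for the symbolic representation measures $\mu_{0,\mathcal P}$ and $\mu_{n,\mathcal P}$, respectively, and both of these measures are ergodic as factors of ergodic measures.

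The key step is to prove $\fbar(\omega,\omega^{(n)})\to 0$. Introduce
\[
G_\delta=\{x\in X:B(x,\delta)\subset P_{\mathcal P(x)}\},\qquad \delta>0,
\]
where $\mathcal P(x)$ denotes the index of the atom containing $x$. One checks that $G_\delta\nearrow X\setminus\partial\mathcal P$ as $\delta\to 0^+$, so $\mu_0(X\setminus G_\delta)\to 0$. Given $\eta>0$, fix $\delta\in(0,\eta/3)$ with $\mu_0(X\setminus G_\delta)<\eta/6$, and approximate $\chi_{X\setminus G_\delta}$ from above by a continuous function; genericity of $\und z$ then gives
\[
\limsup_{m\to\infty}\frac{1}{m}\bigl|\{0\le i<m:z_i\notin G_\delta\}\bigr|<\frac{\eta}{3}.
\]
Take $n$ large enough that $\Fbar(\und z,\und z^{(n)})<\delta$; then for every sufficiently large $m$ there is an $(m,\delta)$-match $\pi$ of $\und z$ with $\und z^{(n)}$ of size $|\pi|\ge(1-\delta)m$. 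Restricting $\pi$ to $A=\{i\in\Dom(\pi):z_i\in G_\delta\}$, the defining inclusion $B(z_i,\delta)\subset P_{\mathcal P(z_i)}$ together with $\rho(z_i,z^{(n)}_{\pi(i)})<\delta$ forces $\mathcal P(z^{(n)}_{\pi(i)})=\mathcal P(z_i)$ for each $i\in A$; hence the indices in $A$ provide the common order-preserving subsequence required by the definition of the edit metric, giving $\fbar_m(\omega,\omega^{(n)})\le 1-|A|/m<\delta+\eta/3<\eta$ for large $m$. Consequently $\fbar(\omega,\omega^{(n)})\le\eta$, and as $\eta>0$ was arbitrary, $\fbar(\omega,\omega^{(n)})\to 0$.

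Lemma~\ref{lem:fbar-bound} now yields $\fbar(\mu_{0,\mathcal P},\mu_{n,\mathcal P})\to 0$, so \cite[Proposition~3.4]{ORW} gives $h(\mu_{n,\mathcal P})\to h(\mu_{0,\mathcal P})$. Since $\code$ realises $\mathcal P$ as the pullback of the time-zero generating partition of $\Omega_k$, one has $h(\mu_n,\mathcal P)=h(\mu_{n,\mathcal P})$ for each $n\ge 0$, while $h(\mu_n,\mathcal P)\le h(\mu_n)$. Therefore
\[
h(\mu_0)-\eps\;\le\; h(\mu_0,\mathcal P)\;=\;\lim_{n\to\infty}h(\mu_n,\mathcal P)\;\le\;\liminf_{n\to\infty}h(\mu_n),
\]
and letting $\eps\to 0^+$ completes the proof. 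The principal obstacle is the third paragraph: an $(m,\delta)$-match in $X$ need not respect the partition $\mathcal P$, and the set $G_\delta$ combined with the genericity of $\und z$ is introduced precisely in order to discard the small portion of the match lying within $\delta$ of $\partial\mathcal P$ and thereby convert the $X$-level match into a bona fide $\fbar$-match of the symbolic codings.
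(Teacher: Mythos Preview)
Your proof is correct and follows the same overall strategy as the paper: pass to symbolic codings, show that $\Fbar$-closeness of the generic (quasi-)orbits yields $\fbar$-closeness of the codings, and then invoke the $\fbar$-continuity of entropy for ergodic shift-invariant measures from \cite[Proposition~3.4]{ORW}. The difference lies only in how the passage from an $(m,\delta)$-match in $X$ to an $\fbar_m$-match of codings is arranged. The paper outsources this step to Lemma~\ref{lem:repartition}, which replaces $\mathcal P$ by a nearby partition $\mathcal R\in\bP^{k+1}$ (built from compact cores $R_j\subset P_j$ and their $c$-hulls, with an extra ``garbage'' atom) and then uses the $d_1^{\mu_0}$-continuity of $h(\mu_0,\cdot)$ to control the entropy loss in changing partitions. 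You instead fix $\mathcal P$ once with $\mu_n(\partial\mathcal P)=0$ for all $n\ge 0$ and handle the boundary issue inline via the set $G_\delta=\{x:B(x,\delta)\subset P_{\mathcal P(x)}\}$; this is really the same geometric idea (discard the $\delta$-collar of $\partial\mathcal P$) with different bookkeeping. Your route is a bit more direct for this theorem since it avoids both the auxiliary repartitioning lemma and the appeal to $d_1^{\mu_0}$-continuity of entropy; the paper's route is more modular, as Lemma~\ref{lem:repartition} is reused verbatim in the proof of Theorem~\ref{thm:looselyK}.

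One small point worth tightening: $G_\delta=\{x:\psi(x)\ge\delta\}$ with $\psi(x)=\max_j d(x,X\setminus P_j)$ is \emph{closed}, so $X\setminus G_\delta$ is open and portmanteau gives only a lower bound on its empirical frequency. Your ``approximate from above by a continuous function'' does work, but the cleanest fix is to choose $\delta$ avoiding the countably many values with $\mu_0(\{\psi=\delta\})>0$; then $\mu_0(\partial G_\delta)=0$ and genericity of $\und z$ gives $\m(\und z,m)(X\setminus G_\delta)\to\mu_0(X\setminus G_\delta)<\eta/6$ directly.
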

Before proceeding with the proof, we first note the following technical result allowing us, given an $\Fbar$-converging sequence of invariant measures to replace any partition with a faithful partition without much change in $\fbar$-distance between codings of generic points of the measures in the sequence.
\begin{lem}\label{lem:repartition}
If $\delta>0$, $\mathcal{P}\in\bP^k(X)$ 
and $(\mu_j)_{j=0}^{\infty}\subset\M_T^e(X)$, then
there are $\gamma>0$ and 
$\mathcal{R}\in\bP^{k+1}(X)$ with $d^{\mu_0}_1(\mathcal{P},\mathcal{R})<\delta$ 
satisfying: if $\und x\in X^\infty$ is generic for $\mu_0$, $j\in\N$, and $\und z\in X^\infty$ is generic for  $\mu_{j}$ with $\Fbar(\und x,\und z)<\gamma$,  
then $\fbar(\mathcal{R}(\und x),\mathcal{R}(\und z))<\delta+\Fbar(\und x,\und z)$ 
and $\mu_j(\partial\mathcal{R})=\mu_0(\partial\mathcal{R})=0$. The same holds if we replace $\fbar$ by $\dbar$ and $\Fbar$ by $D_B$.
\end{lem}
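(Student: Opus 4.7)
The plan is to construct $\mathcal{R}$ in two stages and then to tune $\gamma$ to the construction. Stage~1 replaces $\mathcal{P}$ by a partition $\mathcal{Q}=\{Q_0,\ldots,Q_{k-1}\}\in\bP^k(X)$ that is universally faithful, meaning $\mu_j(\partial Q_i)=0$ for every $i<k$ and every $j\in\N$, and is $\mu_0$-close to $\mathcal{P}$. By inner regularity, pick compacts $K_i\subset P_i$ with $\mu_0(P_i\setminus K_i)$ small, and consider open neighborhoods $U_i(r)=\{x:\dist(x,K_i)<r\}$. Since $\partial U_i(r)\subset\{\dist(\cdot,K_i)=r\}$ and these level sets are pairwise disjoint in $r$, for each $(i,j)$ only countably many $r$ satisfy $\mu_j(\partial U_i(r))>0$. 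Choosing $r_i>0$ outside this countable exceptional set (over all $i,j$) and small enough that $\mu_0(U_i(r_i)\setminus K_i)$ is small, and setting $Q_0=U_0(r_0)$, $Q_i=U_i(r_i)\setminus\bigcup_{i'<i}U_{i'}(r_{i'})$ for $0<i<k-1$, and $Q_{k-1}=X\setminus\bigcup_{i<k-1}Q_i$, yields a partition with $\mu_j(\partial Q_i)=0$ for all $i,j$ and $d^{\mu_0}_1(\mathcal{P},\mathcal{Q})<\delta/4$.

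Stage~2 builds the buffer atom. For $\eta>0$ put $R_i^{(\eta)}=\{x:\dist(x,X\setminus Q_i)>\eta\}$, open subsets of $\Int Q_i$ whose topological boundaries lie in $\{\dist(\cdot,X\setminus Q_i)=\eta\}$. The same disjointness argument lets us pick $\eta$ outside a countable set so that $\mu_j(\partial R_i^{(\eta)})=0$ for all $i,j$. Since $R_i^{(\eta)}\uparrow\Int Q_i$ as $\eta\downarrow 0$ through good values and $\mu_0(\partial Q_i)=0$, one can further take $\eta$ so small that, setting $R_i=R_i^{(\eta)}$ and $R_k=X\setminus\bigcup_{i<k}R_i$, one has $\mu_0(R_k)<\delta/8$. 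Since $R_i\subset Q_i$, a direct symbol-by-symbol comparison gives $d^{\mu_0}_1(\mathcal{P},\mathcal{R})\le d^{\mu_0}_1(\mathcal{P},\mathcal{Q})+\mu_0(R_k)<\delta$. Also $\partial R_k\subset\bigcup_{i<k}\partial R_i$, so $\mu_j(\partial\mathcal{R})=0$ for every $j\in\N$; crucially, $R_k$ is closed.

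Stage~3 fixes $\gamma$ and verifies the $\fbar$-estimate. Because $R_k$ is closed, continuity of measure gives $\mu_0(R_k^\gamma)\searrow\mu_0(R_k)$ as $\gamma\downarrow 0$; pick $\gamma<\eta$ so small that $\gamma+\mu_0(R_k^\gamma)<\delta/2$. Given $\und x\in\Gen(\mu_0)$ and $\und z\in\Gen(\mu_j)$ with $\Fbar(\und x,\und z)<\gamma$, Fact~\ref{H} (combined with uniqueness of the generated measure, Theorem~\ref{*sl}) yields $D_P(\mu_0,\mu_j)<\gamma$, hence $\mu_j(R_k)\le\mu_0(R_k^\gamma)+\gamma<\delta/2$. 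Fix $\beta'\in(\Fbar(\und x,\und z),\gamma)$; for all large $n$, some $(n,\beta')$-match $\pi$ of $\und x,\und z$ satisfies $|\pi|>n(1-\beta')$. If $i\in\Dom(\pi)$ with $x_i\in R_c$ for some $c<k$, then $\dist(x_i,X\setminus Q_c)>\eta>\beta'>\rho(x_i,z_{\pi(i)})$, forcing $z_{\pi(i)}\in Q_c$; since $Q_c\setminus R_c\subset R_k$ by the partition structure, $\mathcal{R}(x_i)\neq\mathcal{R}(z_{\pi(i)})$ can occur only when $x_i\in R_k$ or $z_{\pi(i)}\in R_k$. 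Restricting $\pi$ to the indices avoiding both conditions yields an order-preserving match of the codings whose fit is at least $|\pi|-|\{i<n:x_i\in R_k\}|-|\{m<n:z_m\in R_k\}|$. Genericity and the fact that $R_k$ is a continuity set of both $\mu_0$ and $\mu_j$ give upper densities $\mu_0(R_k)$ and $\mu_j(R_k)$ for the two error counts, so
\[
\fbar(\mathcal{R}(\und x),\mathcal{R}(\und z))\leq\beta'+\mu_0(R_k)+\mu_j(R_k)<\beta'+\tfrac{5\delta}{8};
\]
letting $\beta'\searrow\Fbar(\und x,\und z)$ yields the claimed bound. The $\dbar$/$D_B$ variant runs identically with the $(n,\beta')$-match replaced by the set of indices $\{i<n:\rho(x_i,z_i)<\beta'\}$ supplied by $D_B'(\und x,\und z)<\beta'$.

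The main obstacle is that a single partition $\mathcal{R}$ must have $\mu_j(R_k)$ small under every one of the countably many measures $\mu_j$, which cannot be arranged directly. The saving point is that the hypothesis $\Fbar(\und x,\und z)<\gamma$ already constrains $\mu_j$ to a $\gamma$-Prokhorov ball around $\mu_0$, converting this into bounding $\mu_0(R_k^\gamma)$; this works precisely because $R_k$ is closed, and motivates both the two-stage construction and the coupling of $\gamma$ to $\eta$ via continuity of measure.
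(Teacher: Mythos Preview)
Your proof is correct, and the route is genuinely different from the paper's. The paper also takes compact cores inside the $P_j$ and enlarges them, but instead of a single buffer partition it fixes \emph{two} concentric enlargements $\mathcal{R}^\alpha$ and $\mathcal{R}^\beta$ (with $\alpha<\beta$, both chosen so that all boundaries are $\mu_j$-null) and sets $\gamma=\beta-\alpha$; the final partition is $\mathcal{R}=\mathcal{R}^\beta$. The key step is asymmetric: one codes $\und x$ by the \emph{inner} partition $\mathcal{R}^\alpha$ and $\und z$ by the \emph{outer} partition $\mathcal{R}^\beta$, so that any $\gamma$-close pair $(x_i,z_{\pi(i)})$ with $x_i\in R^\alpha_j$ automatically lands in $R^\beta_j$; the only loss comes from the $\mu_0$-measure of the leftover atom $R^\alpha_0$. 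One then passes from $\mathcal{R}^\alpha(\und x)$ to $\mathcal{R}^\beta(\und x)$ via $\dbar\le d_1^{\mu_0}(\mathcal{R}^\alpha,\mathcal{R}^\beta)$. Thus every error term in the paper's argument is a $\mu_0$-quantity, and no appeal to the Prokhorov metric or to $\mu_j(R_k)$ is needed.

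Your single-partition-with-buffer approach is more transparent about what $\mathcal{R}$ actually is, but it forces you to bound $\mu_j(R_k)$, which you cleverly recover from the hypothesis $\Fbar(\und x,\und z)<\gamma$ via Fact~\ref{H} and the Prokhorov inequality (the reference to Theorem~\ref{*sl} is not needed: genericity of $\und x$ and $\und z$ already makes $\hat\omega$ a singleton). The paper's two-scale trick sidesteps this entirely, which is why its $\gamma$ is determined purely by the separation of the compact cores rather than by continuity of $\mu_0(R_k^\gamma)$. Either way the bound $\fbar(\mathcal{R}(\und x),\mathcal{R}(\und z))<\Fbar(\und x,\und z)+\delta$ falls out with room to spare.
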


\begin{proof} Fix $\delta>0$. Using regularity of $\mu_0$ for each $1\le j\le k$ we can find a compact set
$R_j\subset P_j$ such that $\mu_0(P_j\setminus R_j)<\delta/(8k^2)$. For $1\le i < j\le k$ the sets $R_i$ and $R_j$ are compact and disjoint, hence
\[
\Delta=\min\{\rho(x,y):x\in R_i,\,y\in R_j,\,1\le i < j\le k\}>0.
\]
Set $\tilde\gamma=\Delta/2$. For each $1\le j\le k$ and $0\le c<\tilde\gamma$  let $R^c_j$ be the closed $c$-hull around $Q_j$, that is,
$R^c_j=\{x\in X: \dist(x,Q_j)\le c\}$ and set
$R^c_0=X\setminus (R^c_1\cup\ldots\cup R^c_k)$. For each $0\le c<\tilde\gamma$ define the partition $\mathcal{R}^c=\{R^c_0,R^c_1,\ldots,R^c_k\}$. It is easy to see that $d_1^{\mu_0}(\mathcal{P},\mathcal{R}^\alpha)<\delta/2$ and $d_1^{\mu_0}(\mathcal{R}^\alpha,\mathcal{R}^\beta)<\delta/2$ for any $0\le \alpha,\beta<\tilde\gamma$.

For $1\le j\le k$ and $0<c<\tilde\gamma$ define a set $\partial_c Q_j =\{x\in X: \dist (x,Q_j)=c\}$.
Note that $\partial_c Q_j$ contains (but need not to be equal) the topological boundary of the set $R^c_j=\{x\in X: \dist (x,Q_j)\le c\}$.
Consider a family of closed sets $\mathcal{C}=\{\partial_c Q_1\cup\ldots\cup\partial_c Q_k: 0<c<\tilde\gamma\}$.
Since elements of $\mathcal{C}$ are pairwise disjoint, only countably many of them can have positive $\mu_j$ measure for some $j\in\N\cup\{0\}$. Therefore the set $E$ of all parameters $0<c<\tilde\gamma$ such that for each $c\in E$ the set $\partial_c Q_1\cup\ldots\cup\partial_c Q_k\in\mathcal{C}$ is a $\mu_j$-null set for $j\in\N\cup\{0\}$ has at most countable complement in $(0,\tilde\gamma)$.
Thus we can pick $\alpha,\beta\in E$ with $\alpha<\beta$ and $\beta-\alpha>\tilde\gamma/2$. Set $\gamma=\tilde\gamma/2$. Let $\und z\in X^\infty$ be a generic sequence for $\mu_j$ for some $j\in\N$ and $\fbar(\und x,\und z)<\gamma$. Note that $\beta-\alpha>\fbar(\und x,\und z)$. Define $x''=\mathcal{R}^\alpha(\und x)$, $z'=\mathcal{R}^\beta(\und z)$ and $x'=\mathcal{R}^\beta(\und x)$ (all three points are considered as elements of the shift space over the alphabet $\{0,1,\ldots,k\}$).

We claim that $\fbar(x'',z')< \fbar(\und x,\und z)+\delta/2$. Indeed, if for some $m,n\ge 0$ and $1\le j\le k$ we have $x_n\in R^\alpha_j$  and $\rho(x_n,z_m)<\fbar(\und x,\und z)<\beta-\alpha$, then $\mathcal{R}^\beta(z_m)=j$. Furthermore, genericity of $\und x$ for $\mu_0$ and $\mu_0(\partial R^\alpha_0)=0$ implies that $d(\{n\ge 0: x_n\in R^\alpha_0\})=\mu_0(R^\alpha_0)<\delta/2$. This proves the claim.

Using again that $\und x$ is generic for $\mu_0$, and $\partial\mathcal{R}^\alpha$, $\partial\mathcal{R}^\beta$ are $\mu_0$-null we have easily that $\dbar(x',x'')\le d_1^{\mu_0}(\mathcal{R}^\alpha,\mathcal{R}^\beta)$.
This together with the claim above and $\fbar(x',x'')\le \dbar(x',x'')$ complete the proof for the $\fbar$ case. The $\dbar$-part follows the same way.
\end{proof}

\begin{proof}[Proof of Theorem \ref{lsc}] 
Fix $\eps>0$. Let $\mathcal{P}=\{P_1,\ldots,P_k\}$ be a measurable partition of $X$ with $h(\mu_0,\mathcal{P})\ge h(\mu_0)-\eps/3$.
Let $\zeta>0$ be so small that $\f(y,y')<\zeta$ for two shift generic points $y,y'\in\{0,1,\ldots,k\}^\infty$ implies that the entropies of the corresponding measures differ by at most $\eps/3$ (the existence of such a $\zeta$ is guaranteed by \cite[Proposition 3.4]{ORW}, \cite{DKL}).
Since the function $\mathcal P\mapsto h(\mu,\mathcal P)$ is uniformly continuous on on $\bP^{k+1}$ \cite[Lemma 15.9(5)]{Glasner}, we may pick $0<\delta<\zeta/2$ such that for any partition $\mathcal{S}=\{S_0,S_1,\ldots,S_k\}$ with $d_1^{\mu_0}(\mathcal{P},\mathcal{S})<\delta$ we have $|h(\mu_0,\mathcal{P})-h(\mu_0,\mathcal{S})|<\eps/3$.

Use Lemma~\ref{lem:repartition} to find $\gamma>0$ for $\mu_0$, $\delta/2$ and $\mathcal{P}$. Let $N\in\N$ be such that $\Fbar(\und x,\und{x}^{(N)}_T)<\min\{\gamma,\delta/2\}$.
Take the partition $\mathcal{R}$ provided by Lemma~\ref{lem:repartition} for $\und x$, $\und{x}^{(N)}_T$ and $\mu_N$. Let $x'=\mathcal{R}(\und x)$ and $x'_N=\mathcal{R}(x^{(N)}_T)$.
It follows from Lemma~\ref{lem:repartition} that
$x'_N$ is a generic point for some measure $\mu'_N$ with
$h(\mu'_N)=h(\mu_N,\mathcal{R})\le h(\mu_N)$. By Lemma we see that $\fbar(x',x'_N)<\delta$.  Therefore $|h(\mu'_0)-h(\mu'_N)|<\eps/3$, hence
\[
h(\mu_N)\ge h(\mu_N,\mathcal{S})=h(\mu'_N)\ge h(\mu_0)-\eps.
\]
This finishes the proof.
\end{proof}

Note that by \cite{DKL} entropy is uniformly continuous if $X=\mathscr{A}^\infty$ and we equip $\M_{\sigma}(\mathscr{A}^\infty)$ with $\fbar$ metric (which, as we noted above, is uniformly equivalent with $\Fbar$ induced by a standard metric on $\mathscr{A}^\infty$). 
This is also the case, when the entropy function $\MT(X)\ni \mu\to h(\mu)\in\R$ is upper semicontinuous with respect to the weak$^*$ topology on $\MT(X)$. For example, this happens if $X$ is a manifold and $T$ is of class $\mathcal{C}^\infty$ or, more generally, if $T$ is asymptotically $h$-expansive, see \cite{Downarowicz}. 
\begin{cor}\label{cor:continuity}
If $(X,T)$ is such that the function $\MT(X)\ni \mu\to h(\mu)\in\R$ is upper semicontinuous with respect to the weak$^*$ topology on $\MT(X)$, then for every $\eps>0$ there is $\delta>0$ such that for $\mu,\mu'\in\MT(X)$ whenever there are $\und x\in\Gen(\mu)$ and $\und x'\in\Gen(\mu')$ with $\Fbar(\und x,\und x')<\delta$, then $\lvert h(\mu)-h(\mu')\rvert<\eps$.
\end{cor}

Without extra assumptions, the entropy function need not be continuous with respect to the $\Fbar$-convergence.


\begin{ex} Let $\mathscr{A}=\{0\}\cup\{1/k:k\in\N\}$ with the topology inherited from $[0,1]$. Consider $X=\mathscr{A}^\infty$ with any metric compatible with the product topology. Let $\xi^{(n)}$ be a measure on $\mathscr{A}$ uniformly distributed on $\{1/\ell: 2^n\le \ell <2^{n+1}\}$ and $\mu^{(n)}$ denote the product measure on $X$. It is easy to see that $\mu^{(n)}$ is a shift invariant measure on $\mathscr{A}^\infty$ and the sequence $(\mu^{(n)})_{n\in\N}$ converges in $\Fbar$ to the measure concentrated on a $\sigma$-fixed point $(0,0,\ldots)\in\mathscr A^\infty$. Furthermore, $h(\mu^{(n)})=n\log 2$, which means that the entropy function cannot be upper semicontinuous.
\end{ex}

\section{GIKN construction leads to loosely Kronecker measure}\label{sec:GIKN-LK}

We are going to show that $\Fbar$-limit of loosely Kronecker measure 
is either a periodic measure or a loosely Kronecker measure. Since the GIKN construction yields a measure with an uncountable support, every royal measure must be loosely Kronecker.

\begin{thm}\label{thm:looselyK}
An aperiodic $\Fbar$-limit of periodic measures (loosely Kronecker measures) is loosely Kronecker.
\end{thm}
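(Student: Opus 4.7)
The plan is to verify the two defining conditions of loosely Kronecker: zero entropy and Katok's $\fbar_n$-clustering criterion for every finite Borel partition. Zero entropy follows immediately from Theorem~\ref{lsc}, since each $\mu_n$ is a periodic measure and therefore has zero entropy. Moreover, because $\mu$ is aperiodic and $\mu_n\to\mu$ in the weak$^*$ topology by Theorem~\ref{*sl}, the primitive periods $p_n$ of the orbits supporting $\mu_n$ must tend to infinity; otherwise a subsequence of $(\mu_n)$ would be supported on orbits of bounded cardinality, and any weak$^*$ accumulation point of such measures would be atomic.

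Fix $\mathcal{P}\in\bP^k(X)$ and $\eps>0$. Let $\und z$ be a $\mu$-generic quasi-orbit and $\und{x}^{(n)}_T$ orbits of $\mu_n$-generic periodic points with $\Fbar(\und z,\und{x}^{(n)}_T)\to 0$. Pick $\delta>0$ (to be coordinated with $\eps$ later) and apply Lemma~\ref{lem:repartition} to obtain a faithful partition $\mathcal{R}\in\bP^{k+1}(X)$ with $d_1^\mu(\mathcal{P},\mathcal{R})<\delta$; for $n$ beyond a threshold depending on $\delta$, the codings $\omega=\mathcal{R}(\und z)$ and $\omega_n=\mathcal{R}(\und{x}^{(n)}_T)$ satisfy $\fbar(\omega,\omega_n)<2\delta$. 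By Lemma~\ref{lem:generic} they are generic for an ergodic shift-invariant measure $\nu=\mu_\mathcal{R}$ and a periodic shift-invariant measure $\nu_n=(\mu_n)_\mathcal{R}$, respectively. The crucial observation is that Katok's criterion holds automatically for each $\nu_n$: its $N$-dimensional marginal $\nu_n^N$ is uniform on (at most) $p_n$ cyclic shifts of a single length-$N$ word, and matching two such shifts by $r<p_n$ positions yields an explicit order-preserving edit match of size $N-r$, so the $\fbar_N$-diameter of $\supp(\nu_n^N)$ is bounded by $p_n/N$.

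To push this cluster to $\nu$, fix an auxiliary $\theta>0$ with $2\sqrt\theta<\eps/4$ and choose $\delta$ so small that Lemma~\ref{lem:fbar-bound} applied to the bound $\fbar(\omega,\omega_n)<2\delta$ forces $\fbar(\nu,\nu_n)<\theta/2$ for $n$ large. Then select $N$ with $\fbar_N(\nu,\nu_n)<\theta$ and $p_n/N<\eps/4$, which is possible since $\fbar=\limsup_N\fbar_N$ and $p_n$ is now fixed. The joining definition of $\fbar_N$ on measures supplies $\lambda_N\in J_N(\nu,\nu_n)$ with $\int\fbar_N\,d\lambda_N<\theta$; Markov's inequality yields a set of $\lambda_N$-mass at least $1-\sqrt\theta$ on which $\fbar_N(u,v)<\sqrt\theta$, and its first-coordinate projection $E_N$ satisfies $\nu(E_N)>1-\sqrt\theta$ with every $u\in E_N$ lying $\fbar_N$-within $\sqrt\theta$ of some $v\in\supp(\nu_n^N)$. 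Two applications of the triangle inequality together with the $p_n/N$-bound on the internal diameter of $\supp(\nu_n^N)$ give $\fbar_N(u,u')<2\sqrt\theta+p_n/N<\eps/2$ for all $u,u'\in E_N$, which is exactly Katok's criterion for the partition $\mathcal{R}$ with parameter $\eps/2$. An entirely analogous joining/Markov argument, this time using the natural coupling of $\mu_\mathcal{P}$ and $\mu_\mathcal{R}$ (whose integrated $\fbar_N$-value is at most $d_1^\mu(\mathcal{P},\mathcal{R})<\delta$), upgrades the cluster $E_N$ to a cluster $A_N\subset\mathcal{P}^N$ with $\mu(A_N)>1-\eps$ and $\fbar_N$-diameter below $\eps$, provided $\delta$ was chosen sufficiently small at the outset. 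The main obstacle is precisely this parameter coordination in the joining/Markov step: the condition $p_n\to\infty$ is essential, because it is what allows the trivial $p_n/N$-clustering of the periodic approximants to be made infinitesimally tight while remaining within $\fbar$-reach of the limit measure $\nu$.
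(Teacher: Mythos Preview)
Your argument is correct and reaches the goal by a route that differs from the paper's in two places worth noting.

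First, the paper never invokes Theorem~\ref{lsc}; instead it verifies Katok's $M$-triviality criterion (Theorem~\ref{criterion}) for every partition, which by Katok's theorem already forces zero entropy in the aperiodic case. You instead check the paper's \emph{definition} of loosely Kronecker directly: zero entropy via Theorem~\ref{lsc}, plus the single-$n$ clustering condition. Both are legitimate; yours is self-contained against the paper's definition, theirs yields the formally stronger conclusion that the clustering holds for \emph{all} sufficiently large $n$.

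Second, and more substantively, the two proofs diverge after the common reduction via Lemma~\ref{lem:repartition}, Lemma~\ref{lem:generic}, and Lemma~\ref{lem:fbar-bound} to $\fbar(\mu_\mathcal{R},(\mu_n)_\mathcal{R})$ small. The paper then passes through Lemma~\ref{lem:existence} (an ergodic joining realising $\fbar$ pointwise) and Fatou to bound $\int_X \fbar_m(\mathcal{R}^m(z),\mathcal{R}^m(x^{(n)}))\,d\mu(z)$, landing on Katok's integral criterion with the periodic coding as the single reference word; the transfer from $\mathcal{R}$ to $\mathcal{P}$ is handled by Lemma~\ref{lem:Egorov}. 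Your route avoids both auxiliary lemmas: you work directly with a finite-$N$ coupling $\lambda_N\in J_N(\nu,\nu_n)$ from the definition \eqref{eq:fbar-for-measures}, apply Markov to extract a large cluster $E_N$, and exploit the elementary bound $\fbar_N$-$\mathrm{diam}(\supp\nu_n^N)\le p_n/N$ (this is where your observation $p_n\to\infty$ is essential and is a genuine ingredient absent from the paper's argument). The transfer to $\mathcal{P}$ via the diagonal coupling $x\mapsto(\mathcal{P}^N(x),\mathcal{R}^N(x))$ and a second Markov step is cleaner than the Egorov route, using only that $\int\fbar_N\le\int\dbar_N=d_1^\mu(\mathcal{P},\mathcal{R})$ by $T$-invariance of $\mu$.

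Your approach is thus more elementary in that it bypasses Lemmas~\ref{lem:existence} and~\ref{lem:Egorov}, at the cost of delivering only the single-$n$ form of the clustering; the paper's approach gives $M$-triviality outright but leans on those two lemmas. The parameter bookkeeping you flag at the end (choose $\theta$ from $\eps$, then $\delta$ from $\theta$ via Lemma~\ref{lem:fbar-bound} and small enough that $2\sqrt{\delta}<\eps/4$, then $\mathcal{R},\gamma$ from Lemma~\ref{lem:repartition}, then $n$, then $N$) is consistent and closes correctly.
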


We present the proof at the end of this section. Before that we recall Katok's criterion for standardness (loosely Kronecker) and formulate two technical lemmas we will need for the proof.

\subsection{Katok's criterion} Let $\XX=(X,\mathscr X, \mu, T)$ be a measure-preserving system and let $\mathcal P=\{P_0,P_1,\ldots, P_{k-1}\}\in\bP^k(X)$. Following Katok \cite[Definition 9.1]{Katok} we say that the process $(\XX, \mathcal P)$ is \emph{$(n,\eps)$-trivial} if there exists a word $\omega\in\{0,1,\ldots, k-1\}^n$ such that $\mu_{\cP}(B_{\eps}[\omega])\geq 1-\eps$, where $B_{\eps}[\omega]=\big\{\omega'\in\{0,1,\ldots, k-1\}^n\,:\,\fbar_{n}(\omega, \omega')<\eps\big\}$. 
For every $n\ge 1$, $\eps>0$ and $\omega\in\{0,1\ldots,k-1\}^n$ we clearly have
\[
(1-\mu_{\cP}(B_{\eps}[\omega]))\eps\le 
\int_X \fbar_n(\cP^n(x),\omega)\text{ d}\mu(x)\le 
(1-\mu_{\cP}(B_{\eps}[\omega]))+\eps.
\]
Therefore, (cf. \cite[Lemma 9.1]{Katok}) 
if $\omega\in\{0,1,\ldots,k-1\}^n$ and $\beta>0$ are such that
\[
\int_X \fbar_n({\cP}^n(x),\omega)\,\text{d}\mu(x)=\int_{\Omega_k} \fbar_n(u_0u_1\ldots u_{n-1},\omega)\,\text{d}\mu_\mathcal{P}(u)<\beta,
\]
then the process $(\XX, \mathcal P)$ is $(n, \sqrt\beta)$-trivial. Conversely, if the process $(\XX, \mathcal P)$ is $(n,\eps/2)$-trivial, then  
there is $\omega^{(n)} \in\mathcal{P}^n$ such that 
\[\int_X \fbar_n(\cP(x),
\omega^{(n)})\text{ d}\mu(x)=\int_{\Omega_k} \fbar_n(u_0u_1\ldots u_{n-1},\omega^{(n)})\,\text{d}\mu_\mathcal{P}(u)<\eps.\]


A process $(\XX,\cP)$ is \emph{$M$-trivial} \cite[Definition 9.2]{Katok} if for any $\eps>0$ there exists $N=N(\eps)$ such that for every $n\geq N$ the process $(\XX,\mathcal P)$ is $(n,\eps)$-trivial.

\begin{thm}[\cite{Katok}, Theorem 4, (1)$\Leftrightarrow$(2)]\label{criterion}
An aperiodic measure-preserving system $\XX$ is loosely Kronecker if and only if for every finite partition $\mathcal{P}$ of $X$ the process $(\XX,\mathcal P)$ is $M$-trivial.
\end{thm}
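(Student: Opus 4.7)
My plan is to prove the two implications separately. In the forward direction I propagate the loose Kronecker property at the single scale $n_0$ provided by its definition to every sufficiently large scale $n$, using a block-decomposition argument and concatenation of $\fbar$-matches. In the reverse direction I separately verify from $M$-triviality the two defining features of loose Kronecker: the good-set condition on atoms of $\mathcal{P}^n$ (essentially immediate from the definitions and the triangle inequality for $\fbar_n$) and zero entropy (via a combinatorial bound on the size of an $\fbar_n$-ball).

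For the forward direction I would fix $\mathcal{P}\in\bP^k(X)$ and $\eps>0$ and apply the loose Kronecker condition with parameter $\eps':=\eps/5$ to obtain $n_0$ and a set of atoms $A_{n_0}\subset\mathcal{P}^{n_0}$ with $\mu(A_{n_0})>1-\eps'$ and $\fbar_{n_0}(u,v)<\eps'$ for $u,v\in A_{n_0}$. After fixing a reference word $\omega_0\in A_{n_0}$, for each $n\geq n_0$ I write $n=qn_0+r$ with $0\leq r<n_0$ and take $\omega_n$ to be the concatenation of $q$ copies of $\omega_0$ followed by the length-$r$ prefix of $\omega_0$. The pointwise ergodic theorem for $T^{n_0}$ applied to the indicator of $\{x:\mathcal{P}^{n_0}(x)\in A_{n_0}\}$ shows that for $\mu$-a.e.\ $x$ the proportion of blocks $j<q$ with $\mathcal{P}^{n_0}(T^{jn_0}x)\in A_{n_0}$ converges to $\mu(A_{n_0})>1-\eps'$; hence for all large $q$ and for $x$ in a set of $\mu$-measure at least $1-\eps$ more than $1-2\eps'$ of the blocks are good. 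The per-block $(1-\eps')n_0$-match that witnesses $\fbar_{n_0}(\mathcal{P}^{n_0}(T^{jn_0}x),\omega_0)<\eps'$ on each good block can then be glued into a single order-preserving match of $\mathcal{P}^n(x)$ with $\omega_n$ of global size at least $(1-3\eps')n$, so $\fbar_n(\mathcal{P}^n(x),\omega_n)<4\eps'<\eps$ once $n$ is large enough to absorb the tail of length $r$. Thus $\mu_\mathcal{P}(B_\eps[\omega_n])\geq 1-\eps$ for every sufficiently large $n$, i.e.\ $(\XX,\mathcal{P})$ is $M$-trivial.

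For the reverse direction I fix $\mathcal{P}\in\bP^k(X)$ and $\eps>0$. Applying $(n,\eps/2)$-triviality yields $\omega\in\{0,\ldots,k-1\}^n$ with $\mu_\mathcal{P}(B_{\eps/2}[\omega])>1-\eps$; I take $A_n:=B_{\eps/2}[\omega]$, regarded as atoms of $\mathcal{P}^n$, and the triangle inequality for $\fbar_n$ then delivers $\fbar_n(u,v)<\eps$ on $A_n$. To establish $h(\mu)=0$ I would bound $|B_\eps[\omega]|$: a word $u\in B_\eps[\omega]$ is specified by an order-preserving matching $\pi$ with $|\pi|>(1-\eps)n$ together with the letters of $u$ at positions outside $\Dom(\pi)$, so that $|B_\eps[\omega]|\leq\binom{n}{\lceil\eps n\rceil}^2 k^{\lceil\eps n\rceil}$. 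Splitting $H_\mu(\mathcal{P}^n)$ according to whether an atom of $\mathcal{P}^n$ lies in $B_\eps[\omega]$, I then obtain $H_\mu(\mathcal{P}^n)\leq\log 2+\log|B_\eps[\omega]|+\eps n\log k$, which by Stirling gives $h(\mu,\mathcal{P})\leq 2H(\eps)+2\eps\log k$ with $H(\eps)=-\eps\log\eps-(1-\eps)\log(1-\eps)$. Letting $\eps\to 0$ shows $h(\mu,\mathcal{P})=0$ for every $\mathcal{P}$, hence $h(\mu)=0$, and combined with the good-set condition this proves $\XX$ is loosely Kronecker.

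The main obstacle is the bookkeeping in the forward direction: one must glue the per-block $\fbar_{n_0}$-matches into a single order-preserving global match while tracking losses both from the bad blocks (at most $2\eps'q$ of them) and from the tail of length $r<n_0$, so that the cumulative $\fbar_n$-error is controlled by $\eps$ uniformly over all sufficiently large $n$. The counting argument in the reverse direction is conceptually routine but one must remember to select both $\Dom(\pi)$ and $\Ran(\pi)$ when bounding $|B_\eps[\omega]|$.
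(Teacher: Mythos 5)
The paper does not prove this statement at all --- it is quoted from Katok (Theorem 4, (1)$\Leftrightarrow$(2)) --- so there is no internal proof to compare with; what you give is a direct argument for the equivalence as it reads with the paper's working definition of loosely Kronecker (zero entropy plus the good-set condition on atoms of $\mathcal{P}^n$). That is a legitimate exercise, but note it does not recover the deeper content of Katok's theorem, namely the equivalence of $M$-triviality with standardness, i.e.\ Kakutani equivalence to an aperiodic group rotation, which is also where the aperiodicity hypothesis (unused in your argument) genuinely enters. Your reverse direction is sound: taking $A_n=B_{\eps/2}[\omega]$ and using the triangle inequality for $\fbar_n$ (valid because common subsequences of $u$ and of $w$ with $\omega$, each of density $1-\eps/2$ in the positions of $\omega$, overlap in a set of density $1-\eps$ and hence produce a common subsequence of $u$ and $w$), and the counting bound $|B_\eps[\omega]|\le\binom{n}{\lceil\eps n\rceil}^2k^{\lceil\eps n\rceil}$ combined with the usual splitting of $H_\mu(\mathcal{P}^n)$ does give $h(\mu,\mathcal{P})\le 2H(\eps)+2\eps\log k$ for every $\mathcal{P}$, hence $h(\mu)=0$. (Incidentally, together with your forward direction this shows the zero-entropy clause in the paper's definition is redundant.)

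The one step that fails as written is in the forward direction: the pointwise ergodic theorem for $T^{n_0}$ does not give convergence of the block frequencies to the constant $\mu(A_{n_0})$, because $T^{n_0}$ need not be ergodic when $T$ is (for instance when the system has an eigenvalue which is an $n_0$-th root of unity); the limit is only the conditional expectation of the indicator with respect to the $T^{n_0}$-invariant $\sigma$-algebra. The repair is routine: that limit function has integral $\mu(A_{n_0})>1-\eps'$, so by Chebyshev it exceeds $1-\sqrt{\eps'}$ outside a set of measure at most $\sqrt{\eps'}$ (alternatively, average over the $n_0$ possible block offsets), and Egorov then gives the uniformity in $q$ that you implicitly use when asserting a single threshold for all large $n$. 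With this fix your constants must be retuned ($\eps'=\eps/5$ does not suffice; something like $\eps'=(\eps/10)^2$ does), but the block-gluing itself is correct: each per-block match lives inside its own interval of length $n_0$ in both words, so their union is order preserving, and the losses from bad blocks, from per-block mismatches, and from the tail $r<n_0$ are all of order $\sqrt{\eps'}\,n$ once $n$ is large. So the architecture of both directions is right; only the appeal to ergodicity of $T^{n_0}$ and the attendant bookkeeping need correcting.
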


\subsection{Proof of Theorem \ref{thm:looselyK}} 
In the proof of our main theorem we will need the following fact. 

\begin{lem}\label{lem:Egorov} Let $\mu\in\MTe(X)$ and $\eps>0$. If    $\mathcal{P},\mathcal{R}\in\bP^k(X)$ are such that $d^\mu_1(\mathcal{P},\mathcal{R})<\eps/3$, then there is  $N\in\N$ such that for every 
$n\ge N$ we have
\[
\int_X \fbar_n(\cP^n(x),\cR^n(x))\text{ d}\mu(x)<
\eps.
\]
\end{lem}

\begin{proof} Let $\mathcal{P}=\{P_0,P_1,\ldots,P_{k-1}\}$, $\mathcal{R}=\{R_0,R_1,\ldots,R_{k-1}\}$ and $\Delta=(P_0 \div R_0)\cup(P_1 \div R_1)\cup\ldots\cup(P_{k-1} \div R_{k-1})$. Observe that the ergodicity of $\mu$ implies that for $\mu$-a.e. $x\in X$ we have
\[
\dbar_n(\cP^n(x),\cR^n(x))=\frac{1}{n}\sum_{j=0}^{n-1}\chi_\Delta(T^j(x))\stackrel{(n\to\infty)}{\longrightarrow} \mu(\Delta)=
\mu(\{x\in X:\mathcal{P}(x)\neq\mathcal{Q}(x)\})=d^\mu_1(\mathcal{P},\mathcal{Q}).
\]
It follows from the Egorov theorem that there are a measurable set $G\subset X$ and $N\in\N$ such that $\mu(G)>1-\eps/3$ and for all $n\geq N$ and $x\in G$ one has $\dbar_n(\cP^n(x),\cR^n(x))<d^\mu_1(\cP,\cR)+\eps/3$.
Notice that $\fbar_n(\cP^n(x),\cR^n(x))\le \dbar_n(\cP^n(x),\cR^n(x))$ for all $n\in\N$.
Therefore, if $d^\mu_1(\cP,\cR)<\eps/3$ and $n\ge N$, then
\begin{multline*}
\int_X \fbar_n(\cP^n(x),\cR^n(x))\text{ d}\mu(x)\le \int_X \dbar_n(\cP^n(x),\cR^n(x))\text{ d}\mu(x)\le \\\le \int_G  \dbar_n(\cP^n(x),\cR^n(x))\text{ d}\mu(x) +\mu(X\setminus G)\le d^\mu_1(\cP,\cR)+\eps/3+\eps/3  \le\eps. \qedhere
\end{multline*}
\end{proof}

The rest of this section is devoted to the proof of Theorem \ref{thm:looselyK}.
\begin{proof}[Proof of Theorem \ref{thm:looselyK}]
Let $\mu=\mu^{(0)}$ be an $\Fbar$-limit of a sequence of 
loosely Kronecker measures $\mu^{(1)},\mu^{(2)},\ldots$. Then there is a sequence $(x^{(n)})_{n=1}^\infty\subseteq X$ and  a generic quasi-orbit $\und x$ for $\mu^{(0)}$ such that  $\Fbar(\und{x},\und{x}^{(n)}_T)\to 0$ as $n\to\infty$ and $x^{(n)}$ is a generic point for  $\mu^{(n)}$ for $n\in \N$. 
Then $\mu^{(0)}$ is ergodic. If $\mu^{(0)}$ is a periodic measure, then there is nothing to show. Assume that $\mu^{(0)}$ is aperiodic.

To apply Katok's criterion (Theorem \ref{criterion}) we need to show that for every finite partition $\mathcal P$ of $X$ the process $(\XX,\mathcal P)$, where $\XX=(X,X_{\mathscr{B}},\mu^{(0)},T)$ is $M$-trivial. To this end we choose a partition $\mathcal{P}\in\bP^k(X)$ and fix $\eps>0$.

We use Lemma \ref{lem:fbar-bound} to find $0<\alpha<\eps/6$ such that $\fbar(\und{\omega},\und{\omega}')<\alpha$ for some $\und{\omega}\in\Gen(\xi)$, $\und{\omega}'\in\Gen(\xi')$, where $\xi$ and $\xi'$ are shift invariant ergodic measures on $\Omega_{k+1}$ implies that $\fbar(\xi,\xi')<\eps/3$.

We take $\delta<\min\{\alpha/2,\eps/9\}$. 
We apply Lemma \ref{lem:repartition} to $(\mu^{(n)})_{n=0}^\infty$ to find $\gamma=\gamma(\mu=\mu^{(0)},\delta,\mathcal{P})$ and a partition $\mathcal{R}\in\bP^{k+1}(X)$ with $d_1^{\mu^{(0)}}(\cP,\cR)<\delta$ and $\mu^{(n)}(\partial\mathcal R)=0$ for $n=0,1,\ldots$.  Let $N$ 
be such that $\Fbar(\und x,x^{(N)}_T)<\min\{\gamma,\alpha/2\}$. 
It follows from Lemma \ref{lem:repartition} that 
\begin{equation}\label{eq:fbar-close-points}
\fbar(\mathcal{R}(\und{x}),\mathcal{R}(\und{x}^{(N)}_T))<\alpha<\eps/6.
  \end{equation}

By Lemma \ref{lem:generic}, 
the point $\mathcal{R}(\und{x}^{(N)}_T)$ is generic for $\mathcal{R}$-representation measure of $\mu^{(N)}$ denoted $\cR_*\mu^{(N)}=\mu^{(N)}_\mathcal{R}$ and $\mathcal{R}(\und x)$ is generic for the $\mathcal{R}$-representation $\mu^{(0)}_\mathcal{R}=\cR_*\mu^{(0)}$ of $\mu^{(0)}$. The inequality \eqref{eq:fbar-close-points} and our choice of $\alpha$ imply that 
$\fbar(\mu^{(0)}_\mathcal{R},\mu_\mathcal{R}^{(N)})<\eps/3$. Hence, by the definition of $\fbar$ metric, we find $M'=M'(\eps)>0$ satisfying that for every $m\ge M'$ there is a coupling $\lambda_{m}$ of $\cR_*\mu|_{\cR^m}$ and $\cR_*\mu^{(N)}|_{\cR^m}$ for which we have
\begin{equation}\label{ineq:coupling}
\int_{\cR^m\times\cR^m}\fbar_m(\zeta,\zeta')\text{ d}\lambda_{m}(\zeta,\zeta')<\eps/3.    
\end{equation}

Using that $\mu^{(N)}$ is loosely Kronecker or periodic  
we find $M''=M''(\eps)$ such that for every $m\ge M''$ there is $\omega^{(m)}\in\cR^m$ satisfying
\begin{equation}\label{ineq:LK}
\int_X \fbar_m(\cR^m(x),\omega^{(m)})\text{ d}\mu^{(N)}(x)<\eps/3.    
\end{equation}

With the above notation, 
let $m\ge \max\{M',M''\}$. Using \eqref{ineq:coupling} and \eqref{ineq:LK} we get (below, $\cR^{2m}=\cR^m\times\cR^m$)
\begin{align}
\begin{split}\label{ineq:final}
\int_X \fbar_m(\cR^m(x),\omega^{(m)})\text{ d}\mu^{(0)}(x)&=
\int_{\cR^{2m}}
\fbar_m(\zeta,\omega^{(m)})\text{ d}\lambda_m(\zeta,\zeta')\\
&\le \int_{\cR^{2m}}
(\fbar_m(\zeta,\zeta')
+\fbar_m(\zeta',
\omega^{(m)}))\text{ d}\lambda_m(\zeta,\zeta')\\
&=\int_{\cR^{2m}}
\fbar_m(\zeta,\zeta')\text{ d}\lambda_m(\zeta,\zeta')
+\int_{X} \fbar_m(\cR^m(y),
\omega^{(m)})\text{ d}\mu^{(N)}(y)\\
&\le 2\eps/3.    
\end{split}
\end{align}

By Lemma \ref{lem:Egorov}, 
since $\cR$ is a partition satisfying $d^{\mu^{(0)}}_1(\cP,\cR)<\delta<\eps/9$, there exists $M\in\N$ 
such that for every 
$m\ge M$ we have
\begin{equation}\label{ineq:Egorov}
\int_X \fbar_m(\cP^m(x),\cR^m(x))\text{ d}\mu^{(0)}(x)<\eps/3.    
\end{equation}
Combining \eqref{ineq:LK} with \eqref{ineq:final} we see that 
if $m\ge N=\max\{M,M',M''\}$, then
\[
\int_X \fbar_m(\cP^m(x),\omega^{(m)})\text{ d}\mu^{(0)}(x)<\eps.
\]
Since $\eps>0$ was arbitrary, we have proved that $(\XX,\mathcal{P})$ is $M$-trivial. Since $\mathcal{P}$ was also arbitrary and $\mu^{(0)}$ is aperiodic we conclude using Theorem \ref{criterion} that $\mu^{(0)}$ is loosely Kronecker (standard).
\end{proof}

As a consequence of Theorem~\ref{thm:GIKNmain}, Theorem~\ref{thm:Cauchy}, and Theorem~\ref{lsc} (combined with Theorem \ref{thm:BDG} for the second part), we get the following result, which we still call a theorem due to its importance.

\begin{thm}\label{thm:GIKN-improved}
If $\mu$ is an aperiodic royal measure resulting from a GIKN-sequence $(\Gamma_n)_{n\in\N}$, then $\mu$ is a loosely Kronecker measure (hence, ergodic and with zero entropy). 
In particular, under the assumptions of Theorem \ref{thm:BDG} the resulting measure $\mu$ is either a nonhyperbolic periodic measure or a nonhyperbolic loosely Kronecker measure (hence, $\mu$ is ergodic and has zero entropy).
In all cases above, $\mu$ is supported on
\[
\supp\mu=\bigcap_{k=1}^\infty\overline{\bigcup_{n\ge k}\Gamma_n}.
\]
\end{thm}



Theorem \ref{thm:GIKN-improved} 
may replace \cite[Lemma 2.5]{BDG} and it reveals more information about the resulting measure. This applies for example to \cite[Theorem B]{CCGWY} or \cite[Proposition 1.1]{CCGWY}. Similar strengthenings are possible for results from \cite{BC,BBD, BDG, BZ, DG, DGRZ, GIKN, KN}. We leave the details to the reader, since presenting them here would require repeating a lot of material from these papers without introducing anything new.

\bibliographystyle{plain}
\bibliography{bib}

\end{document}